\newtheorem{theorem}{Theorem}
\newtheorem*{theorem*}{Theorem}
\newtheorem{observation}{Observation}
\newtheorem{proposition}{Proposition}
\newtheorem{corollary}{Corollary}
\newtheorem{lemma}{Lemma}
\theoremstyle{remark}
\theoremstyle{definition}
\newcommand{\A}{\mathcal{A}}
\renewcommand{\H}{\mathcal{H}}
\newcommand{\T}{\mathcal{T}}
\newcommand{\C}{\mathcal{C}}
\renewcommand{\T}{\mathcal{T}}
\title[A Web of Confocal Parabolas in a Grid of Hexagons]{A Web of Confocal Parabolas\\in a Grid of Hexagons\vspace{-0.75em}}
\author{Peter Moses}
\thanks{P. Moses, Moparmatic Inc., Worcestershire, England, \texttt{moparmatic@gmail.com}}
\author{Dan Reznik$^*$} 
\thanks{D. Reznik, Data Science Consulting Ltd., Rio de Janeiro, Brazil.
\texttt{dreznik@gmail.com}}
\date{April, 2022}
\begin{document}

\maketitle

\vspace{-1cm}
\begin{abstract}
If one erects regular hexagons upon the sides of a triangle $T$, several surprising properties emerge, including: (i) the triangles which flank said hexagons have an isodynamic point common with $T$, (ii) the construction can be extended iteratively, forming an infinite grid of regular hexagons and flank triangles, (iii) a web of confocal parabolas with only three distinct foci interweaves the vertices of hexagons in the grid. Finally, (iv) said foci are the vertices of an equilateral triangle.
\vskip .3cm
\noindent\textbf{Keywords} hexagon, flank, map, isodynamic, parabola, confocal.
\vskip .3cm
\noindent \textbf{MSC} {51M04
\and 51N20 \and 51N35\and 68T20}
\end{abstract}

\section{Introduction}
Napoleon's theorem, studied in \cite{martini1996}, is illustrated in \cref{fig:napoleon}(left): the centroids of the 3 equilaterals erected  upon the sides of a reference triangle $T=ABC$ are vertices of an equilateral known as the ``outer'' Napoleon triangle \cite{mw}. A related construction due to Lamoen appears in \cref{fig:napoleon}(right), whereby squares are erected upon the sides of $\T$ and triangles which ``flank'' said squares are defined \cite{lamoen2004-flank}. 

\begin{figure}
    \centering
    \includegraphics[trim=50 200 100 20,clip,width=\textwidth,frame]{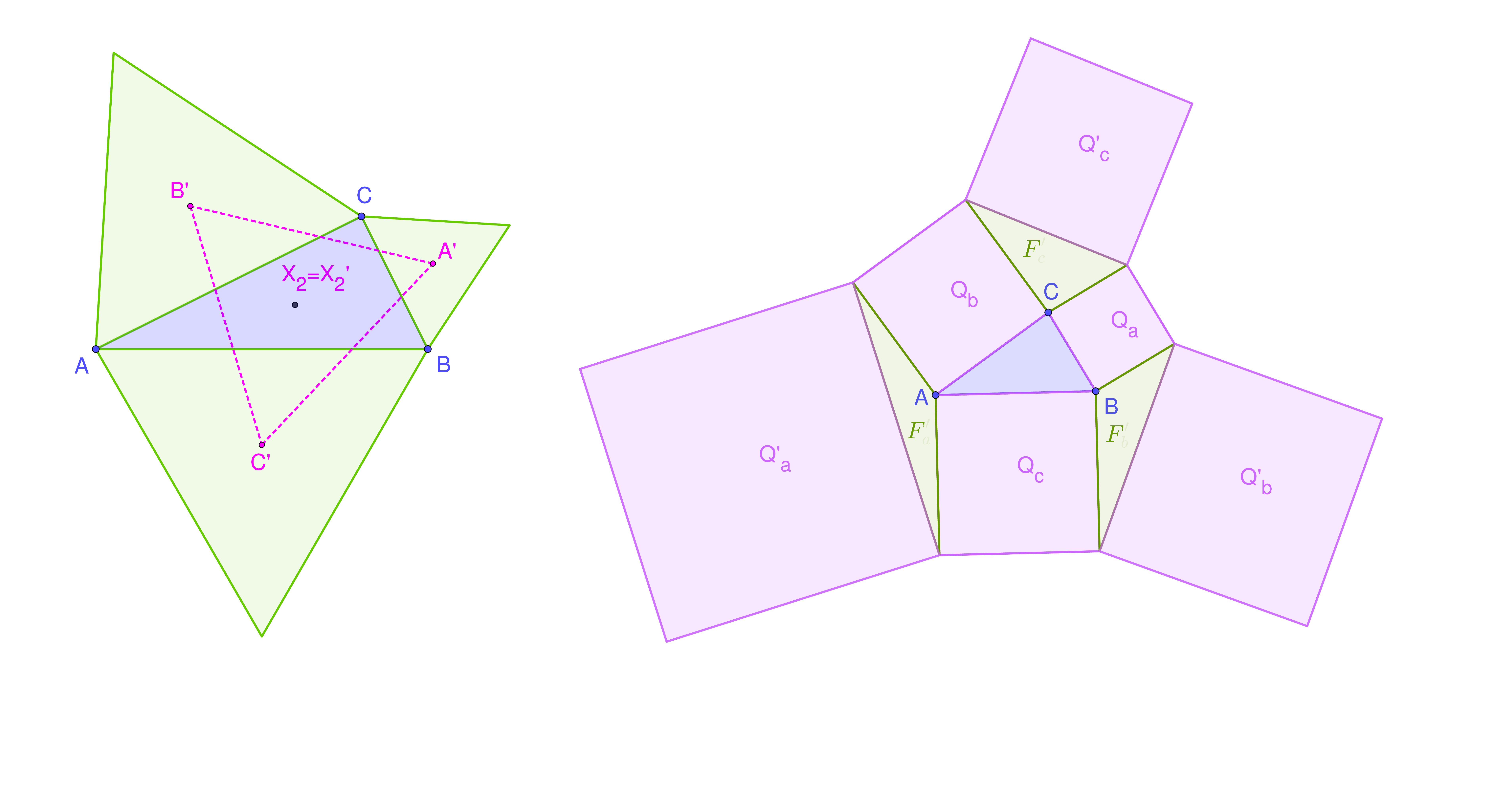}
    \caption{\textbf{Left:} Napoleon's theorem states that a triangle $\T'=A'B'C'$ (dashed magenta) whose vertices are the centroids of 3 equilaterals (green) erected upon the sides of a triangle $\T=ABC$ (blue) is itself equilateral. Furthermore, its centroid coincides with the barycenter $X_2$ of $\T$ \cite[Napoleon's Thm]{mw}. \textbf{Right:} Lamoen \cite{lamoen2004-flank} has studied properties of ``flank'' triangles $F$ defined between neighboring squares $Q_a,Q_b,Q_c$ erected upon the sides of a $\T=ABC$, as well as those of additional squares $Q_a',Q_b',Q_c'$ erected upon the flanks.}
    \label{fig:napoleon}
\end{figure}

We borrow from those ideas and study a construction whereby regular {\em hexagons} $\H_a,\H_b,\H_c$ are erected upon the sides of $\T$, with 3 ``flank'' triangles $F_a,F_b,F_b$ defined between them, see  \cref{fig:basic}. As it turns out, many surprising properties emerge, listed below.

\begin{figure}
    \centering
    \includegraphics[trim=300 50 600 50,clip,width=.8\textwidth]{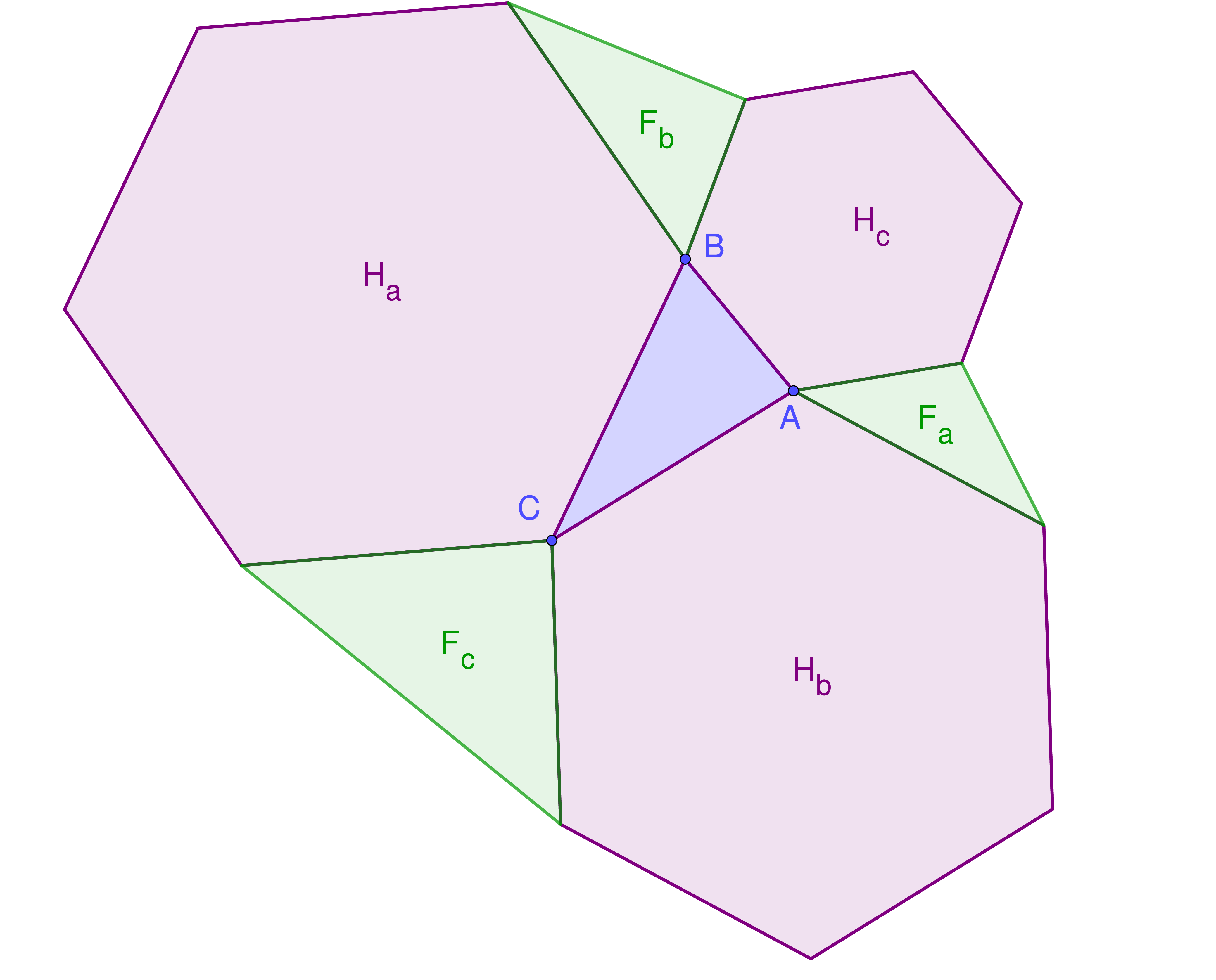}
    \caption{Given a reference triangle $\T=ABC$, erect three regular hexagons $\H_a,\H_b,\H_c$ on its sides; these define three flank triangles with a vertex at $A$, $B$, or $C$ (respectively), and two others which are proximal vertices of erected hexagons sharing $A$, $B$, or $C$, respectively.}
    \label{fig:basic}
\end{figure}

\subsection*{Summary of the Results}

\begin{itemize}
    \item The second isodynamic points \cite{mw} of $\T$ and the flanks are common;
    \item A contiguous, infinite grid of regular hexagons can be constructed, see \cref{fig:grid}; all flanks in the grid have a common second isodynamic point and conserve a special quantity;
    \item Sequences of hexagon vertices are crisscrossed by a web of confocal parabolas with only 3 distinct foci;
    \item Their foci and directrix intersections are vertices of 2 new equilateral triangles;
    \item Phenomena are described when the reference triangle in \cref{fig:basic} are triangles in two special Poncelet triangle families (for a survey on Poncelet's porism, see \cite{dragovic2014}).
\end{itemize}

\subsection*{Related Work}

In \cite{cerin2002-flanks,lamoen2004-flank} properties of ``flank'' triangles located between squares and/or rectangles erected on a triangle's sides are studied. Works \cite{dosa2007-ext,hoehn2001-ext} study triangles centers (taken as triples or not) analogues of the intouch and/or extouch triangles erected upon each side of a reference triangle. In \cite{fukuta1996,fukuta1997,stachel2002,cerin1998} a construction related to Napoleon's theorem is described which associates to a generic triangle a regular hexagon. A study of properties and invariants of regular polygons erected upon the homothetic Poncelet family appears in \cite{fedor2022}.

\subsection*{Article Structure}

In \cref{sec:trio} we describe properties of the trio of flank triangles. In \cref{sec:satellite} we fix a central regular hexagon and consider properties of 6 ``satellite'' triangles built around it, and this implies a contiguous grid can be iteratively built. . In \cref{sec:parabolas} we show said grid is interwoven by three groups of confocal parabolas. In \cref{sec:poncelet} we show two examples of grids controlled by Poncelet poristic triangles.  \cref{sec:videos} provides a table of videos illustrating some results as well as a list of open questions. In \cref{app:eqns} we provide computer-usable expressions for some objects described in \cref{sec:parabolas}.

\section{Hexagonal flanks}
\label{sec:trio}
Adhering to Kimberling's $X_k$ notation for triangle centers \cite{etc}, recall a triangle's {\em isodynamic points} $X_{15}$ and $X_{16}$ are the limiting points of a pencil\footnote{This pencil is known as the {\em Schoutte} pencil \cite{johnson17-schoutte}.} of circles containing a triangle's circumcircle and the Brocard circle \cite[Isodynamic Points]{mw}. For foundations and properties of Brocard geometry and/or the isodynamic points see \cite{casey1888,johnson1960}.

\begin{figure}
    \centering
    \includegraphics[trim=300 0 400 0,clip,width=\textwidth]{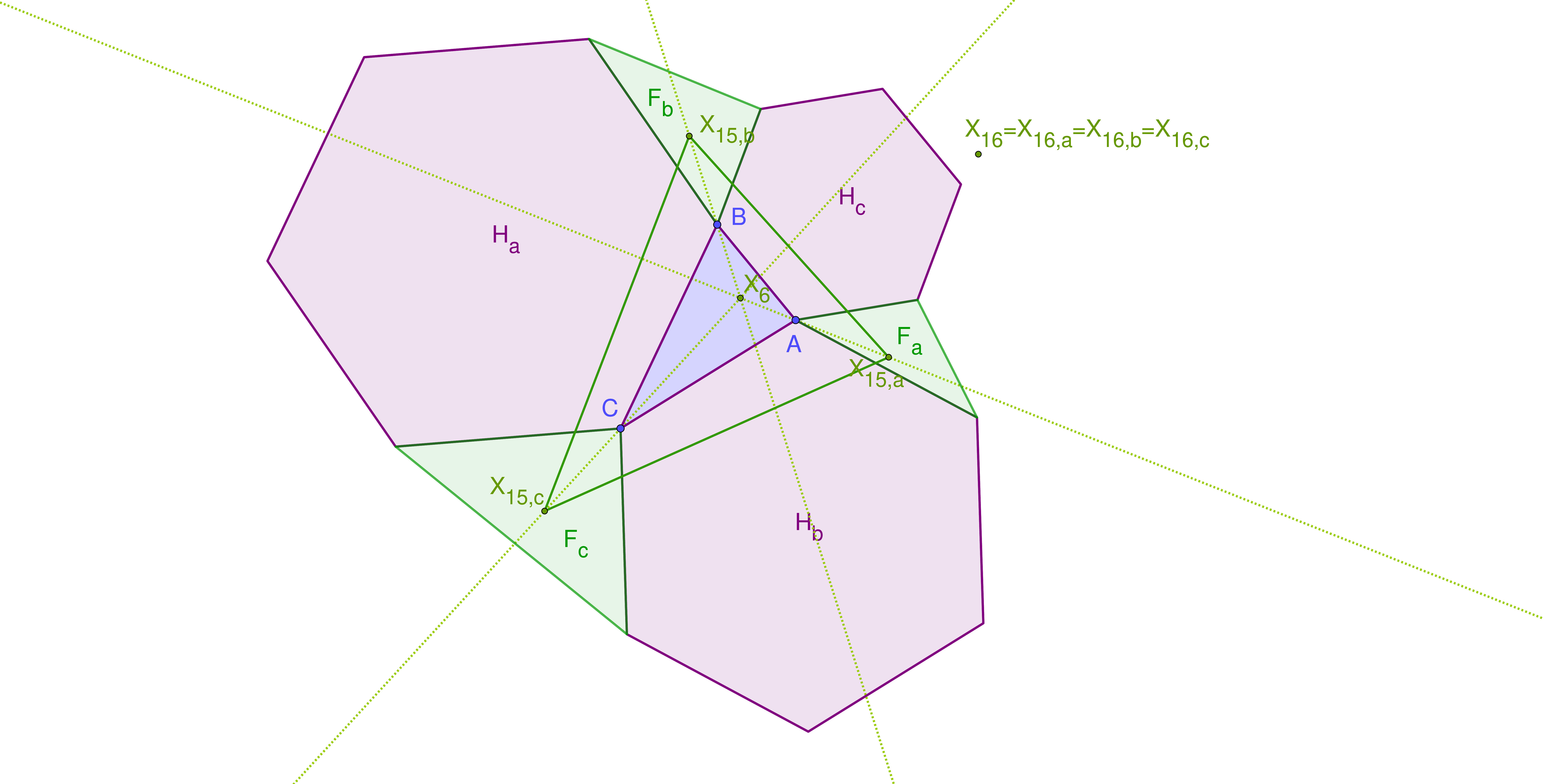}
    \caption{(i) the triangle formed by the 1st isodynamic points $X_{15,a}$, $X_{15,b}$, $X_{15,c}$ of the 3 flank triangles is perspective with $\T=ABC$ at the latter's $X_6$; (ii) the 2nd isodynamic point of $\T$ and the 3 flank triangles are common (upper right).}
    \label{fig:trio}
\end{figure}

Referring to \cref{fig:trio}:

\begin{proposition}
The triangle with vertices on the $X_{15}$-of-flanks is perspective with $\T$ at the latter's symmedian point $X_{6}$.
\label{prop:x6}
\end{proposition}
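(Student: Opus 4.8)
The plan is to prove the perspectivity by a direct computation which, thanks to the cyclic symmetry of the construction, reduces to checking a single incidence. Put the vertices of $\T=ABC$ at complex numbers $z_A,z_B,z_C$, positively oriented (the reversed orientation follows by conjugation), and write $u=e^{2\pi i/3}$. Tracking the two $120^\circ$ hexagon angles meeting at $A$, the hexagon erected outward on $AB$ contributes the vertex $z_A+\bar u(z_B-z_A)$ and the one on $CA$ contributes $z_A+u(z_C-z_A)$, so
\[
F_a=\bigl\{\,z_A,\ z_A+u(z_C-z_A),\ z_A+\bar u(z_B-z_A)\,\bigr\},
\]
with $F_b,F_c$ given by the cyclic shift $A\to B\to C\to A$. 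Thus $F_a$ has angle $\tfrac{2\pi}{3}-\angle A$ at $A$, its two sides there having the lengths $c=|AB|$ and $b=|CA|$ of $\T$, so its remaining side $p$ and its doubled area are elementary in $b,c,\angle A$.

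Next I would locate $X_{15,a}$ using the barycentric form of the first isodynamic point (equivalently, the isogonal conjugate of the first Fermat point):
\[
X_{15}=\bigl(\,p^2(S_p+\tfrac{S}{\sqrt3})\ :\ q^2(S_q+\tfrac{S}{\sqrt3})\ :\ r^2(S_r+\tfrac{S}{\sqrt3})\,\bigr),
\]
for a triangle with sides $p,q,r$, Conway symbols $S_p=\tfrac{q^2+r^2-p^2}{2}$, etc., and doubled area $S$. Substituting the data of $F_a$ and using only the angle-addition formulas, its barycentrics with respect to the three vertices above collapse to $\bigl(p^2\tfrac{2S}{\sqrt3}:c^2(b^2+w):b^2(c^2+w)\bigr)$, where $w:=S_A-\tfrac{S}{\sqrt3}$ and now $S_A=bc\cos A$, $S=bc\sin A$ refer to $\T$. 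Converting this affine combination of the flank vertices into barycentrics with respect to $\T$ is a linear step, since the relevant $B$- and $C$-coordinates are the real coefficients $y,z$ in $X_{15,a}-z_A=y(z_B-z_A)+z(z_C-z_A)$; comparing their ratio with $b^2:c^2$ (the condition for lying on the $A$-symmedian $AX_6$ of $\T$), the incidence reduces to $\tfrac{\sqrt3}{2}\,w=bc\,\sin(\angle A+120^\circ)$, i.e. to the identity $\cos(\angle A+30^\circ)=\sin(\angle A+120^\circ)$, which holds. By the cyclic symmetry $X_{15,b},X_{15,c}$ likewise lie on the $B$- and $C$-symmedians of $\T$, and since these concur at $X_6$ the lines $AX_{15,a},BX_{15,b},CX_{15,c}$ all pass through $X_6$.

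The computation itself is short, and the only genuine obstacle is keeping the conventions consistent in the setup: the choice $u$ versus $\bar u$ encodes ``erected outward,'' while the sign of the $\tfrac{S}{\sqrt3}$ term distinguishes $X_{15}$ from the second isodynamic point $X_{16}$ and must be matched to the orientation of $\T$ — a mismatch anywhere yields $X_{16}$, a different perspector, or no perspectivity at all. One could instead look for a synthetic argument via the characterization of $X_{15}$ as the point whose pedal triangle is equilateral (equivalently, the centre of an inversion sending $\{A,B,C\}$ to an equilateral triangle), but transporting this characterization across the three flanks and $\T$ seems awkward, so I would attempt the coordinate computation first.
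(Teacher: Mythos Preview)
Your plan is sound and the key computations check out: the flank vertices, the Conway-form barycentrics $(p^2\cdot 2S/\sqrt{3}:c^2(b^2+w):b^2(c^2+w))$ for $X_{15,a}$ relative to $F_a$, and the reduction of the incidence $y:z=b^2:c^2$ to the identity $\tfrac{\sqrt{3}}{2}w=bc\sin(A+120^\circ)$ are all correct. This is essentially the same approach as the paper --- a direct barycentric computation followed by the observation that the last two $\T$-coordinates are in ratio $b^2:c^2$ --- except that the paper simply asserts the final $\T$-barycentrics $(a^2-3b^2-3c^2:b^2:c^2)$ without intermediate steps, whereas you route the calculation through complex coordinates and an explicit coordinate change. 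Your version supplies the work the paper omits; the only cost is the extra bookkeeping in the conversion step, which you could shortcut by computing $X_{15,a}$ directly in $\T$-barycentrics from the start.
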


\begin{proof}
A straightforward derivation for the barycentrics of $X_{15}$ of the $A$-flank yields $(a^2-3 b^2-3 c^2,b^2,c^2)$, and those of the $B$- and $C$-flanks can be obtained cyclically. Recall from the barycentrics of $X_6$ are $(a^2,b^2,c^2)$ (see \cite{etc}), so the result follows.
\end{proof}

\begin{proposition}
The 2nd isodynamic points $X_{16}$ of the three flank triangles coincide with that of $\T$.
\end{proposition}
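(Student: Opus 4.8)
The plan is to use the classical description of the isodynamic points: $X_{15}$ and $X_{16}$ are exactly the two points $P$ at which an inversion carries $\T$ to an equilateral triangle — equivalently, the two points $P$ with $a\cdot PA = b\cdot PB = c\cdot PC$. Set $q := X_{16}(\T)$, so $q$ satisfies this metric relation for $\T$; the goal is to show it satisfies the analogous relation for the $A$-flank, which will make $q$ an isodynamic point of that flank.

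First I would record the $A$-flank as the triangle $A P_b P_c$ with $P_b = A + R_{120^\circ}(C-A)$ and $P_c = A + R_{-120^\circ}(B-A)$, where $R_\theta$ is planar rotation (the signs being fixed, once and for all, by an orientation of $\T$ together with the ``outward'' convention for the erected hexagons). Then $|AP_b| = b$ and $|AP_c| = c$, so the flank's required relation reads
\[
|P_bP_c|\cdot |qA| \;=\; c\cdot|qP_b| \;=\; b\cdot|qP_c| .
\]
Equivalently, letting $\iota$ be any inversion centered at $q$ — which by hypothesis makes $\iota(A)\iota(B)\iota(C)$ equilateral — it suffices to show $\iota(A)\iota(P_b)\iota(P_c)$ is again equilateral. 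In complex coordinates, $\iota(z) = q + \rho^2/(\bar z-\bar q)$, and using $\bar P_b = (1-\omega^2)\bar A + \omega^2\bar C$ and $\bar P_c = (1-\omega)\bar A + \omega\bar B$ with $\omega$ a primitive cube root of unity, equilaterality of the image triple becomes the relation $\iota(A) + \omega^2\iota(P_b) + \omega\,\iota(P_c) = 0$; after clearing denominators this is a polynomial identity in $\bar A-\bar q$, $\bar B-\bar q$, $\bar C-\bar q$ whose coefficients of $(\bar A-\bar q)^2$, $(\bar A-\bar q)(\bar B-\bar q)$, $(\bar A-\bar q)(\bar C-\bar q)$ all collapse to $0$ the moment one substitutes the hypothesis $\iota(A)+\omega^2\iota(B)+\omega\,\iota(C)=0$ and uses $1+\omega+\omega^2 = 0$, $\omega^3 = 1$. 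This is the only genuine computation, and it is a few lines.

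It then remains to decide which isodynamic point of the $A$-flank $q$ is. But the proof of \cref{prop:x6} already pins down the \emph{other} one, namely $X_{15}$ of the $A$-flank, with $\T$-barycentrics $(a^2-3b^2-3c^2 : b^2 : c^2)$; for generic $\T$ this differs from $X_{16}(\T)$ (its coordinates are rational in $a,b,c$, those of $X_{16}(\T)$ are not), so $q$ is forced to be the $A$-flank's $X_{16}$. The $B$- and $C$-flanks follow by the cyclic symmetry of the construction.

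The step I expect to cost the most care is not the algebra but the orientation bookkeeping: one must check that the ``outward'' hexagon convention is precisely the one for which $X_{16}(\T)$, rather than $X_{15}(\T)$, is the isodynamic point common to $\T$ and the flanks, since reversing it interchanges the two. (If one prefers to avoid the conceptual argument, the statement can be proved just as \cref{prop:x6} was: compute the three side lengths of the $A$-flank, feed them into the standard barycentric formula for the second isodynamic point, transport the result back to $\T$-coordinates via the flank's vertices, and simplify — the cost there being the careful handling of the area terms.)
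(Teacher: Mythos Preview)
Your approach is correct, and the central identity does collapse as advertised: placing $q$ at the origin and writing $\alpha=\bar A$, $\beta=\bar B$, $\gamma=\bar C$, the hypothesis $\iota(A)+\omega^2\iota(B)+\omega\iota(C)=0$ reads $\beta\gamma+\omega\alpha\beta+\omega^2\alpha\gamma=0$, while clearing denominators in $\iota(A)+\omega^2\iota(P_b)+\omega\iota(P_c)=0$ gives an $\alpha^2$-coefficient that vanishes by $1+\omega+\omega^2=0$ and leaves exactly $\beta\gamma+\omega\alpha\beta+\omega^2\alpha\gamma$ again. (So it is not that three separate coefficients vanish, as your wording suggests, but that the whole cleared expression \emph{equals} the hypothesis --- a minor phrasing point.) Your use of \cref{prop:x6} to rule out $X_{15}$ of the flank is a neat way to dodge the orientation bookkeeping.

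The paper takes a genuinely different route: it inverts not at $X_{16}$ but at the \emph{shared vertex} $B$ of $\T$ and its $B$-flank. It first observes that the flank relation (christened ``30-twins around $B$'') is preserved by any inversion centered at $B$; then, using the $120^\circ$-subtended-angle characterization of $X_{16}$, it identifies the image of $X_{16}$ under that inversion as the inward-equilateral apex on the opposite side of the inverted triangle; finally it checks, again with $1+\omega+\omega^2=0$, that the two apexes (one for $\T$, one for the flank) coincide. Your argument is shorter and more self-contained --- one inversion, one quadratic identity, and the Apollonian characterization of isodynamic points --- whereas the paper's extra scaffolding (the 30-twin notion surviving inversion at the common vertex) makes the geometric mechanism more visible and is the form that most naturally feeds the later grid-propagation results.
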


The following proof sketch was kindly contributed by D. Grinberg \cite{darij2021-private}:

\begin{proof}
Let two triangles $\T=ABC$ and $\T'=A'B'C'$ be called ``30-twins around B'' iff $B = B'$ and $\measuredangle BAA' = \measuredangle AA'B = 30^o$ and $\measuredangle BC'C = \measuredangle CC'B = 30^o$ (angles are in degrees and are directed angles mod $180^o$). In other words, this is the relation between $\T$ its flank triangle at B. It is a symmetric relation, up to relabeling $ABC$ as $CBA$ to get the correct orientation.

Let $ABC$ and $A'BC'$ be two triangles that are 30-twins around $B$. Fix a nontrivial circle $K$ around $B$, and let $X$, $Z$, $X'$ and $Z'$ be, respectively, the images of $A$, $C$, $A'$ and $C'$ under the inversion with respect to $K$. It can be shown that it follows that triangles $XBZ$ and $X'BZ'$ are again 30-twins around B.

The same inversion takes  $X_{16}$ of $ABC$ to the third vertex of an equilateral triangle erected inwardly on the side $XZ$ of triangle $XBZ$. The proof is an angle chase, using the the fact that $X_{16}$ is the unique point of a triangle which sees the three vertices at $120^o$ angles \cite[Isodynamic point]{mw}. The same applies for $X_{15}$ and an equilateral triangle erected outwardly. Likewise for triangle $A'BC'$ and $X'Z'$.

Thus, it remains to be shown that the third vertex of an equilateral triangle erected inwardly on the side $XZ$ of triangle $XBZ$ is identical to the third vertex of an equilateral triangle erected inwardly on side $X'Z'$ of triangle $X'BZ'$. This is easy to prove using complex numbers: $w^2 + w + 1 = 0$ with $w = e^{2pi i/3}$.
\end{proof}

\subsection{Zero-Area Flanks}

Consider a family of triangles $ABC$ where $A,B$ are fixed and $C$ is free. Let $F_c$ denote the flank triangle between the regular hexagons erected upon $AC$ and $CB$. As shown in \cref{fig:sliver}:

\begin{observation} 
$F_c$ will be zero-area if $C$ subtends a $120^o$ angle, i.e., it lies on a circular arc centered on the centroid $O$ of an equilateral erected upon $AB$ and with radius $|OA|$.
\end{observation}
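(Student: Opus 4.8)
The plan is to set up coordinates aligned with the free vertex $C$ and compute when the three vertices of $F_c$ become collinear. First I would place $A$ and $B$ symmetrically, say $A=(-1,0)$ and $B=(1,0)$, so that the equilateral erected on $AB$ has centroid $O=(0,\mp 1/\sqrt3)$ (sign chosen according to which side the hexagons sit), and $|OA|=|OB|=2/\sqrt3$. The claim has two halves: (a) the locus ``$C$ sees $AB$ at $120^\circ$'' is exactly the circular arc described, and (b) on that locus $F_c$ degenerates to zero area.

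Part (a) is the classical inscribed-angle fact: the locus of points $C$ with $\measuredangle ACB = 120^\circ$ is a circular arc on chord $AB$ whose central angle is $240^\circ$, hence whose center subtends $AB$ at $2\cdot(180^\circ-120^\circ)=120^\circ$... more directly, the center $O$ of that arc satisfies $\measuredangle AOB = 2(180^\circ - 120^\circ) = 120^\circ$ on the major-arc side, and a short computation (or the observation that an equilateral triangle on $AB$ has its apex subtending $AB$ at $60^\circ$ and its centroid subtending $AB$ at $120^\circ$) identifies $O$ as the centroid of the equilateral erected on $AB$, with circumradius-of-the-arc equal to $|OA|$. I would just state this and cite the inscribed-angle theorem.

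Part (b) is the computational heart. Using the definition of the flank triangle from \cref{fig:basic}: $F_c$ has one vertex at $C$, and the other two are the vertices of $\H$-on-$AC$ and $\H$-on-$CB$ that are adjacent to $C$. Writing these two neighboring hexagon vertices explicitly as $C$ rotated/translated images of $A$ and $B$ — concretely, the neighbor inside $\H_{AC}$ is obtained from $A$ by the rigid motion that carries the side $AC$ to the next side of the regular hexagon (a $120^\circ$ turn about $C$ composed appropriately), and similarly on the $CB$ side — I would compute the signed area of $F_c$ as a determinant in the coordinates of $C$. This is cleanest in complex numbers: if $a,b,c$ are the affixes and $\omega=e^{i\pi/3}$, the two hexagon-neighbors of $c$ are $c+\omega^{\pm1}(a-c)$ and $c+\omega^{\mp1}(b-c)$ (signs fixed by orientation), and $2i\cdot\mathrm{Area}(F_c) = \overline{(p-c)}(q-c) - (p-c)\overline{(q-c)}$ where $p,q$ are those two neighbors. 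Expanding, the area vanishes iff a certain real quadratic expression in $c$ equals zero, and I expect this expression to factor as (positive constant) times $|c-o|^2 - r^2$ with $o$ the centroid of the erected equilateral and $r=|oa|$, matching part (a).

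The main obstacle is purely bookkeeping: getting the orientations right so that the two hexagon-neighbors of $C$ are the correct vertices (the ones the picture in \cref{fig:basic} designates as the flank's vertices, not the diametrically opposite ones), and thereby ensuring the quadratic in $c$ comes out with the right sign and the right center. I would fix this once and for all by checking the formula against one explicit nondegenerate position of $C$ and one position on the arc. Once the signed-area expression is in hand, recognizing it as the equation of the stated circle is a one-line comparison of coefficients, so no genuinely hard step remains.
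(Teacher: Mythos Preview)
The paper records this statement as an \emph{Observation} and gives no proof at all, so there is nothing to compare your argument against line by line. Your plan is sound and would succeed: the inscribed-angle identification in part~(a) is standard, and the complex-number computation in part~(b) does produce a signed area proportional to $|c-o|^2-r^2$ with $o$ the centroid of the equilateral on $AB$ and $r=|oA|$. One small slip: with $\omega=e^{i\pi/3}$ the hexagon neighbors of $C$ are $c+\omega^{\pm 2}(a-c)$ and $c+\omega^{\mp 2}(b-c)$, not $\omega^{\pm 1}$, since the interior angle of a regular hexagon is $120^\circ$; you already said ``$120^\circ$ turn'' in words, so this is just a typo in the formula and your own proposed sanity check against a concrete position would catch it.

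That said, part~(b) collapses to a one-line angle chase that makes the coordinate computation unnecessary and is presumably what the authors had in mind by calling it an observation. Around the vertex $C$ the full angle decomposes as
\[
\underbrace{120^\circ}_{\text{interior angle of }\H_b\text{ at }C}\;+\;\measuredangle ACB\;+\;\underbrace{120^\circ}_{\text{interior angle of }\H_a\text{ at }C}\;+\;\measuredangle(\text{flank }F_c\text{ at }C)\;=\;360^\circ,
\]
so the flank angle at $C$ equals $120^\circ-\measuredangle ACB$ and vanishes precisely when $\measuredangle ACB=120^\circ$. Combined with your part~(a) this finishes the claim immediately. Your determinant approach buys you the stronger statement that the \emph{signed} area of $F_c$ is a constant multiple of the power of $C$ with respect to the stated circle, which is a nice bonus but not needed here.
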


\begin{figure}
    \centering
    \includegraphics[width=\textwidth] {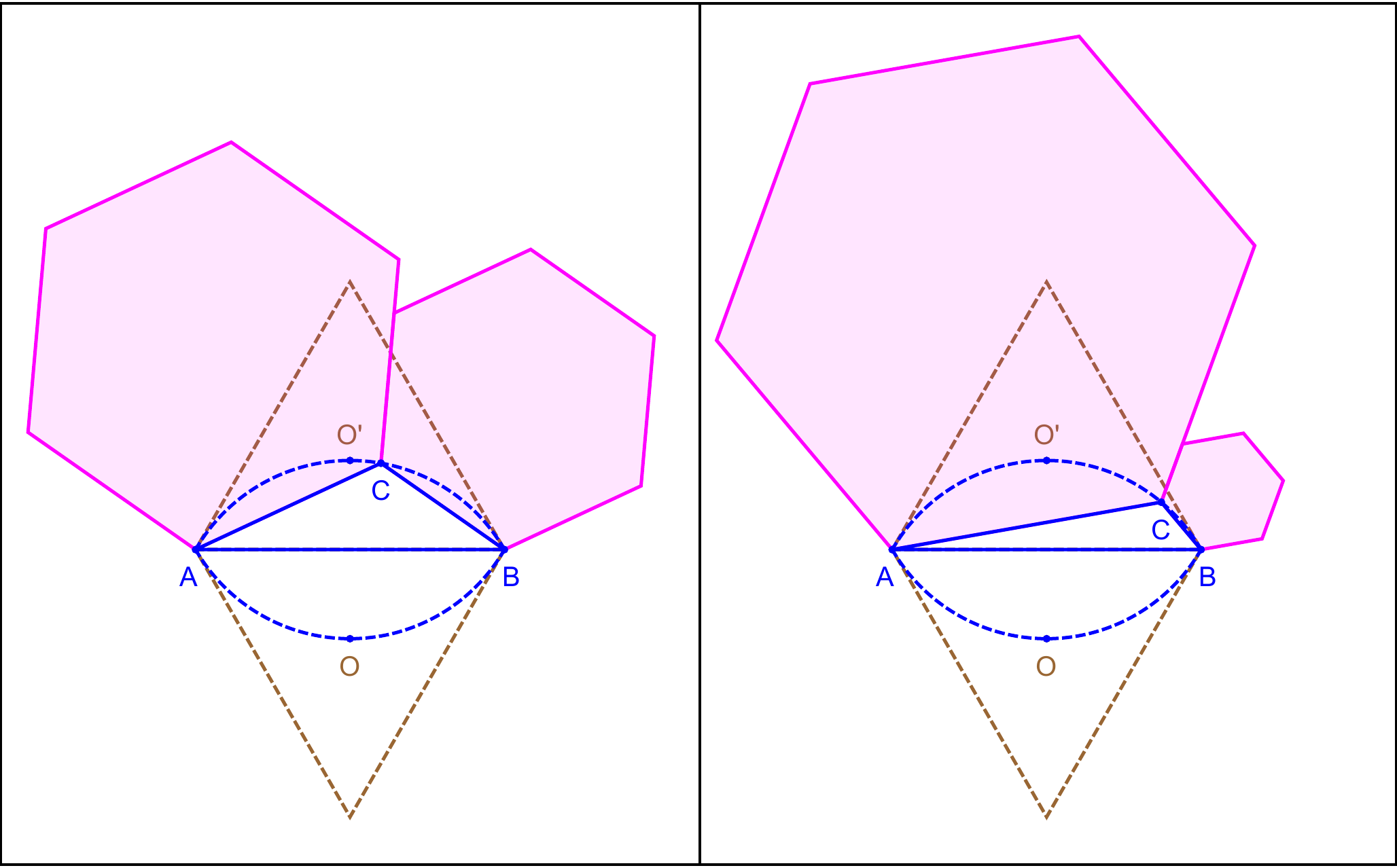}
    \caption{The locus of vertex $C$ such that it subtends a $120^o$ angle, i.e., the $C$-flank is degenerate, is a circular arc (dashed blue) such centered on $O$, the centroid of an equilateral (dashed brown) erected upon $AB$, and with radius $|OA|$ (a mirror arc corresponding to the top equilateral centered on $O'$ is also shown). The left (resp. right) picture shows $C$ in two distinct positions.}
    \label{fig:sliver}
\end{figure}

Consider the construction for the two flank triangles $F_1$ and $F_2$ shown in \cref{fig:self-inter}, namely, departing from $\T=ABC$, erect regular hexagons $\H_1$ and $\H_2$ on sides $AC$ and $BA$, respectively. Consider the a first flank triangle $F_1$ between $\H_1$ and $\H_2$. As shown in the figure, erect a third regular hexagon $\H_3$ on the unused side of $F_1$, and let a second flank triangle $F_2$ sit between $\H_2$ and $\H_3$. While holding $B$ and $C$ fixed, there are positions for $A$ such that $F_2$ has positive, zero, or negative signed area. Referring to \cref{fig:zero-f2}:

\begin{proposition}
$F_2$ will have positive (resp. negative) area if $A$ is exterior (resp. interior) to the circumcircle $\C$ of an equilateral triangle whose base is $B$ and the midpoint of $BC$. In particular, the vertices of $F_2$ become collinear if $A$ is on $\C$.
\end{proposition}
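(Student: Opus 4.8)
The plan is to pass to complex coordinates, where erecting a regular hexagon on a side becomes multiplication by the primitive sixth root of unity $\zeta = e^{i\pi/3}$. Write $a,b,c \in \Cp$ for the affixes of $A,B,C$; since the hypothesis constrains only $A$, I may fix $b,c$ (say $b=0$, $c=1$) and treat $a$ as the single free variable. Record once the affix of the vertex of a regular hexagon erected on a directed side $PQ$ that is adjacent to $P$ but distinct from $Q$: it is $P + (P-Q)\,\zeta^{\pm1}$, the sign determined by the side on which the hexagon is erected. Applying this with the handedness forced by \cref{fig:self-inter}, I obtain $P_1$ — the vertex of $\H_1$ (erected on $AC$) adjacent to $A$ — as an affine function of $a$ and $c$, and $P_2$ — the vertex of $\H_2$ (erected on $BA$) adjacent to $A$ — as an affine function of $a$ and $b$. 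By construction $F_1$ has vertices $A,P_1,P_2$, and since $AP_1$ is a side of $\H_1$ and $AP_2$ a side of $\H_2$, the \emph{unused} side of $F_1$ is $P_1P_2$; erecting $\H_3$ on $P_1P_2$ and again reading off adjacent vertices gives the three vertices of $F_2$: the vertex $P_2$ shared by $\H_2$ and $\H_3$, the vertex of $\H_2$ adjacent to $P_2$ (other than $A$), and the vertex of $\H_3$ adjacent to $P_2$ (other than $P_1$). Each of these is affine in $a$, with coefficients built from $b,c$ and $\zeta$.

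Hence the three vertices $w_1,w_2,w_3$ of $F_2$ are affine functions $w_j = p_j a + q_j$, and the signed area of $F_2$ equals $\tfrac12\operatorname{Im}\!\bigl[\overline{(w_2-w_1)}(w_3-w_1)\bigr]$, which upon substitution takes the form $\mu\,a\bar a + \nu a + \bar\nu\bar a + \rho$ with $\mu,\rho\in\R$; indeed $\mu = -\tfrac12\bigl(\operatorname{Im}(\overline{p_1}p_2) + \operatorname{Im}(\overline{p_2}p_3) + \operatorname{Im}(\overline{p_3}p_1)\bigr)$, twice the signed area of the ``velocity triangle'' $p_1p_2p_3$. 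This is exactly the left-hand side of a circle equation, so it remains to (a) check $\mu\neq0$ — equivalently, that the three vertices of $F_2$ do not translate in parallel as $A$ moves — so that the zero locus is a genuine circle, and (b) complete the square to read off its center and radius and match them to the circumcenter and circumradius $|BC|/(2\sqrt3)$ of an equilateral triangle on base $B$, $\tfrac12(B+C)$; this last step also resolves the two-fold ambiguity in which such equilateral triangle is meant, since the two of them have mirror-image circumcircles. With $\mu\neq0$ confirmed, the signed area has constant sign on each of the two open regions cut out by $\C$, and evaluating it at one convenient position of $A$ (e.g.\ $A$ far from $B,C$, where the sign is that of $\mu$) identifies which region is positive; this yields the stated dichotomy, with collinearity of $F_2$'s vertices occurring precisely on $\C$.

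The work is essentially bookkeeping, not ideas: the only real care needed is to fix, consistently with \cref{fig:self-inter}, the handedness with which $\H_1,\H_2,\H_3$ are erected and which adjacent vertex plays the role of the flank vertex — a wrong choice would produce a different locus, or a degenerate one — and then to push the long but mechanical chain of two generations of the hexagon construction through the area formula. I expect step (b), the identification of the resulting circle with the named one, to be where a computer-algebra check is most welcome, though it is conceptually trivial once $\mu\neq0$ is in hand.
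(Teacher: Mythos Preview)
The paper states this proposition without proof, so there is no argument to compare against; your proposal stands on its own as a route to the result.

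Your plan is sound. Passing to complex affixes with $\zeta=e^{i\pi/3}$ is the natural language here: each hexagon-erection step is affine in the input vertices, so after two generations the three vertices $w_1,w_2,w_3$ of $F_2$ are indeed affine in $a$ with coefficients depending only on $b,c,\zeta$. The signed area $\tfrac12\operatorname{Im}\bigl[\overline{(w_2-w_1)}(w_3-w_1)\bigr]$ is then automatically a real Hermitian form $\mu\,a\bar a+\nu a+\bar\nu\bar a+\rho$ in $a$, whose zero locus is a circle whenever $\mu\neq0$; the sign dichotomy and the collinearity statement follow immediately once $\mu\neq0$ is verified and the circle is identified.

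What you have written, however, is a plan rather than a proof: the two places where actual work remains are exactly the ones you flag. First, the nonvanishing of $\mu$ is not obvious a priori---it amounts to the three ``velocity'' coefficients $p_1,p_2,p_3$ not being collinear, and this needs to be checked (with $b=0$, $c=1$ it reduces to a short identity in $\zeta$). Second, matching the center $-\bar\nu/\mu$ and radius against the circumcircle of the stated equilateral (base $B$ and the midpoint of $BC$, so center at $\tfrac14(3b+c)\pm\tfrac{i}{4\sqrt3}(c-b)$ and radius $|c-b|/(2\sqrt3)$) is the step that actually pins down the result, including which of the two mirror-image equilaterals is meant. Until those two computations are on the page, the argument is an outline; carried out (by hand or CAS, as you suggest), it becomes a complete proof that the paper itself does not supply.
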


\begin{figure}
\begin{subfigure}[b]{0.32\textwidth}
\includegraphics[trim=10 0 80 20,clip,width=1\textwidth]{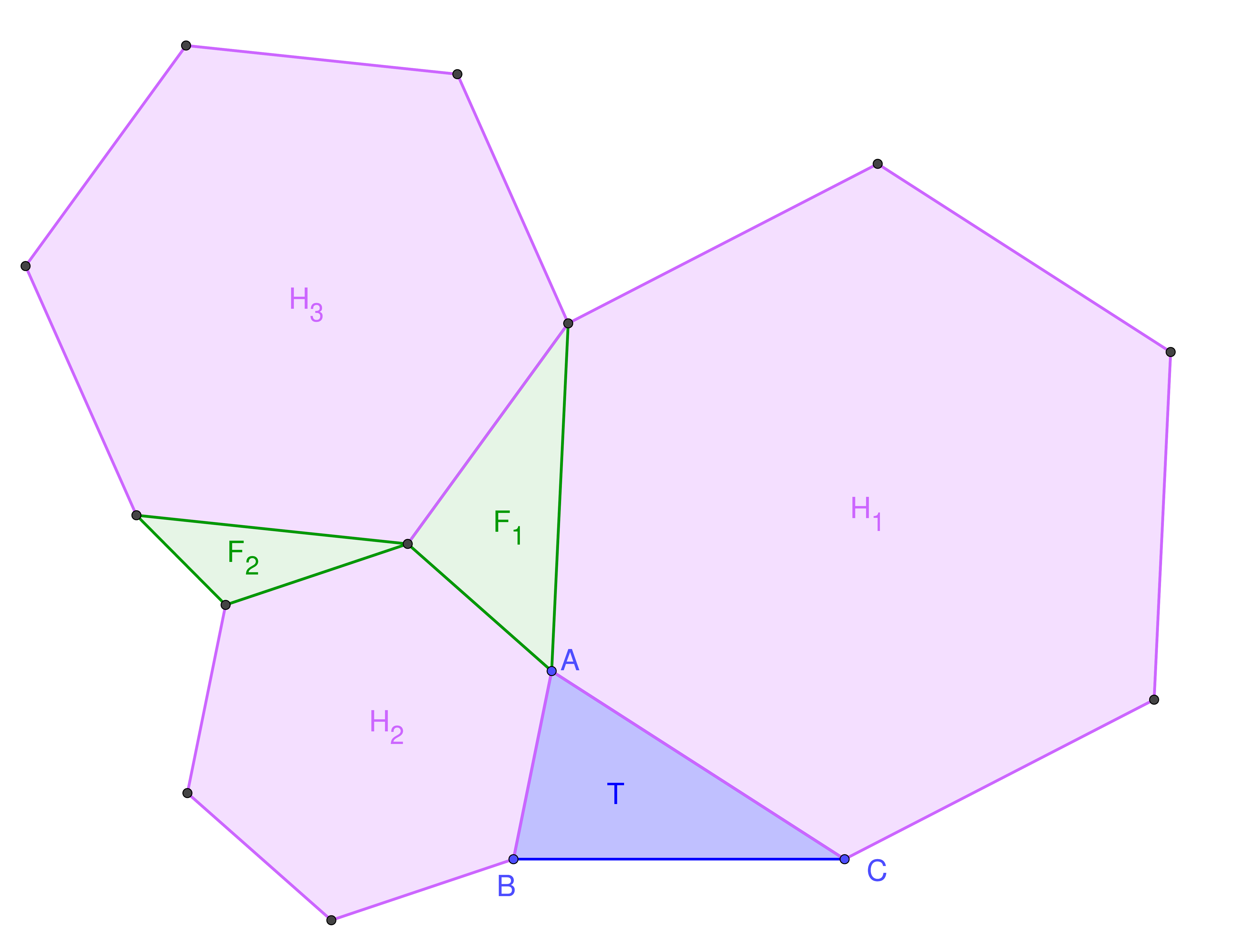}
\end{subfigure}
\begin{subfigure}[b]{0.32\textwidth}
\includegraphics[trim=80 0 70 20,clip,width=1\textwidth]{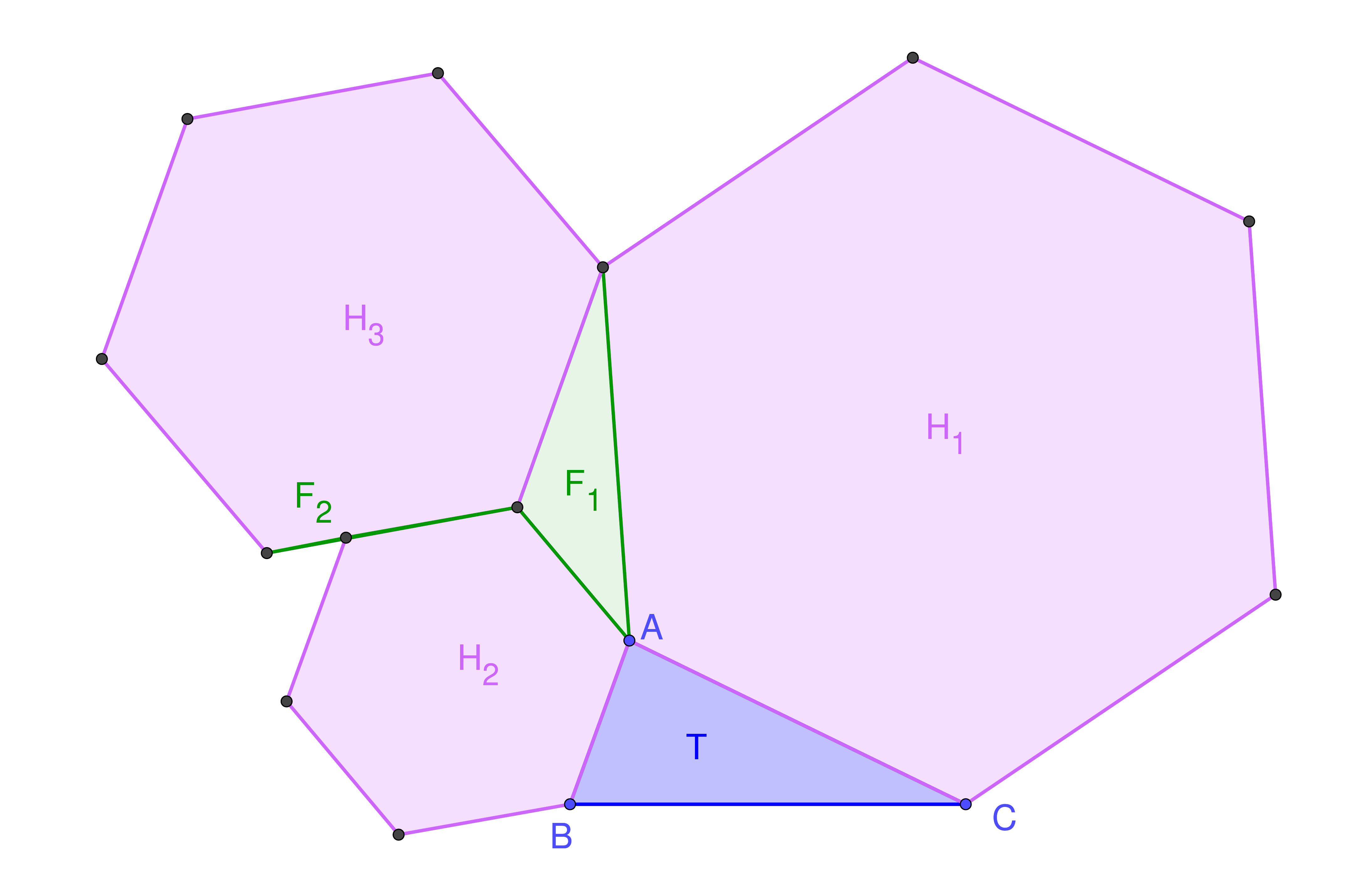}
\end{subfigure}
\begin{subfigure}[b]{0.32\textwidth}
\includegraphics[trim=30 0 50 20,clip,width=1\textwidth]{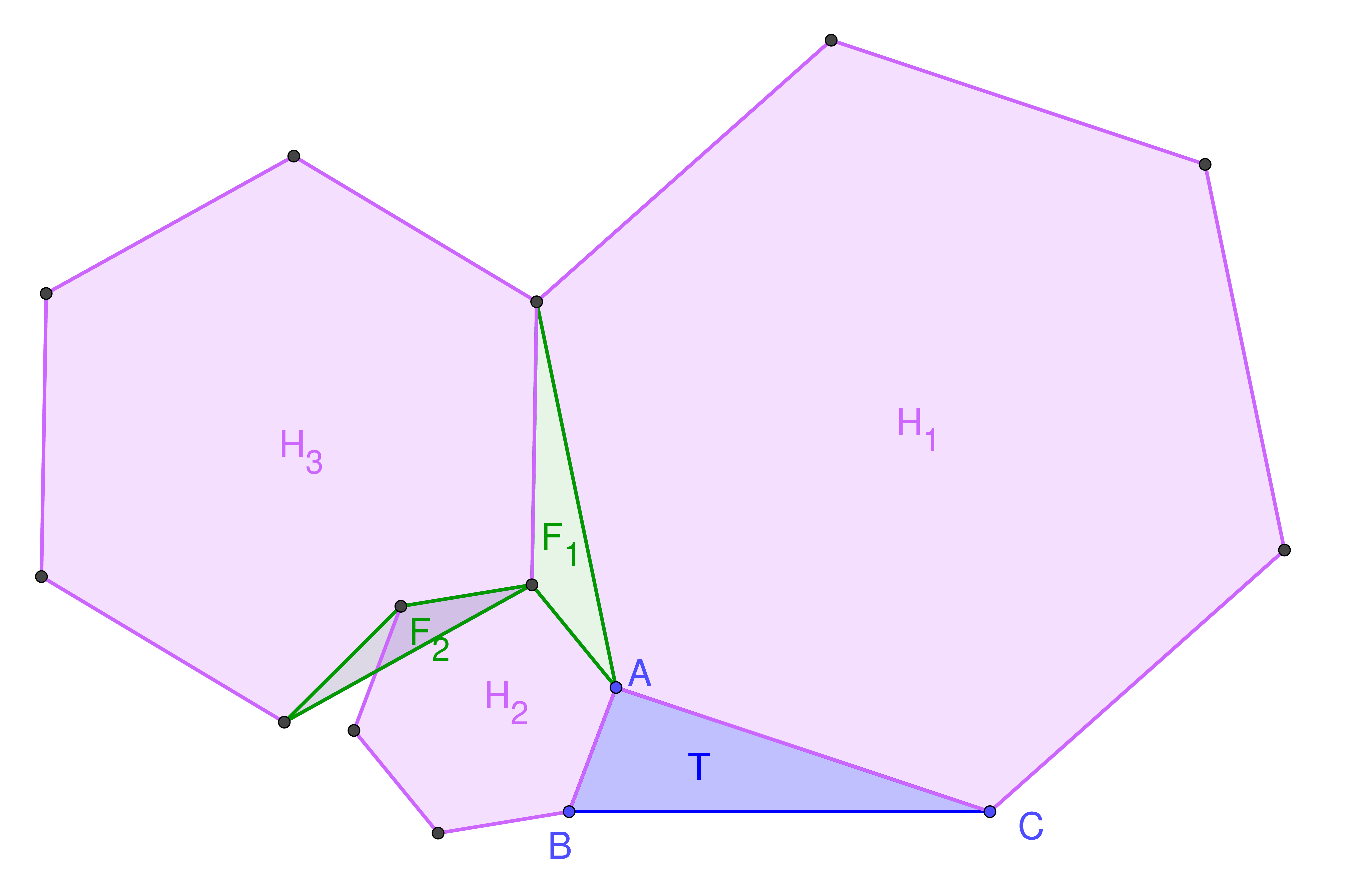}
\end{subfigure}
\caption{For certain positions of $A$ (while keeping $B$ and $C$ stationary), flank triangle $F_2$ will be non-eversed (left), degenerate (middle), or eversed (right).}
\label{fig:self-inter}
\end{figure}

\begin{figure}
    \centering
    \includegraphics[width=\textwidth]{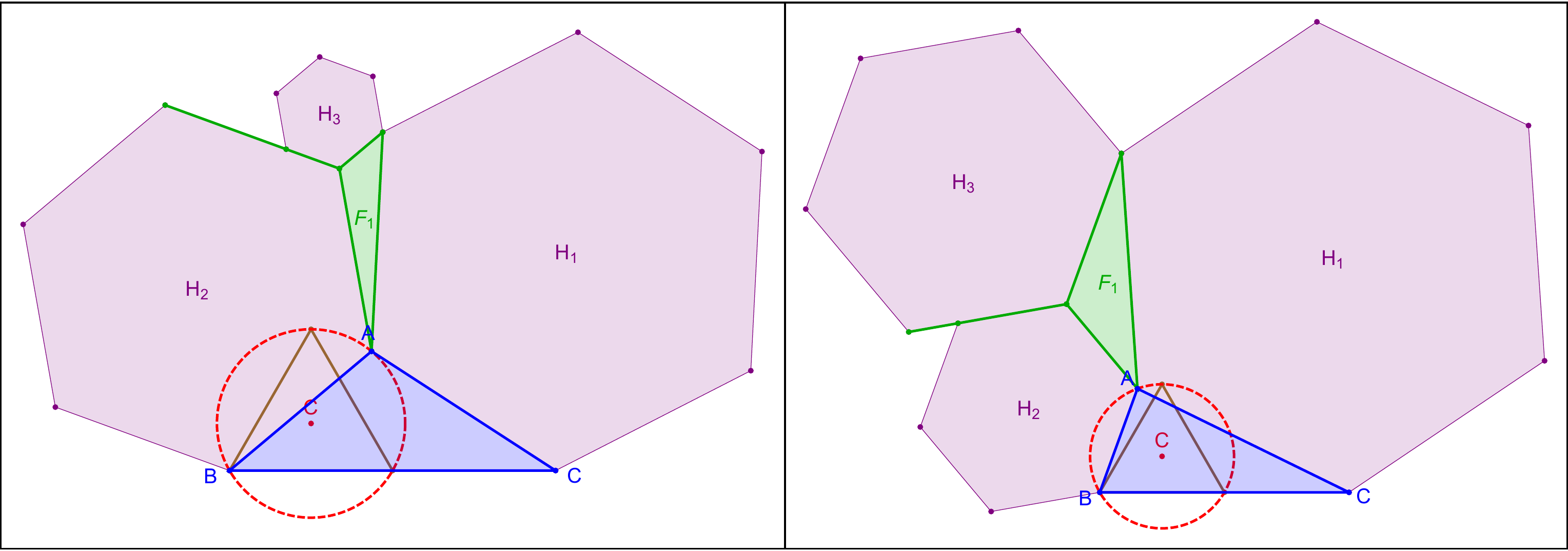}
    \caption{With $B$ and $C$ fixed, the locus of $A$ such that the area of $F_2$ oin \cref{fig:self-inter} is zero is the circumcircle (dashed red) of an equilateral (brown) with base on $B$ and the midpoint of $BC$. Two positions of $A$ are shown (left, right).}
    \label{fig:zero-f2}
\end{figure}

\section{Satellite triangles}
\label{sec:satellite}
Consider the construction shown in \cref{fig:closure}: given a triangle $\T=ABC$, erect regular regular hexagons $\H_0$ and $\H_1$ on sides $BC$ and $AB$. These define a first flank triangle $F_1$. Continue adding hexagons $\H_2,\ldots,H_5$, defining with $\H_0$ new flank triangles $F_2,\ldots,F_5$. Let $D$ (resp. $E$) be a vertex of $F_5$ which is common to $\H_0$ and $\H_5$ (resp. located on $\H_5$ and adjacent to $D$).

Let $A'$ be the reflection of $A$ about $BC$. The following lemma was kindly contributed by A. Akopyan \cite{akopyan2021-private}:

\begin{lemma}
The reflection of the apexes of $F_1,\ldots,F_5$ all coincide with $A'$.
\label{lem:refl-apices}
\end{lemma}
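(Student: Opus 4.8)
The plan is to reduce the statement to a telescoping identity among reflections in the edges of the central hexagon $\H_0$. I read ``apex of $F_i$'' as the vertex $P_i$ of $F_i$ lying on $\H_i$ — the vertex opposite the edge that $F_i$ shares with $\H_0$ — and ``reflection'' as the reflection $\sigma_{e_i}$ in the line carrying that shared edge $e_i\subset\H_0$. Since $\sigma_{e_i}$ is an involution, the claim ``$\sigma_{e_i}(P_i)=A'$'' is equivalent to ``$\sigma_{e_i}(A')=P_i$'', and I would prove the latter for $i=1,\dots,5$.

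First I would set up the recursion. Label the vertices of $\H_0$ cyclically $w_0=C$, $w_1=B$, $w_2,\dots,w_5$ so that $w_i$ is the vertex shared by $\H_0$ and $\H_i$; then $e_i=w_iw_{i+1}$ (in particular $e_0=w_0w_1=CB$), and I put $P_0:=A$. The construction then supplies two facts. (a) Because $\H_0$ is a regular hexagon, the lines carrying consecutive edges $e_{i-1}$ and $e_i$ meet at $w_i$ at an angle of $60^\circ$ (the exterior angle of a regular hexagon), so $\sigma_{e_i}\circ\sigma_{e_{i-1}}$ is a fixed rotation $R_i$ by $\pm 120^\circ$ about $w_i$. (b) Because $\H_i$ is erected on the free edge $w_iP_{i-1}$ of $F_{i-1}$ (and on $BA=w_1P_0$ when $i=1$), the points $w_i,\,P_{i-1},\,P_i$ are three consecutive vertices of $\H_i$, with interior angle $120^\circ$ at $w_i$; with a global orientation fixed, the ``erect outward'' convention makes this the \emph{same} rotation, i.e. $P_i=R_i(P_{i-1})$.

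Granting (a) and (b), the lemma falls out by a one-line induction on $i$ with statement $\sigma_{e_i}(A')=P_i$. The base case $i=0$ is exactly the definition of $A'$: $\sigma_{e_0}(A')=\sigma_{CB}(A')=A=P_0$. For the inductive step,
\[
  \sigma_{e_i}(A')=\bigl(\sigma_{e_i}\circ\sigma_{e_{i-1}}\bigr)\bigl(\sigma_{e_{i-1}}(A')\bigr)=R_i(P_{i-1})=P_i ,
\]
using the inductive hypothesis and facts (a), (b). Specializing to $i=1,\dots,5$ and using that $\sigma_{e_i}$ is an involution yields $\sigma_{e_i}(P_i)=A'$ for all five flanks, which is the lemma.

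The one genuinely delicate point is the orientation bookkeeping hidden in fact (b): one must check that the $120^\circ$ rotation sending $P_{i-1}$ to $P_i$ is the same plane isometry (same sense, not merely same center and magnitude) as the composite $\sigma_{e_i}\circ\sigma_{e_{i-1}}$ from (a). I would settle this by fixing the orientation and observing that, in the labeling above, each triangle $F_i=w_iw_{i+1}P_i$ carries the orientation opposite to that of $\H_0$ — forced, since $F_i$ lies on the outer side of the edge $e_i$ of $\H_0$ — so the sense of $R_i$ is pinned and is the same for all $i$. A fully computational alternative is to place everything in $\Cp$, encode each hexagon step as multiplication by $e^{i\pi/3}$ and each reflection by the standard antiholomorphic formula $z\mapsto p+\frac{q-p}{\overline{q-p}}(\bar z-\bar p)$, and verify the two recursions $P_i=R_i(P_{i-1})$ and $\sigma_{e_i}\circ\sigma_{e_{i-1}}=R_i$ directly, which trades the synthetic bookkeeping for a few lines of algebra.
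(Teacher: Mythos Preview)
The paper does not actually supply a proof of this lemma: it is attributed to A.~Akopyan via private communication and stated without argument. So there is nothing to compare against, and your write-up would in fact fill a gap in the exposition.

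Your argument is correct and is very likely what Akopyan had in mind. The key observation---that the composite of reflections in two consecutive edges of $\H_0$ is the same $120^\circ$ rotation about their common vertex $w_i$ as the one carrying $P_{i-1}$ to $P_i$ along the outer hexagon $\H_i$---is exactly the mechanism that makes the six reflected flanks tile $\H_0$, which is how the paper later deduces the area corollary. One small slip: the three consecutive vertices of $\H_i$ are $P_{i-1},\,w_i,\,P_i$ in that cyclic order, not $w_i,\,P_{i-1},\,P_i$; with your listed order the interior angle would sit at $P_{i-1}$. This does not affect the substance.

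The orientation bookkeeping you flag is indeed the only place requiring care. Your heuristic (each $F_i$ lies on the outer side of $e_i$, hence carries the orientation opposite to $\H_0$) is correct and can be made precise in one line once a sign convention is fixed; alternatively, a single explicit check for $i=1$ pins the sign of $R_1$, and then the same sign propagates because both recursions (a) and (b) are governed by walking once around $\H_0$ in a fixed sense.
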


\begin{figure}
    \centering
    \includegraphics[trim=200 50 250 0,clip,width=.7\textwidth,frame]{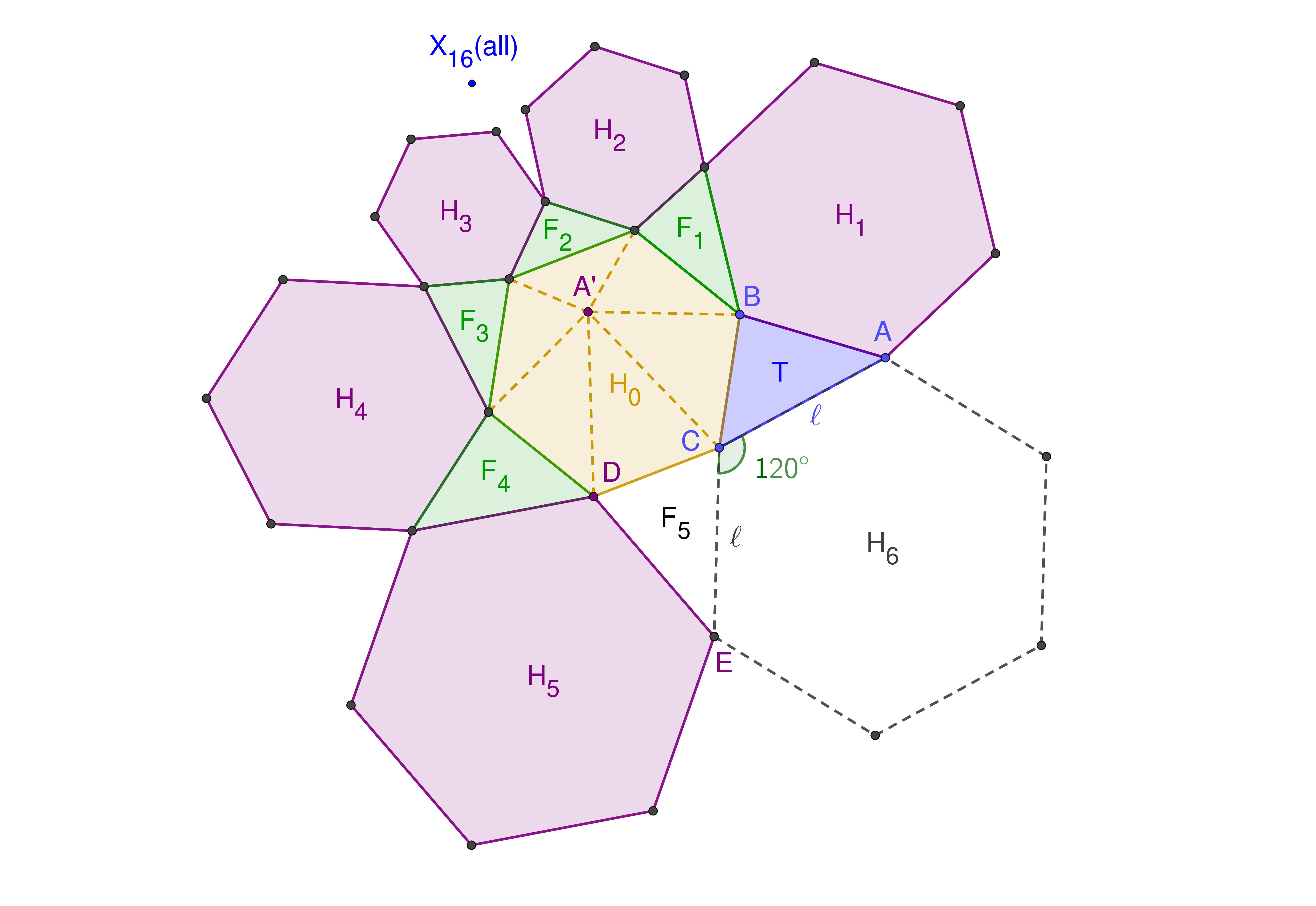}
    \caption{The grid ``closure'' property: departing from $\T=ABC$ add regular hexagons $\H_0,\ldots H_5$ and flanks $F_1,\ldots F_5$. The reflection of the apexes of $F_i$ about their based (sides of $\H_i$) is a common point $A'$. This implies the regular hexagon $\H_6$ erected on $AC$ fits perfectly between $\H_1$ and $\H_5$. The $X_{16}$ common to $\T$ and the five flanks is shown in the upper left of the picture.}
    \label{fig:closure}
\end{figure}

Still referring to \cref{fig:closure}, let $CE$ be the side of $F_5$ not on $\H_0$ nor on $\H_5$. We can use \cref{lem:refl-apices} to show that:

\begin{proposition}
$|CE|=|AC|$ and $\measuredangle ECA = (2\pi)/3$. 
\label{prop:snap}
\end{proposition}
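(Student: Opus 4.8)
The plan is to use \cref{lem:refl-apices} to pin down the location of $E$ and then to recognise the map $A\mapsto E$ as a rotation about $C$. First I would observe that, among the three sides of $F_5$, the side $CD$ lies on $\H_0$, the side $DE$ lies on $\H_5$, and $CE$ is the remaining, free side; hence relative to its base $CD$ on $\H_0$ the apex of $F_5$ is $E$. \cref{lem:refl-apices} then says that the reflection of $E$ about the line $CD$ is the common point $A'$, i.e.\ $E$ is the reflection of $A'$ about line $CD$. Since $A'$ is itself the reflection of $A$ about line $BC$, the point $E$ is obtained from $A$ by reflecting first about line $BC$ and then about line $CD$.

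Next I would use that the lines $BC$ and $CD$ both pass through $C$, so the composition of those two reflections is a rotation about $C$, through an angle equal to twice the angle between the two lines. Because $D$ is the vertex of $\H_0$ adjacent to $C$ other than $B$, the points $B$ and $D$ are the two neighbours of $C$ on the regular hexagon $\H_0$, so $\measuredangle BCD$ equals the interior angle $2\pi/3$ of $\H_0$; hence the rotation angle is $\pm 2\pi/3$. Therefore $E$ is the image of $A$ under a rotation about $C$ through $2\pi/3$, which immediately gives $|CE|=|CA|$ and $\measuredangle ECA=2\pi/3$; the sense of the rotation — and hence the sign of the directed angle — is fixed by recording that $\H_0$, hence $D$, lies on the side of line $BC$ away from $A$.

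The only point that really needs care, and the one I would flag as the mild obstacle, is matching the statement of \cref{lem:refl-apices} to this configuration: one must be sure that the ``base'' of $F_5$ meant there is its side on $\H_0$ (so the apex is $E$, not $C$), and that the third vertex of $F_5$ is indeed $C$. Both are forced by the way the grid is assembled — the vertex of $F_i$ common to $\H_0$ and $\H_i$ runs through five consecutive vertices of $\H_0$ starting from $B$, so $F_5$ is anchored at $D$, the neighbour of $C$, with its $\H_0$-side exactly $CD$. Should one prefer to avoid the reflection-composition argument, the same conclusion drops out of a brief coordinate check: take $B=(0,0)$ and $C=(a,0)$ with $\H_0$ erected below the $x$-axis, so $D=(3a/2,-a\sqrt{3}/2)$; writing $A=(p,q)$ and $A'=(p,-q)$ and reflecting $A'$ about line $CD$ yields $E$, whereupon $|CE|^2=(a-p)^2+q^2=|CA|^2$ and $\cos\measuredangle ECA=-\tfrac12$ follow by direct expansion.
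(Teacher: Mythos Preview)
Your proof is correct and follows exactly the route the paper indicates: the paper does not spell out a proof but simply prefaces the proposition with ``We can use \cref{lem:refl-apices} to show that,'' and your argument supplies precisely those details --- identifying $E$ as the apex of $F_5$ over its $\H_0$-side $CD$, applying the lemma to get $E=\text{refl}_{CD}(A')=\text{refl}_{CD}\circ\text{refl}_{BC}(A)$, and recognising the composite as a rotation about $C$ through $2\pi/3$. The one place to tighten the exposition is the step from ``$\measuredangle BCD=2\pi/3$'' to ``rotation angle $\pm 2\pi/3$'': the composite of two reflections is a rotation through \emph{twice} the directed angle between the lines, so it is cleaner to say the lines $BC$ and $CD$ meet at $C$ at a directed angle of $\pi/3$ (equivalently $-2\pi/3\bmod\pi$), whence the rotation is through $2\pi/3$.
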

This implies that a sixth, regular hexagon $\H_6$ can be erected on $AC$ and one of its vertices will snap perfectly against vertex $E$ of $\H_5$. Referring to \cref{fig:grid}:

\begin{corollary}
This construction can be extended ad infinitum, creating, modulo self-intersections, a locally-consistent contiguous grid.
\end{corollary}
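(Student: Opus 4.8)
The plan is to treat \cref{prop:snap} as a local closure axiom and then propagate it by induction on finite patches. First I would package the satellite configuration of \cref{fig:closure} into a single local object — call it a \emph{flower} — consisting of a central regular hexagon $\H_0$, the six hexagons $\H_1,\dots,\H_6$ around it, and the six triangles ($\T$ together with $F_1,\dots,F_5$) wedged between consecutive hexagons. The content of \cref{prop:snap} (via \cref{lem:refl-apices}) is exactly that such a flower \emph{exists and closes up}: starting from any triangle $\T$ with a regular hexagon erected on one of its sides, the prescribed sequence of five further hexagons and flanks returns to the starting hexagon, with $\H_6$ snapping into place because $|CE|=|AC|$ and $\measuredangle ECA = 2\pi/3$. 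Alongside this I would record three easy \emph{rigidity} remarks: (a) a regular hexagon is determined by any one of its sides; (b) the flank triangle between two hexagons sharing a vertex is determined by those two hexagons; and hence (c) within a flower every petal-hexagon and every wedge-triangle is determined by the central hexagon together with a single adjacent triangle. In particular, once a hexagon and one incident triangle are fixed, the entire flower around that hexagon is forced.

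Next I would run the induction. Let $G_1$ be a single flower. Given a finite connected patch $G_n$ that is a union of flowers, note that since $G_n$ is finite it contains hexagons whose flowers are not yet complete; pick such a hexagon $\H$, which (being part of some flower already) is incident to at least one triangle $\T$ lying in $G_n$. Applying \cref{prop:snap} with this $\T$ and $\H=\H_0$ completes the flower around $\H$; by rigidity remark (c) the newly adjoined hexagons and triangles are uniquely determined, so they agree with whatever portion of that flower was already present in $G_n$. Set $G_{n+1}=G_n\cup(\text{this flower})$. Each step adjoins at least one new, incomplete-flowered hexagon, so the process never terminates, and the patches are nested; hence $G:=\bigcup_n G_n$ is an infinite configuration in which every hexagon carries a complete flower and every triangle is surrounded by three hexagons — that is, $G$ is contiguous and locally consistent.

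Finally I would explain the ``modulo self-intersections'' caveat and why nothing stronger is claimed. \cref{prop:snap} is purely local: it controls the side-length and angle data around one hexagon but says nothing about whether flowers grown from far-apart parts of the boundary of $G_n$ occupy disjoint regions of the plane — indeed the zero-area and eversion phenomena of \cref{sec:trio} already show that individual flanks can degenerate or reverse orientation, which globally forces overlaps for sufficiently spread-out reference triangles. So $G$ is an abstract locally-consistent complex that maps to the plane but need not embed. I expect the single load-bearing ingredient to be the already-established closure \cref{prop:snap}; the rest — formalizing ``flower,'' the rigidity remarks, and the nested induction — is routine, the only point needing care being to make precise that local forcing guarantees each new flower is compatible with the existing patch wherever the two overlap.
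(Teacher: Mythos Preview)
Your proposal is correct; the paper itself offers no argument for this corollary beyond the sentence ``This implies that a sixth, regular hexagon $\H_6$ can be erected on $AC$ and one of its vertices will snap perfectly against vertex $E$ of $\H_5$'' preceding it, so your flower-induction is a rigorous elaboration of what the paper leaves tacit rather than a different route. Your rigidity remarks (a)--(c) and the compatibility check when a new flower overlaps the existing patch are exactly the points one would need to spell out, and your reading of the ``modulo self-intersections'' caveat matches the paper's intent as signaled by the degeneracy discussion in \cref{sec:trio}.
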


Since in this infinite grid one can isolate a central triangle and the three flanks around it (see \cref{fig:basic}), you get the following propagation:

\begin{corollary}
The second isodynamic points $X_{16}$ of all flank triangles in any contiguous grid coincide in a single point.
\end{corollary}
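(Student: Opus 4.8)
The plan is to promote the earlier coincidence result --- that the three flank triangles erected on a triangle share its second isodynamic point $X_{16}$ --- into a statement that transports $X_{16}$ along the whole grid, after which the corollary reduces to a connectivity claim. First I would note that the ``flank'' relation in play is symmetric: if $\sigma$ is any triangle of the grid and $\tau$ is the flank triangle erected, via regular hexagons on two sides of $\sigma$, at a vertex $v$ of $\sigma$, then conversely $\sigma$ is the flank triangle of $\tau$ at that same vertex $v$. This is exactly the symmetry of the ``$30$-twins around $v$'' relation invoked in the proof of the coincidence result, and it can also be read off the figure directly, since the two hexagons meeting at $v$ that separate $\sigma$ from $\tau$ carry identical angular data for both triangles. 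Accordingly I would introduce an undirected graph $G$ whose vertices are all the triangular cells of the grid (the reference triangle together with every flank) and whose edges are precisely these mutual-flank pairs.

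Because the proof of the coincidence result is purely local --- it uses only that the two triangles concerned are $30$-twins around a common vertex, never that one of them is the ``original'' reference triangle --- it applies verbatim to every edge of $G$. Hence $X_{16}(\sigma)=X_{16}(\tau)$ whenever $\sigma$ and $\tau$ are joined by an edge of $G$, so $X_{16}$ is constant on each connected component of $G$. The whole statement will therefore follow once $G$ is shown to be connected.

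That connectivity is, I expect, the real work. The key combinatorial fact is that around any single hexagon $\H$ of the grid the six ``satellite'' triangles resting on its six sides occur in cyclic order in such a way that consecutive satellites are mutual flanks at the vertex of $\H$ they share --- this is exactly the situation of \cref{fig:closure}, where $\T,F_1,\dots,F_5$ are the satellites of $\H_0$ and $\H_6$ closes the ring. Consequently the six satellites of any hexagon all lie in one component of $G$. Since the grid is contiguous, every triangular cell is joined to the reference triangle $\T$ by a finite chain of hexagons in which successive triangles are co-satellites of a common hexagon; hence every cell lies in the $G$-component of $\T$, and $X_{16}$ takes the single value $X_{16}(\T)$ on all flanks of the grid.

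The main obstacle is precisely this last paragraph: from the satellite construction of \cref{sec:satellite} (\cref{prop:snap} and the ensuing corollary on unbounded extension) one must verify carefully both that consecutive satellites of a hexagon are genuinely mutual flanks, and --- since the grid is only locally consistent, i.e.\ an abstract immersed complex rather than a subset of the plane --- that it is nevertheless connected through these hexagon-sharing steps. Self-intersections cause no trouble, because the argument is entirely combinatorial and the $X_{16}$ of each cell is an intrinsic attribute of that triangle; once the bookkeeping is in place the corollary is immediate.
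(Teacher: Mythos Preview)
Your argument is correct and is essentially the paper's own: both propagate Proposition~2 (the local $X_{16}$ coincidence between a triangle and its flanks) along the adjacency structure of the grid. The paper's justification is a single sentence---``in this infinite grid one can isolate a central triangle and the three flanks around it, so you get the following propagation''---which is exactly your graph $G$ with the observation that Proposition~2 applies at every vertex; you have simply made the symmetry of the flank relation and the connectivity step explicit where the paper leaves them implicit in the grid construction of \cref{sec:satellite}.
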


\begin{figure}
    \centering
    \includegraphics[trim=150 150 150 50,clip,width=.8\textwidth]{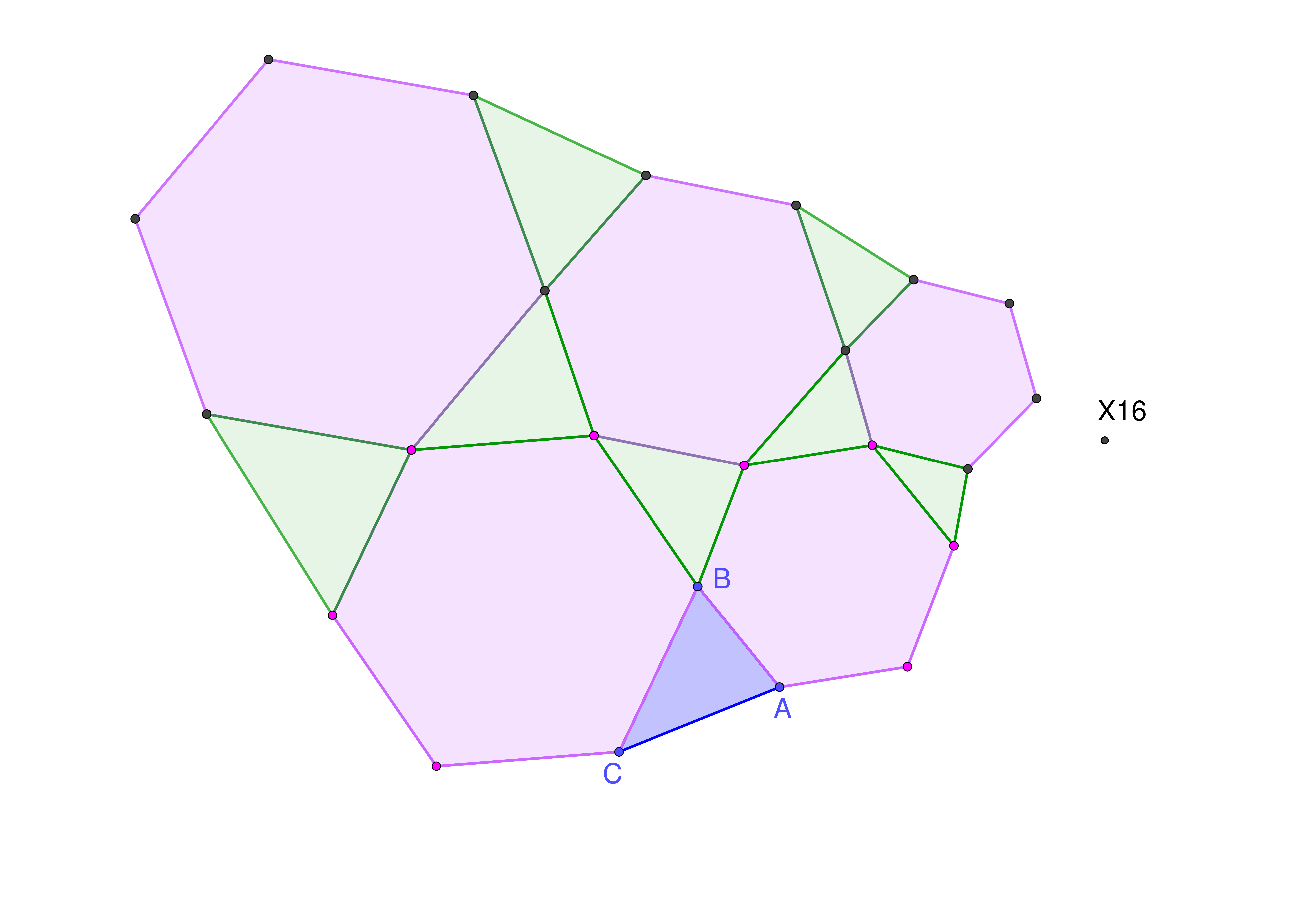}
    \caption{Departing from $\T=ABC$ a contiguous grid can be built by adding more triangles and regular hexagons as needed. The second isodynamic point $X_{16}$ of all interstitial triangles is common with that of $\T$.}
    \label{fig:grid}
\end{figure}

Consider the related construction in \cref{fig:satellite}: let $\H$ denote a fixed regular hexagon with vertices $Q_i$, $i=1,\ldots 6$. Given a point $P$, let $Q_1 Q_2 P$ be a first ``satellite'' triangles $F_1$. Create 5 new satellite triangles $F_i$ as follows: Let $F_{i}$ be the flank triangle obtained by erecting a regular hexagon on a side of $F_{i-1}$, $i=2,\ldots 6$.

\begin{figure}
    \centering
    \includegraphics[width=\textwidth]{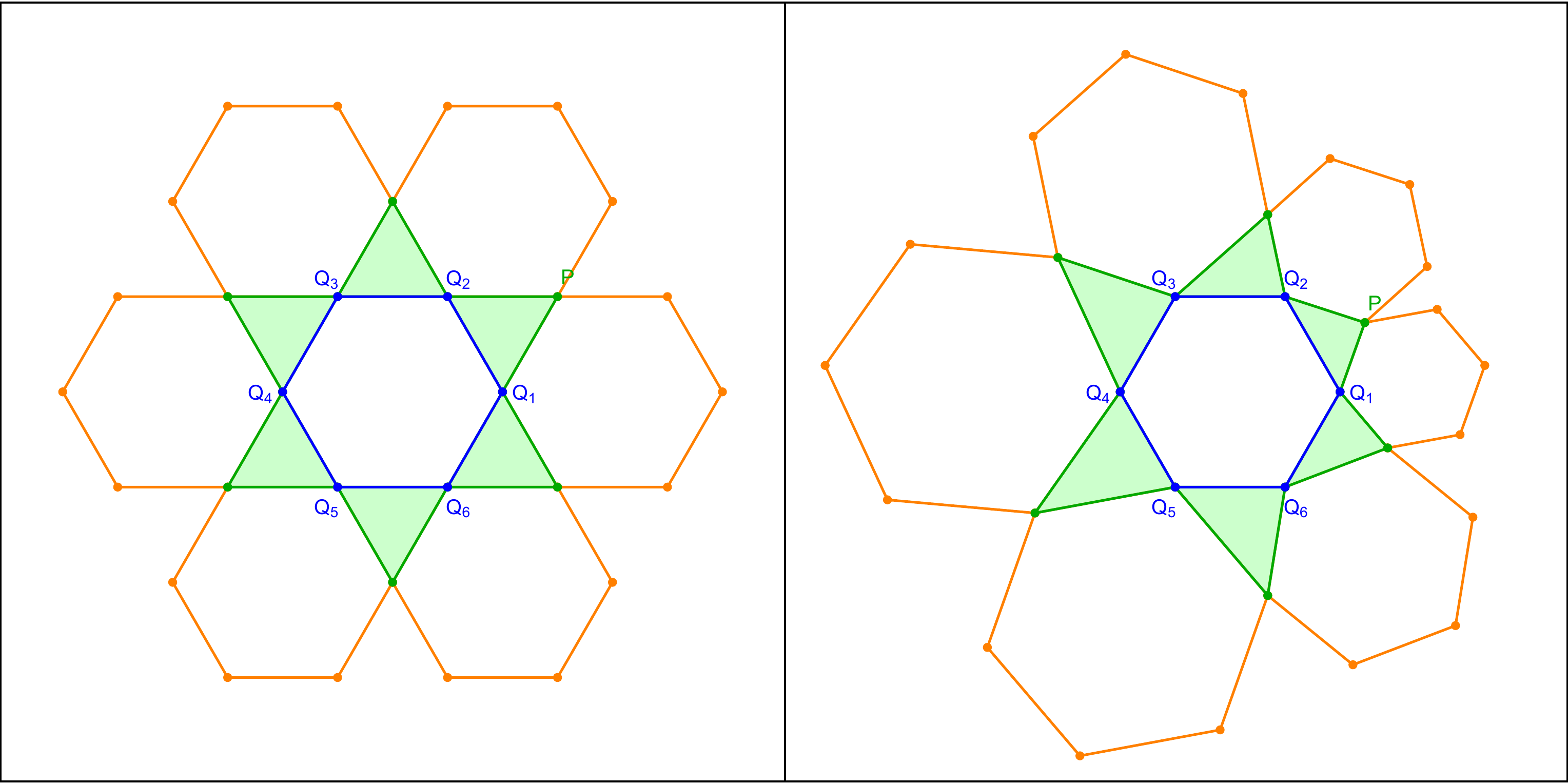}
    \caption{Given a fixed central hexagon $\H$ with vertices $Q_i,i=1,\ldots,6$, let 6 ``satellite'' flank triangles (green) be constructed around $\H$ departing from a first triangle $P Q_1 Q_2$. The sum of areas of said 6 satellites is independent of $P$ and equal to the area of $\H$. In the left (resp. right) $P$ is positioned so the construction is regular (resp. irregular).}
    \label{fig:satellite}
\end{figure}

From \cref{lem:refl-apices,prop:snap}, the reflected images of the flanks about their bases fill the interior of $\H$, therefore:

\begin{corollary}
The sum of the areas of the 6 satellite triangles is independent of $P$ and equal to the area of $\H$.
\end{corollary}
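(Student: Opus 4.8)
The plan is to recognize $\sum_i[F_i]$ as the area of a fan triangulation of $\H$, with the common apex of that fan supplied by \cref{lem:refl-apices} and the ``wrap-around'' of the six satellites guaranteed by \cref{prop:snap}. I would treat all areas as \emph{signed}, matching the eversion phenomena seen earlier (cf.\ \cref{fig:self-inter}).

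First I would fix the combinatorics: around the central regular hexagon $\H$ with consecutive vertices $Q_1,\dots,Q_6$ (indices mod $6$), the satellite construction produces six triangles $F_1,\dots,F_6$ with $F_1=Q_1Q_2P$, each $F_i$ meeting $\H$ along the side $Q_iQ_{i+1}$ and having a third vertex (``apex'') $P_i$, with $P_1=P$. That the chain closes up exactly after six steps — $F_6$ sitting flush against $Q_6Q_1$, with no gap or overlap of the side data — is the content of \cref{prop:snap} applied with $\H$ in the role of $\H_0$, i.e.\ the local-consistency assertion behind \cref{fig:satellite}.

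Next I would invoke \cref{lem:refl-apices}, in the mild variant where its five-flank necklace is closed into a six-flank one around $\H$: the reflection of each apex $P_i$ in the line $Q_iQ_{i+1}$ is one and the same point $P^{*}$. Since reflection in a line is an isometry, $[F_i]$ equals the area of the triangle $P^{*}Q_iQ_{i+1}$ up to sign, and with the orientations inherited from the construction these signs are uniform, so $\sum_{i=1}^{6}[F_i]=\sum_{i=1}^{6}[P^{*}Q_iQ_{i+1}]$. Finally, for \emph{any} point $X$ and any polygon one has $\sum_i[XQ_iQ_{i+1}]=[Q_1\cdots Q_n]$, because $[XQ_iQ_{i+1}]=\tfrac12\,Q_i\times Q_{i+1}-\tfrac12\,X\times(Q_{i+1}-Q_i)$ and $\sum_i(Q_{i+1}-Q_i)=0$; applied to $X=P^{*}$ and $\H=Q_1\cdots Q_6$ this yields $\sum_i[P^{*}Q_iQ_{i+1}]=[\H]$, hence $\sum_i[F_i]=[\H]$, which is visibly independent of $P$.

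The real work sits in two places. One is checking that \cref{lem:refl-apices} genuinely transfers to the closed six-fold arrangement around $\H$, and that — whatever orientation the satellites acquire — all six reflected triangles $P^{*}Q_iQ_{i+1}$ carry the sign of $[\H]$ rather than its negative; this orientation audit must also survive the passage from the ``regular'' to the ``irregular'' (eversed) configuration of \cref{fig:satellite}. A convenient way to dispatch that second point, once the regular case is in hand, is analyticity: $\sum_i[F_i]$ is a polynomial in the coordinates of $P$ that equals the constant $[\H]$ on an open set of positions, hence everywhere. A brute-force alternative — place $\H$ in coordinates, write the six apexes explicitly, and expand $\sum_i[F_i]$ until it telescopes to $[\H]$ — is available but uninformative, so I would keep it only as a sanity check.
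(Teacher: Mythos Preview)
Your argument is correct and is essentially the paper's own one-line justification, only spelled out in more detail: the paper simply asserts that, by \cref{lem:refl-apices} and \cref{prop:snap}, the reflected images of the flanks about their bases fill the interior of $\H$, which is precisely your fan decomposition at the common reflected apex $P^{*}$. Your extra care with signed areas and the analyticity remark to cover the eversed configurations is a useful addition that the paper leaves implicit.
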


Referring to \cref{fig:conic}:

\begin{proposition}
The apexes of the 6 satellite triangles lie on a conic iff $P$ lies on either line $Q_3 Q_2$ or line $Q_6 Q_1$.
\end{proposition}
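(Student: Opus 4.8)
The plan is to turn the statement into an explicit algebraic identity. Since the assertion is invariant under similarities, I would first fix coordinates by placing the regular hexagon $\H$ with centre at the origin and vertices $Q_k=\omega^{\,k-1}$, $\omega=e^{i\pi/3}$, and work with complex affixes, writing $p$ for the affix of the free point $P$ (so $P$ is the apex of $F_1=PQ_1Q_2$). The crucial preliminary step is to obtain, for each $k$, the affix of the apex $P_k$ of the satellite $F_k$ as an \emph{affine} function of $p$,
\[
P_k=\alpha_k\,p+\beta_k ,
\]
where $\alpha_k$ is a sixth root of unity and $\beta_k$ a fixed $\Cp$-linear combination of the $Q_j$, both independent of $p$. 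This is legitimate: each step $F_{k-1}\mapsto F_k$ is the composition of ``erect a regular hexagon on a side of $F_{k-1}$'' (a rotation by $\pm 60^{\circ}$ or $\pm120^{\circ}$) with ``pass to the flank triangle'' (a rotation about the vertex $Q_k$ of $\H$), so the whole map $p\mapsto P_k$ is a composition of rotations, hence an affine map with a rotational linear part. Tracking these rotations by hand — or simply running the recursion once in a CAS — produces the constants $\alpha_k,\beta_k$; the closure property \cref{prop:snap} is what guarantees the bookkeeping is consistent after six steps.

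With the affixes in hand I would invoke the standard determinantal test: the six apexes lie on a conic if and only if
\[
\Delta(p)\;:=\;\det\bigl[\,z_k^{2},\; z_k\bar z_k,\; \bar z_k^{2},\; z_k,\; \bar z_k,\; 1\,\bigr]_{k=1}^{6}\;=\;0,\qquad z_k=\alpha_k p+\beta_k ,
\]
the rows of the $6\times6$ matrix being indexed by $k$ and the columns by the six listed monomials. Substituting the affine expressions and simplifying with the cyclotomic relations $\omega^{6}=1$, $\omega^{3}=-1$, $1+\omega+\cdots+\omega^{5}=0$, I expect $\Delta(p)$ to collapse — on account of the strong cyclic symmetry of the constants $\alpha_k,\beta_k$ — to a nonzero constant times the product of the two real-linear forms that cut out the lines $Q_2Q_3$ and $Q_1Q_6$, possibly multiplied by a factor having no real zeros. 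Establishing this factorization, and in particular that it is these two lines (and not some other pair) that appear, is the heart of the matter.

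To make the answer inevitable and to cut down the number of independent checks, I would bring in symmetry. The reflection $\tau$ across the perpendicular bisector of the edge $Q_1Q_2$ fixes $\H$, swaps $Q_1\leftrightarrow Q_2$ and $Q_3\leftrightarrow Q_6$, and carries the satellite construction based at the ordered edge $Q_1Q_2$ to the same construction based at $Q_2Q_1$, i.e.\ traversed in the reverse sense around $\H$. Hence $\tau$ sends the unordered set $\{P_1,\dots,P_6\}$ built from $P$ onto the one built from $\tau(P)$, so the ``bad locus'' $\mathcal B=\{P:\text{the six apexes lie on a conic}\}$ is $\tau$-invariant; and since $\tau$ interchanges the two lines $Q_2Q_3$ and $Q_1Q_6$, they form a single $\tau$-orbit — exactly what a clean factorization of $\Delta$ ought to yield. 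Then: verifying that three collinear points on one of the candidate lines make $\Delta$ vanish there forces that line to divide $\Delta$ (and $\tau$-invariance gives the other one for free); evaluating $\Delta$ at $P=O$, the centre of $\H$ — where one checks directly that the six apexes form a non-degenerate configuration not lying on any conic — shows both that $\Delta\not\equiv 0$ and that the symmetry axis is not part of $\mathcal B$. Combining these, $\mathcal B$ is exactly $\{Q_2Q_3\}\cup\{Q_1Q_6\}$, which is the claim.

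The step I expect to be the main obstacle is the preliminary one: getting the apex formulas $P_k=\alpha_k p+\beta_k$ exactly right, since it requires pinning down the precise conventions of the flank recursion — on which side of $F_{k-1}$ the next hexagon is erected, which of the three vertices is the apex, and the orientations — and a single sign slip there propagates through $\Delta$. Once those constants are fixed the determinant simplification is routine (and best finished symbolically), while the symmetry remarks above both corroborate the outcome and supply most of the bookkeeping needed to conclude the ``iff''.
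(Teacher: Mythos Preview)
The paper does not supply a proof of this proposition: it is stated, illustrated by a figure, and the text moves on. So there is nothing to compare your argument against, and your write-up would in fact be filling a gap in the paper.

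Your overall plan (affine apex formulas, the $6\times 6$ conic determinant, then a symmetry-and-test-point argument) is sound, but there is one concrete error and one missed simplification.

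\smallskip
\noindent\textbf{The test point fails.} With $\H$ centred at the origin and $Q_k=e^{i(k-1)\pi/3}$, Lemma~\ref{lem:refl-apices} gives the apex formula directly: if $P'=\sigma_1(P)$ is the reflection of $P$ in the line $Q_1Q_2$, then $P_k=\sigma_k(P')$, the reflection of $P'$ in the line $Q_kQ_{k+1}$. Now take your proposed test point $P=O$, the centre. Then $P'=\sigma_1(0)=\tfrac32+i\tfrac{\sqrt3}{2}$, and a one-line check shows this point lies on \emph{both} the line $Q_2Q_3$ (namely $y=\tfrac{\sqrt3}{2}$) and the line $Q_6Q_1$; hence $\sigma_2(P')=\sigma_6(P')=P'$, i.e.\ $P_2=P_6$. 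Two rows of your $6\times6$ matrix coincide and $\Delta(O)=0$. So the centre does \emph{not} witness $\Delta\not\equiv0$, and more importantly it shows that the real zero set of $\Delta$ strictly contains the two lines: your anticipated factorisation ``two linear forms times a factor with no real zeros'' cannot be literally correct. You will need a different test point, and you will have to argue separately that any extra real zeros of $\Delta$ come only from coincidences $P_i=P_j$ (each of which, via $\sigma_j\sigma_i$ being a nontrivial rotation, pins $P$ to an isolated point) and hence do not add a curve component to the locus. This also means the proposition, read literally, needs a genericity clause excluding those isolated degenerations --- worth noting explicitly.

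\smallskip
\noindent\textbf{The missed shortcut.} You propose to obtain $P_k=\alpha_k p+\beta_k$ by unwinding the flank recursion step by step, and you flag this as the main obstacle. It is not necessary: Lemma~\ref{lem:refl-apices} hands you $P_k=\sigma_k\!\circ\sigma_1(P)$ in one stroke, so $\alpha_k$ is the cube root of unity $e^{2\pi i(k-1)/3}$ (since opposite edges of $\H$ are parallel, the linear parts repeat with period~$3$) and the $\beta_k$ are explicit. This both removes the bookkeeping you were worried about and makes the symmetry under $\tau$ transparent, because $\tau$ commutes with $\sigma_1$ and permutes the $\sigma_k$.
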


\begin{figure}
    \centering
    \includegraphics[width=\textwidth]{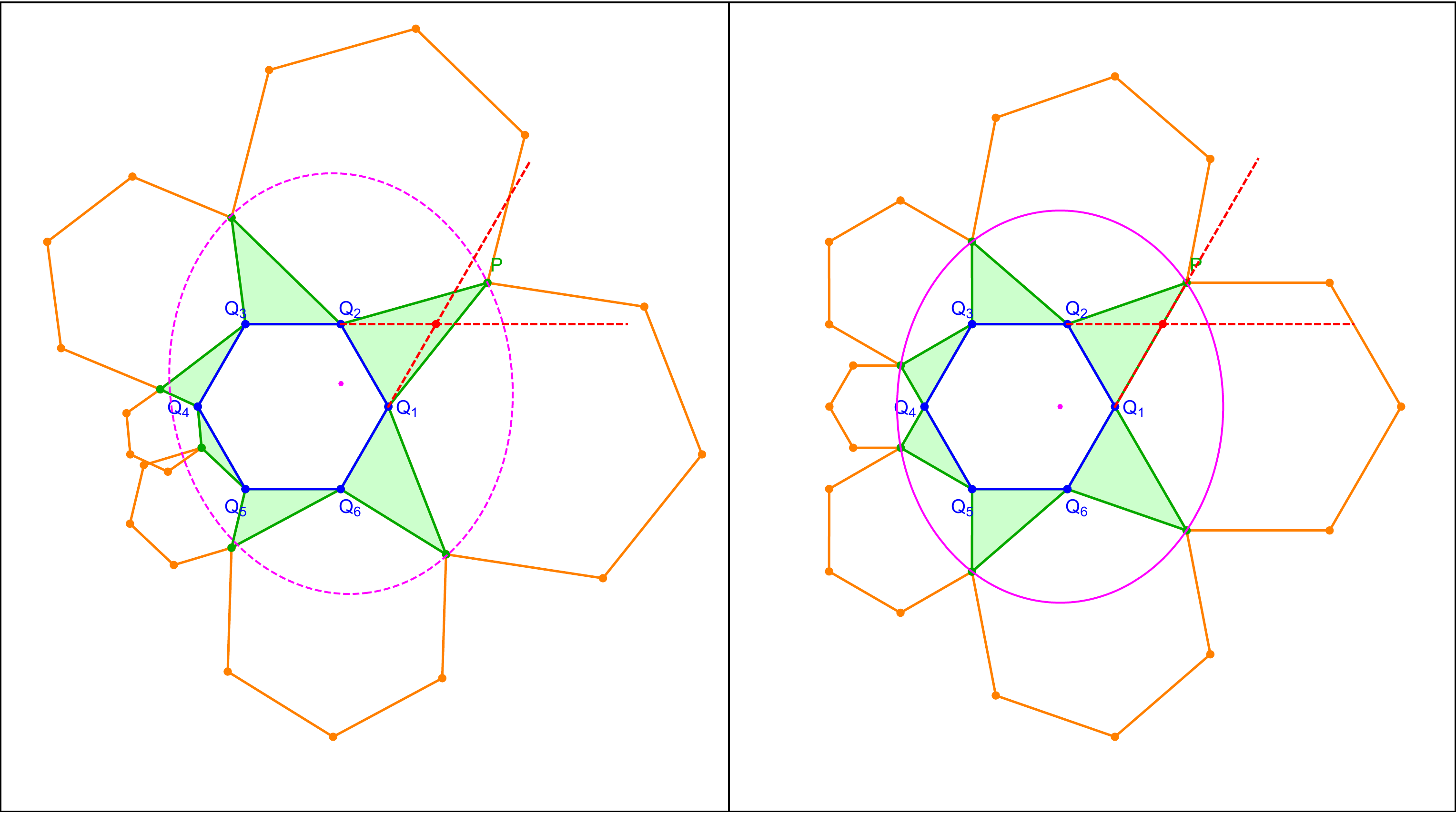}
    \caption{The apexes of the 6 satellite triangles (green) around a fixed hexagon (blue) lie on a conic iff $P$ is on either line $Q_3 Q_2$ or line $Q_6 Q_1$. When $P$ is at the intersection of said lines, the 6 apexes are concyclic, as in \cref{fig:satellite}(left).}
    \label{fig:conic}
\end{figure}

Referring to \cref{fig:iso-hex}, consider the 6 auxiliary regular hexagons erected successfully on the 6 satellite triangles parametrized by $P$. Let $O$ denote the intersection of $Q_3 Q_2$ and $Q_6 Q_1$. 

\begin{figure}
    \centering
    \includegraphics[width=.8\textwidth,frame]{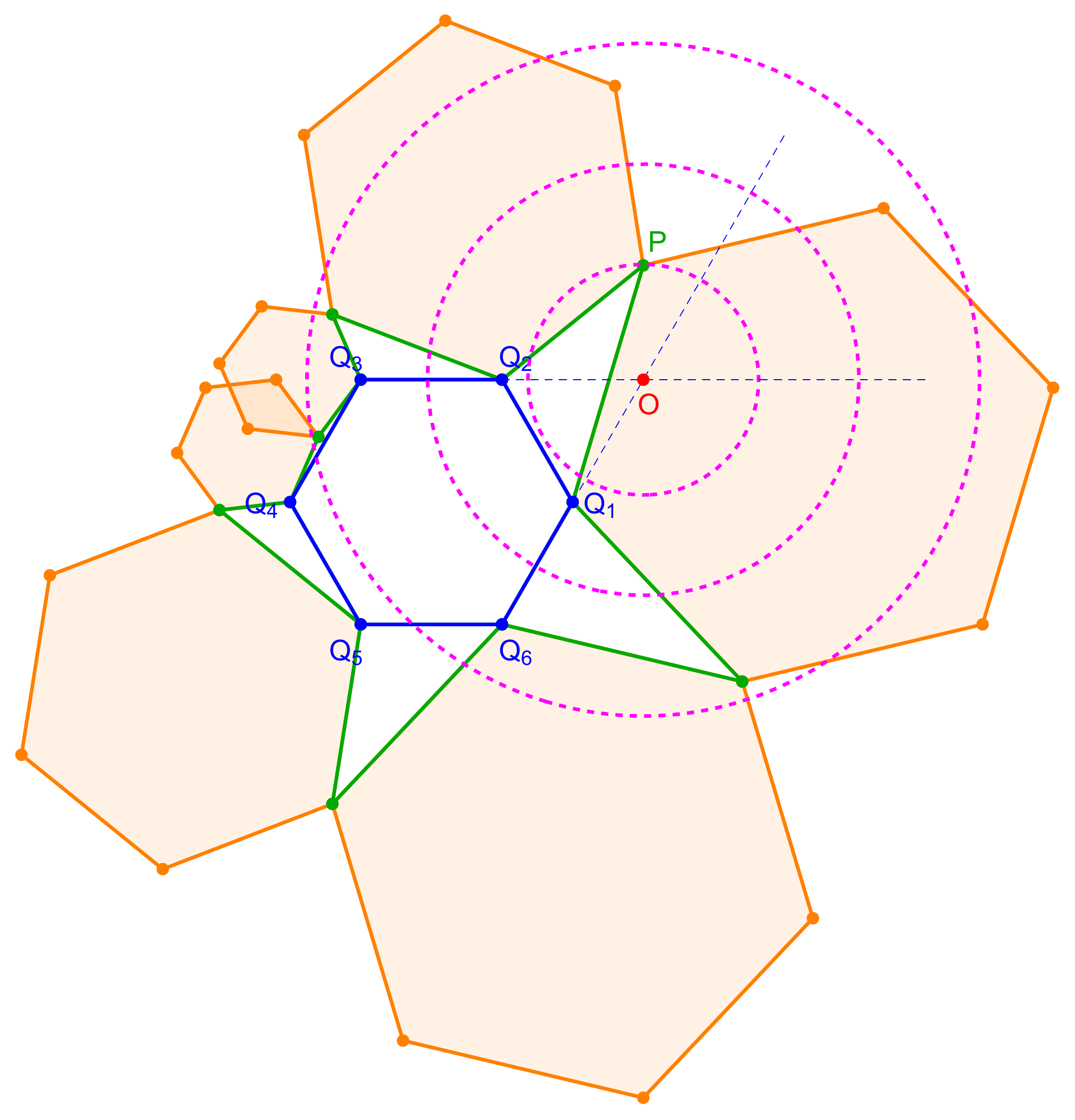}
    \caption{The iso-curves of $P$ such that the sum of areas of the satellite hexagons is constant are circles (dashed magenta) centered on $O$, at the intersection of the two sides of $\H$ adjacent to the base of $P Q_1 Q_2$.}
    \label{fig:iso-hex}
\end{figure}

\begin{proposition}
The iso-curves of $P$ such that the sum of the areas of the 6 satellite hexagons is constant are circles centered on $O$.
\end{proposition}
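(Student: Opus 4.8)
The plan is to coordinatise: identify the plane with $\mathbb{C}$, freeze $\H$ so that its vertices $Q_1,\dots,Q_6$ are fixed complex numbers, and write $z$ for the variable point $P$. The first step is to track how the construction depends on $z$. Starting from $F_1=z\,Q_1Q_2$ and unwinding, each passage from a satellite triangle $F_{i-1}$ to the regular hexagon erected on one of its sides, and thence to $F_i$, composes a rotation through $\pm120^{\circ}$ about a vertex of $\H$; invoking \cref{lem:refl-apices} and \cref{prop:snap} — which ensure the six satellites close up into a single ring around $\H$, uniquely determined by any one of them — one finds by induction that the apex of $F_i$ equals $\mu_i z+\nu_i$ for constants $\mu_i,\nu_i$ with $|\mu_i|=1$, and that the $i$-th auxiliary hexagon is erected on a side of $F_i$ joining such an apex to a vertex of $\H$. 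Hence that hexagon has side length $\ell_i=|z-w_i|$ for a point $w_i\in\mathbb{C}$ depending only on $\H$; in particular each $\ell_i$ is the modulus of a holomorphic affine function of $z$.

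Granting this, a regular hexagon of side $\ell$ having area $\tfrac{3\sqrt3}{2}\ell^{2}$, the quantity in question is, writing $W:=\sum_{i=1}^{6}w_i$,
\[
S(P)=\frac{3\sqrt3}{2}\sum_{i=1}^{6}|z-w_i|^{2}=9\sqrt3\,\Bigl|z-\tfrac{W}{6}\Bigr|^{2}+\mathrm{const}.
\]
The degree-two part is the positive multiple $9\sqrt3\,(x^{2}+y^{2})$, with no $xy$ or $x^{2}-y^{2}$ term — this is exactly what makes the level sets circles rather than ellipses, and it is the one-degree-up analogue of the cancellation of linear terms behind the earlier fact that the six satellite \emph{triangles} have constant total area. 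So $\{S=\mathrm{const}\}$ is a family of concentric circles whose common centre is the centroid $W/6$ of the six points $w_1,\dots,w_6$.

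It remains to identify $W/6$ with $O$. The reflection $\tau$ in the perpendicular bisector $m$ of $Q_1Q_2$ is a symmetry of the regular hexagon $\H$ that fixes the side $Q_1Q_2$ and interchanges the two adjacent sides $Q_6Q_1$ and $Q_2Q_3$. Since the ring of six satellites (and of their six auxiliary hexagons) is determined by its satellite at side $Q_1Q_2$, the isometry $\tau$ carries the ring built on $P$ onto the ring built on $\tau(P)$; it therefore permutes the six auxiliary hexagons, preserving their sizes, so $S\circ\tau=S$ and the centre $W/6$ lies on $m$. As $\tau$ fixes $O=Q_6Q_1\cap Q_2Q_3$, the point $O$ lies on $m$ as well. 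To pin the centre down to $O$ along $m$, I would finally evaluate the $w_i$ explicitly — each is a vertex of $\H$ rotated about another vertex of $\H$ through $\pm120^{\circ}$, so $W$ is an explicit $\mathbb{C}$-linear combination of $Q_1,\dots,Q_6$ — and check that $W=6O$; equivalently, and more cheaply, evaluate $S$ at two convenient positions of $P$ on $m$ and read off from the normal form above that its vertex sits at $O$.

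The step I expect to be the main obstacle is the first one: unwinding the iteration while keeping the orientation bookkeeping straight — the cyclic labelling of the six satellites, the sense of each $\pm120^{\circ}$ rotation, and which of the three sides of each $F_i$ carries its auxiliary hexagon. It is precisely this orientation data — which is \emph{not} invariant under a cyclic relabelling of the six sides of $\H$ — that breaks the hexagon's rotational symmetry and sends the common centre to $O$ rather than to the centre of $\H$. Once the affine maps are fixed, their orientation-preserving character (hence the circularity) and the final determination of the centre are routine, and could alternatively be confirmed by placing a concrete regular hexagon in coordinates and computing.
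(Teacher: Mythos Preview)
The paper states this proposition without proof, so there is no reference argument to compare against. That said, the one-line proof the paper gives for the \emph{next} proposition (attributed to Akopyan) contains precisely the insight that would make your argument both complete and much shorter, so it is worth pointing out.

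Your strategy --- show each auxiliary hexagon has side $|z-w_i|$ for a fixed $w_i$, sum the squared moduli, read off concentric circles --- is sound, and the symmetry argument placing the centre on the perpendicular bisector of $Q_1Q_2$ is correct. But you leave the two substantive parts (the inductive tracking of apices with its $\pm120^\circ$ orientation bookkeeping, and the final identification of the centre with $O$) as promised computations, and the step you yourself flag as ``the main obstacle'' is exactly where a cleaner idea bypasses the bookkeeping entirely.

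Here is the shortcut. By \cref{lem:refl-apices}, the apex $P_i$ of every satellite $F_i$ is the reflection, across the side $Q_iQ_{i+1}$ of $\H$, of a \emph{single} point $P'$ (namely the reflection of $P=P_1$ across $Q_1Q_2$). The $i$-th auxiliary hexagon is erected on the segment $P_iQ_{i+1}$; since $Q_{i+1}$ lies on the mirror, $|P_iQ_{i+1}|=|P'Q_{i+1}|$. Hence the six side lengths are exactly the six distances $|P'-Q_j|$, and
\[
S(P)=\frac{3\sqrt3}{2}\sum_{j=1}^{6}|P'-Q_j|^{2}=9\sqrt3\,|P'-G|^{2}+\text{const},
\]
with $G$ the centre of $\H$. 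The level curves are therefore circles in $P'$ centred at $G$, hence circles in $P$ centred at the reflection of $G$ across $Q_1Q_2$ --- which in a regular hexagon is precisely $O=Q_3Q_2\cap Q_6Q_1$. No inductive unwinding, no orientation choices, and the centre is pinned down immediately rather than deferred to a coordinate check.

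This is exactly the remark the paper records when proving the following proposition: ``their area is the sum of squares of distances from the reflected point $A'$ to vertices $Q_i$ of the central hexagon.'' In your language, after conjugating by the reflection $P\mapsto P'$, your six mystery points $w_i$ are simply the vertices $Q_j$ of $\H$.
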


\subsection{Second-level satellites}

Referring to \cref{fig:second-level}, consider the 6 satellite flanks $F_i$ surrounding a central hexagon which are obtained as above by sequentially erecting 6 regular hexagons $\H_i$. Consider 6 ``2nd-level'' flanks $F_i'$ nestled between the $\H_i$. Let $\H_k'$ denote a hexagon whose vertices are the $X_k$ of the $F_i'$.

\begin{proposition}
For all $X_k$ on the Euler line, $\H_k'$ has invariant internal angles.
\end{proposition}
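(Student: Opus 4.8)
The plan is to pass to complex-number coordinates, in which erecting a regular hexagon and cutting off a flank are \emph{complex-linear} operations, compute the six second-level flanks $F_i'$ explicitly as functions of the free point $P$, and then reduce the whole statement to one lemma about their circumcenters and centroids. Concretely, place the central regular hexagon at $Q_j=\omega^{\,j}$, $j=0,\dots,5$, with $\omega=e^{i\pi/3}$, and write $P=p\in\Cp$. Erecting a regular hexagon on a directed segment $[u,v]$ produces new vertices that are fixed $\Cp$-linear combinations of $u,v$ (consecutive edges are $60^{\circ}$ rotations of $v-u$), and the flank triangle at a vertex of a triangle is again a fixed $\Cp$-linear image of that triangle's three vertices — only sixth roots of unity occur, equivalently the relation $w^{2}+w+1=0$ with $w=\omega^{2}$, exactly as in the proof that $X_{16}$ is common to $\T$ and its flanks. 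Composing these maps, every vertex of every satellite $F_i$, of every satellite hexagon $\H_i$, and finally of every $F_i'$ is an \emph{affine} function $\alpha p+\beta$ of $p$; write $F_i'(p)=\bigl(w_{i,1}(p),w_{i,2}(p),w_{i,3}(p)\bigr)$, each $w_{i,j}$ affine in $p$. The coefficients are highly constrained: the $6$-fold symmetry of $\H$ and the closure rigidity of \cref{lem:refl-apices,prop:snap} impose many linear relations among them, and those relations drive the rest of the argument.

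Next, recall that every triangle satisfies $X_4=3X_2-2X_3$, so a center lying on the Euler line — which meets it at a ratio $t$ independent of the triangle, $X=(1-t)X_3+tX_4$ — equals $(1-3t)X_3+3t\,X_2$; hence $X\!\left(F_i'(p)\right)=(1-3t)\,o_i(p)+3t\,g_i(p)$, where $o_i,g_i$ are the circumcenter and centroid of $F_i'(p)$. The claim then follows from:

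\emph{Key Lemma.} There are fixed complex numbers $\gamma_1,\dots,\gamma_6$ (not all equal) and functions $C_o(p),D_o(p),C_g(p),D_g(p)$ such that $o_i(p)=C_o(p)+\gamma_i D_o(p)$ and $g_i(p)=C_g(p)+\gamma_i D_g(p)$ for all $i$.

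Granting it, $X(F_i'(p))=\bigl[(1-3t)C_o+3tC_g\bigr](p)+\gamma_i\,\bigl[(1-3t)D_o+3tD_g\bigr](p)$, so the hexagon $\H_k'=(X(F_i'))_{i}$ is the image of the fixed hexagon $(\gamma_i)_i$ under one similarity $z\mapsto(\text{slope})\,z+(\text{shift})$; therefore its internal angle at vertex $i$ equals $\arg\frac{\gamma_{i+1}-\gamma_i}{\gamma_{i-1}-\gamma_i}$, independent of $p$ (and indeed of $k$). Off the finitely many $p$ where the slope vanishes — and the hexagon degenerates — the angles extend by continuity. This also pinpoints the role of the Euler line: $o$, $g$, and hence $h$, acquire the common shape $C(p)+\gamma_i D(p)$ with a \emph{common} $\gamma_i$, whereas a center such as $X_{1}$ or $X_{15}$, not an affine function of $X_2,X_3,X_4$, need not.

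It remains to prove the Key Lemma, and its circumcenter half is the main obstacle. The centroid half is routine: $g_i(p)=\tfrac13\sum_j w_{i,j}(p)=A_ip+B_i$ is affine, and ``$g_i=C_g+\gamma_iD_g$'' is just the statement that the ``velocity'' hexagon $(A_i)_i$ and the ``base'' hexagon $(B_i)_i$ are directly similar — a finite identity among the explicit coefficients, which should fall out of the Step~1 relations (take $\gamma_i:=A_i$). For the circumcenters one must work harder, because a priori $o_i(p)$ is a genuine rational function of both $p$ and $\bar p$: solve the perpendicular-bisector system $2\Re\bigl(\bar o_i(w_{i,1}-w_{i,2})\bigr)=|w_{i,1}|^{2}-|w_{i,2}|^{2}$ together with its cyclic companion for $o_i(p)$, then verify that, after substituting the Step~1 relations, the entire $p,\bar p$ dependence collapses to $C_o(p)+\gamma_i D_o(p)$ with the \emph{same} $\gamma_i$ as the centroids. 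I expect this collapse to be the delicate point; a computer-algebra check on the explicit coefficients would simultaneously discover the $\gamma_i$ and certify the two identities. (Should it happen that the six $F_i'(p)$ are themselves directly similar to one fixed triangle, then every $o_i,g_i$ is automatically affine in $p$ and only the sub-hexagon-similarity identity survives; but the restriction to the Euler line in the statement suggests the $F_i'(p)$ really do change shape with $p$, so the circumcenter computation cannot be avoided.)
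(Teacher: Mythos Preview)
Your outline is sensible up to the Key Lemma, but that lemma is never proved --- both halves are deferred to a computation you have not carried out --- and in fact the ``routine'' centroid half is \emph{false} in the form you state it. Placing the central hexagon at $Q_j=\omega^{j}$ with $\omega=e^{i\pi/3}$ and writing $A'=a$ for the common reflected apex of \cref{lem:refl-apices}, the second-level flank $F_j'$ has vertices
\[
P_j=-\omega^{2j+1}\bar a+(1{+}\omega)\omega^{j},\qquad
W_j=-(1{+}\omega)\omega^{2j+1}\bar a+2\omega^{j+1},\qquad
U_j=-(1{+}\omega)\omega^{2j}\bar a+2\omega^{j},
\]
and hence centroid $g_j=-\tfrac{4}{3}\omega^{2j+1}\bar a+(1{+}\omega)\omega^{j}$. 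The ``velocity'' hexagon $(\omega^{2j+1})_{j}$ is a triangle traced twice, while the ``base'' hexagon $((1{+}\omega)\omega^{j})_{j}$ is a genuine regular hexagon; these are not directly similar, so there is no choice of $(\gamma_i)$ with $g_j=C_g+\gamma_jD_g$. Concretely, at $a=0$ the centroid hexagon is regular, while for $|a|$ large it degenerates toward a twice-traced triangle --- its internal angles move with $P$. So the very first instance of your Key Lemma already fails, and with it the argument.

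This strongly suggests you have misread the word ``invariant.'' The figure caption displays several $\H_k'$ for a \emph{fixed} configuration and asserts they have identical internal angles: the claim is invariance across $k$ (for $X_k$ on the Euler line), not across $P$. The paper's argument --- a one-line remark of Akopyan --- does not coordinatise at all; it appeals directly to the reflection structure of \cref{lem:refl-apices}, pointing out that the auxiliary hexagons $\H_i$ have sidelengths $|A'Q_i|$ (so that their areas are the squared distances $|A'Q_i|^2$), and treats the conclusion as following from that. Whatever one thinks of the terseness of that hint, it is aimed at a different statement than the one you are trying to prove, and your affine-collapse strategy cannot recover it because the collapse does not happen.
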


We thank A. Akopyan for the following argument \cite{akopyan2021-private}:

\begin{proof}
This follows from the fact that their area is the sum of squares of distances from the reflected point $A'$ to vertices $Q_i$ of the central hexagon.
\end{proof}

\begin{figure}
    \centering
    \includegraphics[trim=120 0 150 0,clip,width=.8\textwidth,frame]{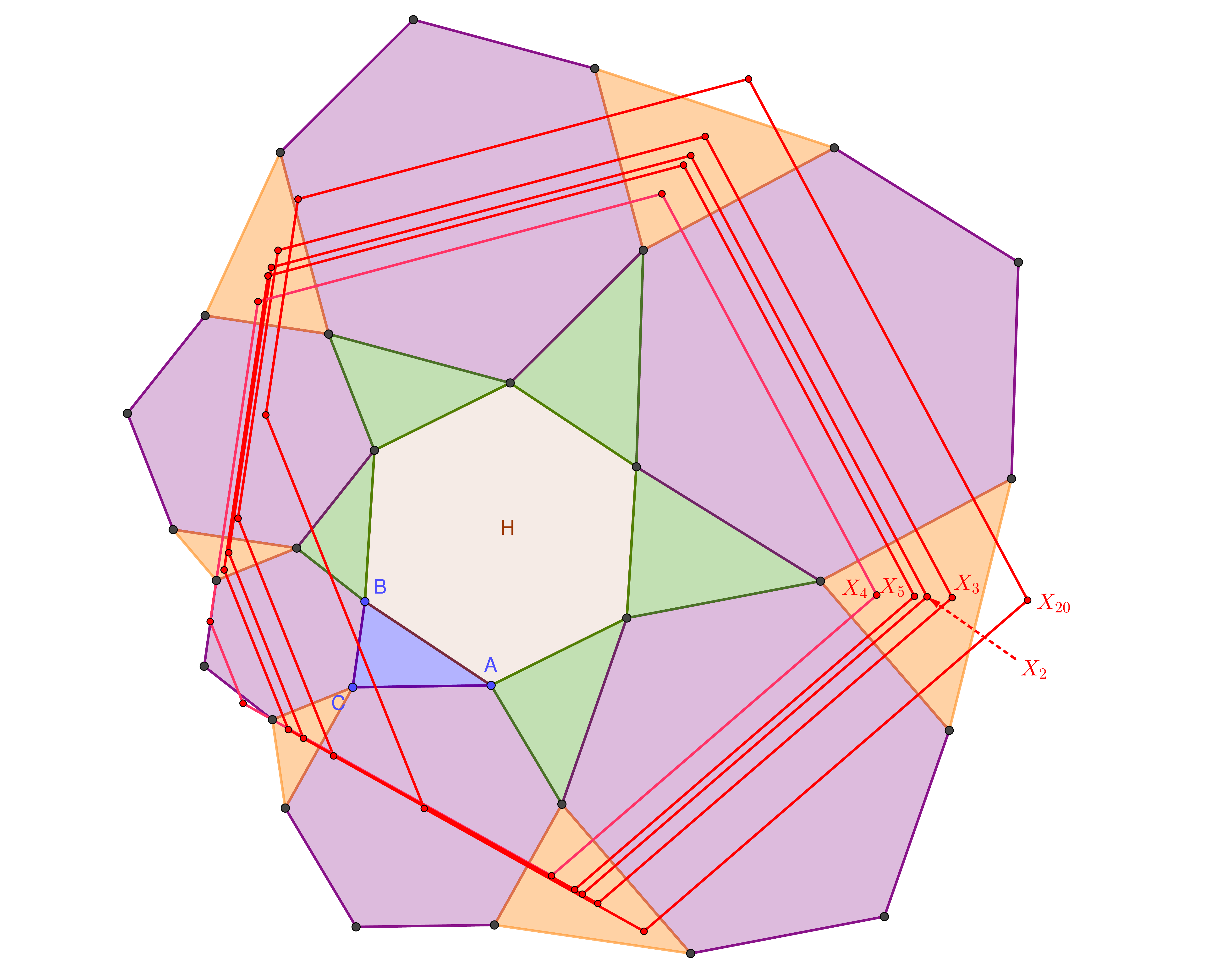}
    \caption{Shown are 5 hexagons $\H_i'$ (red) whose vertices are the $X_k$ of second-level satellites (yellow). If $X_k$ is on the Euler line, the $\H_i'$ have identical internal angles.}
    \label{fig:second-level}
\end{figure}

\subsection{Properties of the Second Fermat Point}

The second Fermat point $X_{14}$ of a triangle is the isogonal conjugate of the second isodynamic point $X_{16}$ \cite[Fermat Points]{mw}. Let $\H=ABCDEF$ be a regular hexagon with centroid $O$, and let $P$ be a point anywhere. Define six ``inner'' triangles $\T_1=ABP$, $\T_2=BCP$, ..., $\T_6=FAP$. Referring to \cref{fig:x14}(left):
The second Fermat point $X_{14}$ of a triangle is the isogonal conjugate of the second isodynamic point $X_{16}$ \cite[Fermat Points]{mw}. Let $\H=ABCDEF$ be a regular hexagon with centroid $O$, and let $P$ be a point anywhere.  $\T_1=ABP$, $\T_2=BCP$, ..., $\T_6=FAP$. Referring to \cref{fig:x14}(left):

\begin{proposition}
The $X_{14}$ of the $\T_i$ will lie on $PO$.
\end{proposition}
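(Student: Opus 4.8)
The plan is to reduce the claim to the classical cevian construction of the isogonic (Fermat) points, exploiting the elementary fact that in a regular hexagon each side together with the centre $O$ spans an equilateral triangle. Write $Q_1,\dots ,Q_6$ for the vertices $A,\dots ,F$ of $\H$ (indices mod $6$), so that $\T_i = Q_iQ_{i+1}P$. Since $\H$ is regular one has $|OQ_i| = |OQ_{i+1}| = |Q_iQ_{i+1}|$, hence $OQ_iQ_{i+1}$ is equilateral, and $O$ lies on the interior side of the line $Q_iQ_{i+1}$ at the perpendicular distance $R\sqrt{3}/2$ (the apothem), which is exactly the height of that equilateral triangle.

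Next I would recall that the second Fermat point $X_{14}$ of a triangle $UVW$ is obtained by erecting on each side the equilateral triangle lying on the same side of that side-line as the interior of $UVW$, and intersecting the three cevians that join each vertex to the apex of the equilateral on the opposite side. Applying this to $\T_i$ with the distinguished side $Q_iQ_{i+1}$: the equilateral erected on $Q_iQ_{i+1}$ towards the interior of $\H$ is precisely $OQ_iQ_{i+1}$, whose apex is $O$ itself. Hence the Fermat cevian of $\T_i$ issuing from the vertex $P$ is exactly the line $PO$, so $X_{14}(\T_i)\in PO$. Because the \emph{same} line $PO$ serves as such a cevian for every $i=1,\dots ,6$, all six second Fermat points lie on the single line $PO$, which is the assertion.

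The one step that needs care --- and the main obstacle --- is the orientation bookkeeping guaranteeing that the cevian $PO$ really selects the \emph{second} Fermat point and not the first: the equilateral $OQ_iQ_{i+1}$ is erected towards $O$, so it is the ``inner'' equilateral of $\T_i$ (the one producing $X_{14}$ rather than $X_{13}$) precisely when $P$ and $O$ lie on the same side of the line $Q_iQ_{i+1}$. This holds for all six sides simultaneously when $P$ is interior to $\H$, and in general after adopting the signed/directed conventions already in force for $X_{16}$ elsewhere in this paper. To bypass this discussion entirely one can verify the statement analytically: place $O$ at the origin, set $Q_k = R\zeta^{k}$ with $\zeta = e^{i\pi/3}$, and check from the complex-number expression for the second Fermat point that $X_{14}(\T_i)$ is a real scalar multiple of the affix $p$ of $P$ --- equivalently, that $O$, $P$, $X_{14}(\T_i)$ are collinear --- which also makes the independence of $i$ transparent.
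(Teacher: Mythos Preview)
The paper states this proposition without proof, so there is no argument to compare against. Your synthetic reduction is the natural and correct one: because $\H$ is regular, the centre $O$ is the apex of an equilateral triangle on every side $Q_iQ_{i+1}$, so the line $PO$ is automatically one of the three Napoleon--Fermat cevians of $\T_i=Q_iQ_{i+1}P$ and hence contains one of its two isogonic centres. That already forces all six points onto the single line $PO$, which is the substance of the claim.

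The orientation caveat you flag is real but not a defect. When $P$ is interior to $\H$, the apex $O$ lies on the same side of $Q_iQ_{i+1}$ as $P$ for every $i$, so $O$ is the \emph{inward} Napoleon apex and the cevian $PO$ selects $X_{14}$ rather than $X_{13}$, exactly as you say. For general $P$ the uniform statement follows because $X_{14}$, taken in its algebraic (barycentric) definition, depends algebraically on $P$: the collinearity $X_{14}(\T_i)\in PO$ is then a polynomial identity in the coordinates of $P$ which holds on the open set $\mathrm{int}(\H)$ and therefore identically. Your proposed complex-number verification with $Q_k=R\zeta^k$ is an equally clean way to sidestep the case analysis. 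The only improvement for a finished write-up would be to actually carry out one of these two analytic checks rather than merely pointing to it.
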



Let $\T_i'$ be the six triangles with (i) the base a side of $\H$, and (ii) the apex $P_i'$ the reflection of $P$ about said side. Assume in this case $P$ is interior to $\H$. Referring to \cref{fig:x14}(right):

\begin{proposition}
The $X_{14}$ of the $\T_i'$ lie on a rectangular hyperbola (green) concentric with $\H$.
\end{proposition}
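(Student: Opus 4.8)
The plan is to do everything in complex coordinates, with the centroid $O$ of $\H$ placed at the origin and vertices $V_k=\omega^k$, $k=0,\dots,5$, where $\omega=e^{i\pi/3}$ (so $1+\omega=\sqrt3\,e^{i\pi/6}$ and $|\omega-1|=1$). The starting observation is that the triangle $\T_i'$ resting on the side $V_kV_{k+1}$ of $\H$ is the mirror image, across the line $V_kV_{k+1}$, of the triangle $\T_k:=V_kV_{k+1}P$. Since $X_{14}$ is produced from a triangle by a recipe that is equivariant under every isometry (erect the \emph{inward} equilateral on each side, join its apex to the opposite vertex, intersect the three lines), we get $w_k:=X_{14}(\T_i')=\sigma_k\bigl(X_{14}(\T_k)\bigr)$, where $\sigma_k$ is the reflection in the line $V_kV_{k+1}$; concretely $\sigma_k(z)=2M_k-\omega^{2k+1}\bar z$ with $M_k=\tfrac12(V_k+V_{k+1})$ (using $(\omega-1)^2=-\omega$). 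So the problem reduces to locating $u_k:=X_{14}(\T_k)$, a triangle with a single free vertex.

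Here I would invoke the key fact — the same one behind the preceding proposition — that for $P$ interior to $\H$ the inward equilateral triangle on the side $V_kV_{k+1}$ of $\T_k$ has apex exactly $O$, because $OV_kV_{k+1}$ is equilateral and $O$ lies on the same side of $V_kV_{k+1}$ as $P$. Thus one of the lines defining $u_k$ is the line $OP$ (this is exactly ``$X_{14}(\T_k)$ lies on $OP$''), so $u_k=s_kP$ with $s_k\in\R$; a second is the line through $V_k$ and the inward apex $V_k+\omega P$ of the side $V_{k+1}P$, i.e. the line through $V_k$ in direction $\omega P$. Intersecting these yields $s_k=\tfrac{2}{\sqrt3}\,\operatorname{Re}\!\bigl(e^{i\pi/6}\omega^{k}/P\bigr)$, and substituting $u_k=s_kP$ into $w_k=2M_k-\omega^{2k+1}\bar u_k$ (with $2M_k=\sqrt3\,e^{i\pi/6}\omega^{k}$) collapses the expression to the clean normal form
\[
w_k=\tfrac{2}{\sqrt3}\,e^{i(2k+1)\pi/6}-\tfrac{i(-1)^k}{\sqrt3}\cdot\frac{\bar P^{\,2}}{|P|^2}\qquad(P\neq O):
\]
a point running over a small regular hexagon about $O$, perturbed by a $\pm$ vector of $k$-independent modulus.

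To conclude I would use the fact that a conic with center $O$ is a rectangular hyperbola — allowing a degenerate pair of perpendicular lines through $O$ — precisely when its equation has the form $\operatorname{Re}(\alpha z^2)=\text{const}$ with $\alpha\neq0$ (no $z\bar z$ term, and the $x^2$- and $y^2$-coefficients of $\operatorname{Re}(\alpha z^2)$ sum to zero). So it suffices to produce one $\alpha\neq0$ for which $\operatorname{Re}(\alpha w_k^2)$ is independent of $k$. Squaring, $w_k^2$ is a $k$-independent term $-\tfrac13(\bar P^2/|P|^2)^2$ plus $\tfrac43 e^{i\pi/3}\omega^{2k}$ plus a cross term equal to $-\tfrac{4i}{3}e^{i\pi/6}(\bar P^2/|P|^2)\,\omega^{-2k}$; the two variable pieces are $3$-periodic in $k$, and — this is where $\lvert\bar P^2/|P|^2\rvert=1$ enters — after replacing $\operatorname{Re}(w)$ by $\operatorname{Re}(\bar w)$ in the cross term their coefficients have equal modulus. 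Hence one can choose $\alpha$ of unit modulus with $\alpha/\bar\alpha$ equal to the appropriate conjugated ratio of those coefficients (one checks $\alpha=iP/|P|$ works), which turns the combined variable contribution into $\operatorname{Re}(\beta\,\omega^{2k})$ with $\beta=0$, via $1+\omega^2+\omega^4=0$; then $\operatorname{Re}(\alpha w_k^2)$ is constant, so all six $w_k$ lie on the rectangular hyperbola $\operatorname{Re}(iP\,z^2)=\text{const}$, which is centered at $O$.

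I expect the main obstacle to be the middle step: honestly pinning down $X_{14}$ of the free-vertex triangle $\T_k$ (identifying two of its defining lines explicitly) and then carrying the $12$th-root-of-unity bookkeeping through to the normal form for $w_k$ without sign slips. Once $w_k$ is in that form the final cancellation is essentially forced by the shape of an origin-centered rectangular hyperbola. I would also flag that $P=O$ is a genuine exception (there the six apexes are the vertices of a regular hexagon, which lies on no origin-centered rectangular hyperbola, degenerate or not), and that for $P$ on a mirror axis of $\H$ the hyperbola degenerates to a pair of perpendicular lines through $O$.
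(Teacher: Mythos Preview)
The paper states this proposition without proof, so there is nothing to compare against directly; your plan has to stand on its own, and it does. Placing $O$ at the origin with $V_k=\omega^k$, reducing to the inner triangles $\T_k$ via the reflection $\sigma_k$, and then exploiting the previous proposition to write $u_k=s_kP$ is exactly the right decomposition. Your identification of the second Fermat line through $V_k$ is also correct: for $P$ interior to $\H$ each $\T_k=V_kV_{k+1}P$ is positively oriented, and rotating $P$ about $V_{k+1}$ by $+\pi/3$ lands on the \emph{inward} side, so the apex is indeed $V_k+\omega P$ and the line has direction $\omega P$. I checked your formulas for $s_k$ and for
\[
w_k=\tfrac{2}{\sqrt3}\,e^{i(2k+1)\pi/6}-\tfrac{i(-1)^k}{\sqrt3}\,\frac{\bar P^{\,2}}{|P|^2}
\]
and they come out exactly as you state. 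With $\alpha=iP/|P|$ one finds $\alpha/\bar\alpha=-e^{2i\arg P}$, which makes the coefficient $\beta$ of $\omega^{2k}$ in $\operatorname{Re}(\alpha w_k^2)$ vanish identically, so all six $w_k$ satisfy $\operatorname{Re}(\alpha z^2)=\tfrac13\sin(3\arg P)$, a rectangular hyperbola centred at $O$.

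Two small remarks on exposition. First, the appeal to $1+\omega^2+\omega^4=0$ is not really what forces $\beta=0$; that identity only tells you the three values of $\operatorname{Re}(\beta\,\omega^{2k})$ average to zero, whereas what you actually need (and what your choice of $\alpha$ delivers) is $\beta=0$ itself. Second, your degeneracy remark is slightly off: the hyperbola collapses to a pair of perpendicular lines exactly when $\sin(3\arg P)=0$, i.e.\ when $P$ lies on a \emph{long diagonal} $OV_k$, which is only half of the mirror axes of $\H$; for $P$ on a short axis (through an edge midpoint) the hyperbola stays nondegenerate. Neither point affects the validity of the argument.
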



\begin{figure}
\begin{subfigure}[m]{0.49\textwidth}
\includegraphics[trim=300 125 100 25,clip,width=\textwidth]{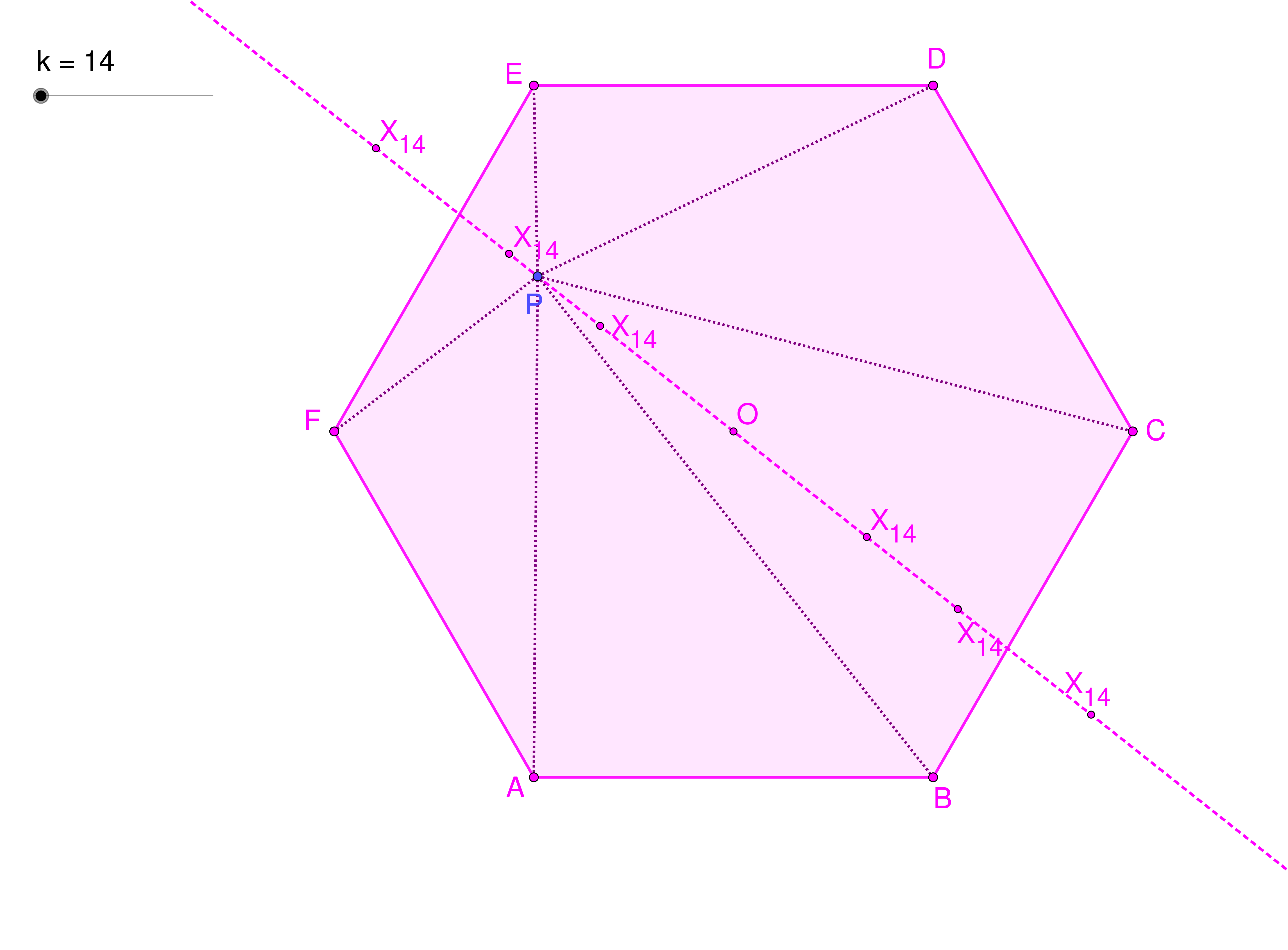}
\end{subfigure}
\begin{subfigure}[m]{0.49\textwidth}
\includegraphics[trim=200 0 0 0,clip,width=\textwidth]{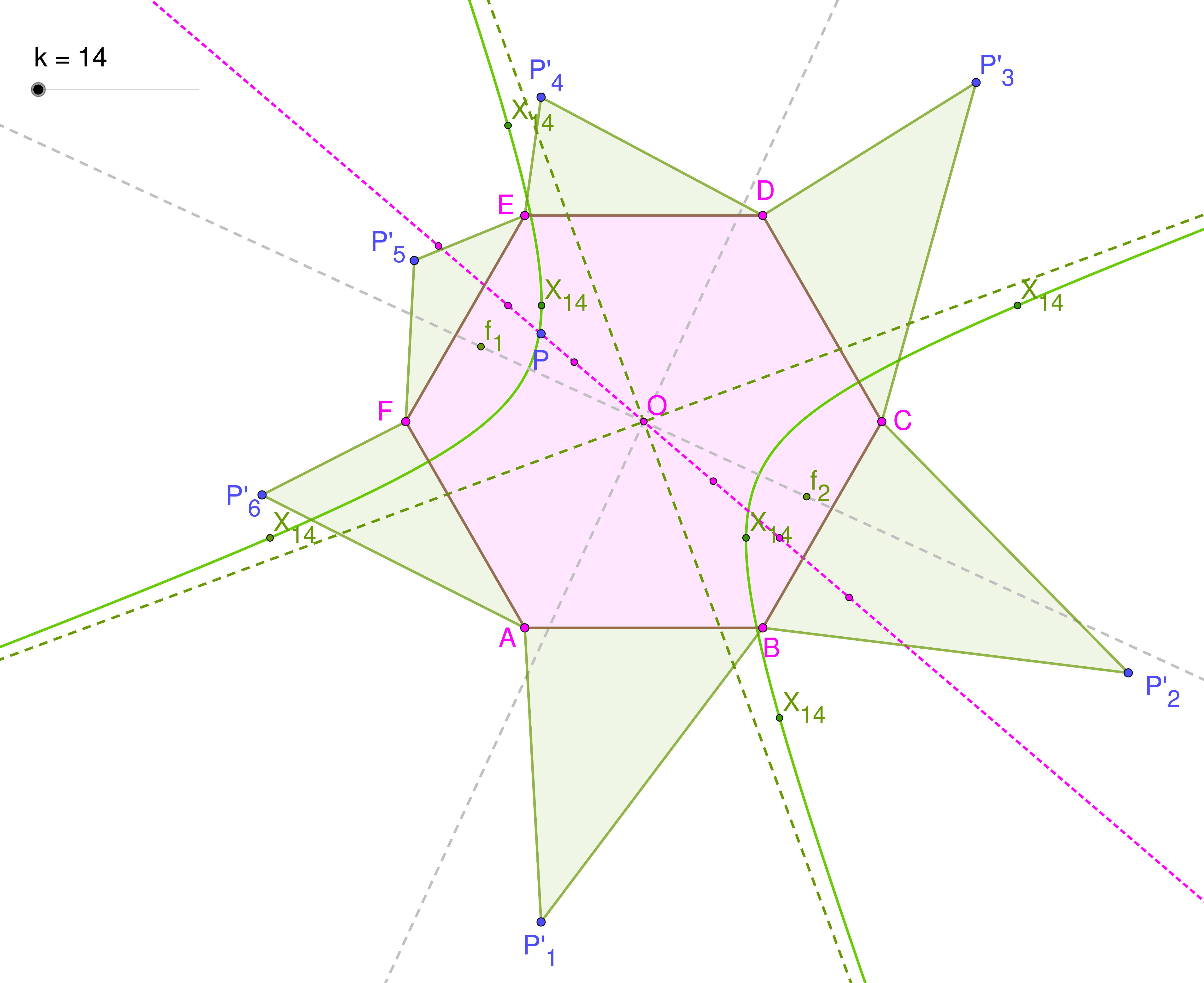}
\end{subfigure}
\caption{\textbf{Left:} Let $\H=ABCDEF$ be a regular hexagon, and $P$ a point. The second Fermat points $X_{14}$ of the ``inner'' triangles $\T_i\in\{ABP, BCP,\ldots FAP\}$ are collinear with $P$ and the centroid $O$ of $\H$. \textbf{Right:} Let $\T_i'$ be triangles with base a side of $\H$, and apex $P_i'$ the reflection of $P$ about said side. The $X_{14}$ of the $\T_i'$ lie on a rectangular hyperbola (green) concentric with $\H$. Also shown is the (dotted magenta) line of the $X_{14}$ of the inner triangles.}
\label{fig:x14}
\end{figure}

\section{A Web of Confocal Parabolas}
\label{sec:parabolas}
Referring to \cref{fig:parA}, let, let $\H_{i}$, $\H_{i+1}$, etc., be adjacent hexagons in the grid sharing antipodal vertices $U_i$, such that one of the $U_i$ is $A$. Let $\H'_i$, $\H'_{i+1}$, etc., be a second sequence of adjacent hexagons running along the same ``grain'' in the grid. The following was discovered by A. Akopyan \cite{akopyan2021-private}:

\begin{figure}
    \centering
    \includegraphics[width=\textwidth,frame]{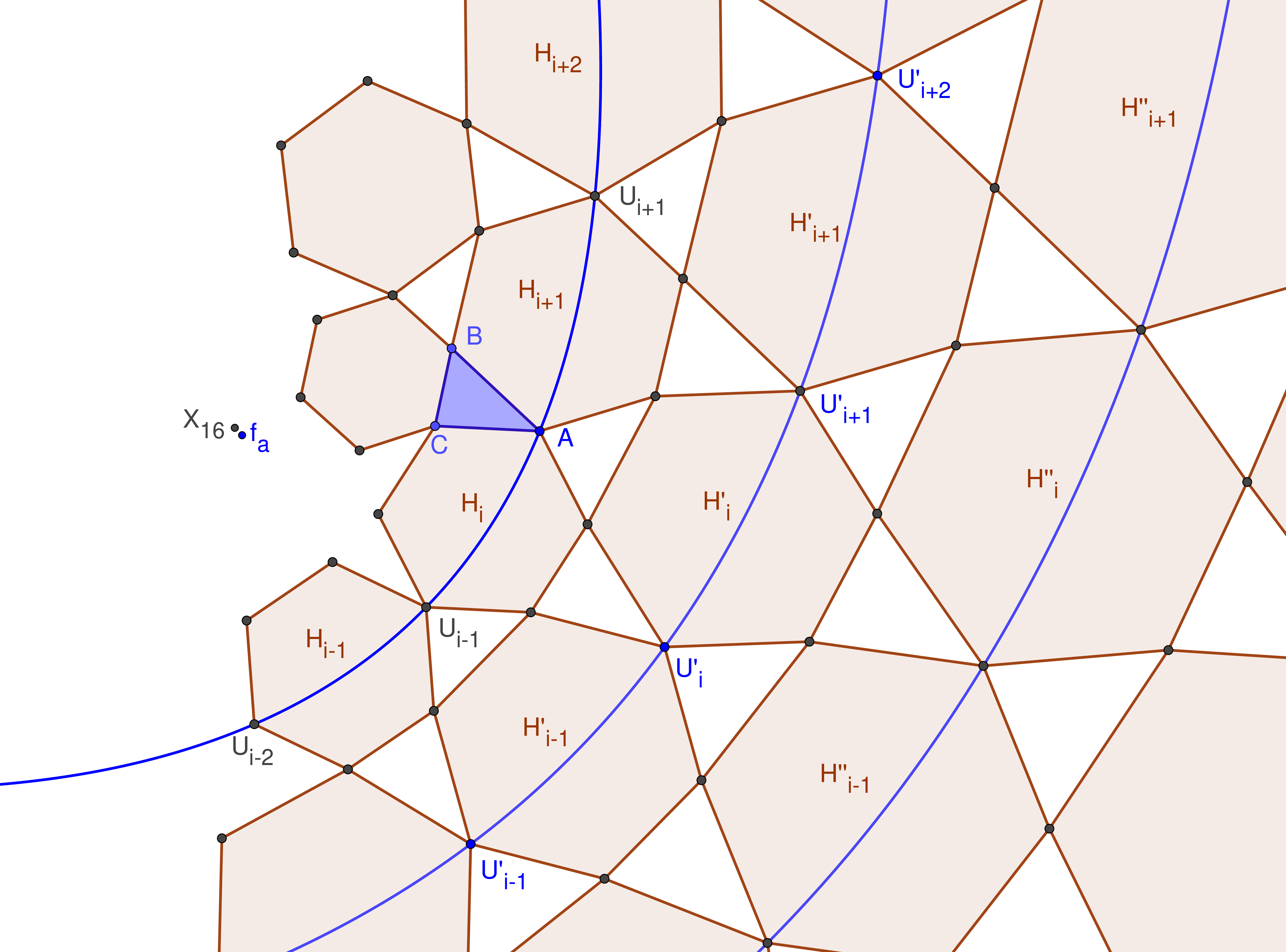}
    \caption{From hexagons $\H_{i-1}$, $\H_i$, etc., take antipodal vertices $U_{i-2}$, $U_{i-1}$, $A$, $U_{i+1}$. These lie on the $A$-parabola (blue), whose focus is $f_a$. Confocal parabolas pass through antipodal vertices along any other ``row'' of adjacent hexagons in the grid, e.g., $\H'_{i-1},\H'_i,\H_{i+},\ldots$.}
    \label{fig:parA}
\end{figure}

\begin{proposition}
The sequence $\ldots,U_{i-1},A,U_{i+1},\ldots$ lies on a parabola which we call the $A$-parabola. Furthermore, similarly-constructed sequences of vertices along hexagons in the same diagonal direction lie on parabolas confocal with the $A$-parabola at $f_a$.  
\end{proposition}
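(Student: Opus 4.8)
The plan is to reduce the statement to a normal form for the vertex sequence. A bi-infinite sequence of points $(U_i)$ lies on a parabola exactly when its second finite difference $U_{i+1}-2U_i+U_{i-1}$ is the same nonzero vector for all $i$ and is not parallel to the increments $U_{i+1}-U_i$; in that case one has $U_i = P_0 + i\,V + \tfrac{i(i-1)}{2}\,W$ for fixed vectors $P_0,V,W$, the curve is the parabola parametrized by $t\mapsto P_0 + tV + \tfrac{t(t-1)}{2}W$, and its axis is parallel to $W$. So the first goal is to show that, writing $w_i := U_i-U_{i-1}$ for the oriented ``diameter'' of $\H_i$ (the vector joining the two distinguished antipodal vertices $U_{i-1}$ and $U_i$), the $w_i$ form an arithmetic progression, $w_{i+1}-w_i\equiv W$.

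I would obtain this from the local description of the grid in \cref{sec:satellite}. The passage from one hexagon of a row to the next, across their shared antipodal vertex, is controlled by \cref{lem:refl-apices} and \cref{prop:snap}, which pin down both the turning angle (this is where $2\pi/3$ enters) and the length relation across the intervening flank triangle. Encoding each $\H_i$ by the complex number $w_i$ and rewriting the reflect-and-snap relations as identities among $w_i$, $w_{i+1}$ and the sides of the intervening triangle, the increments $w_{i+1}-w_i$ should collapse (after invoking the grid's cyclic closure identities) to one and the same vector $W$; this is exactly the sort of finite check that the closed forms in \cref{app:eqns} make routine. Summing the arithmetic progression gives the quadratic formula for $U_i$, hence the $A$-parabola, whose axis points along $W$. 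The focus $f_a$ is then extracted from this parametrization: complete the square via the shift $s=i-\tfrac12$ to write $U = C_0 + sV' + \tfrac{s^2}{2}W$, rotate $W$ to the vertical and remove the component of $V'$ along $W$, and apply the standard formula for the focus of a parabola in parametric form.

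For the confocal parabolas carried by the other rows running in the same diagonal direction, the same argument produces, for each such row $k$, a quadratic sequence with its own data $(P_0^{(k)},V^{(k)},W^{(k)})$. Since for parabolas ``confocal'' means having the same focus and the same axis line, it suffices to check two things: that the $W^{(k)}$ all point in a common direction (equivalently, that the second difference taken along any row in this diagonal direction yields the same direction, which is part of the same coordinate computation as above), and that the focus computed from $(P_0^{(k)},V^{(k)},W^{(k)})$ is independent of $k$ and equal to $f_a$. The latter I would establish by showing that replacing a row by a parallel one alters $P_0$ and $V$ precisely so as to leave the focus formula invariant, again a direct computation in the complex coordinates of the grid.

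The main obstacle is the passage in the second paragraph: converting the synthetic reflect-and-snap description of the grid into the clean assertion ``consecutive oriented diameters lie in arithmetic progression'', i.e.\ aligning the alternating flank/hexagon pattern with the $2\pi/3$ rotations so that the second differences visibly telescope. Once that normal form is in hand, the $A$-parabola, its axis, its focus $f_a$, and the confocality of all sibling rows follow from standard conic identities.
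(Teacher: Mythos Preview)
The paper does not actually prove this proposition: it is stated with attribution to A.~Akopyan (private communication), and the only supporting material is the explicit barycentric equation of the $A$-parabola and its relatives in \cref{app:eqns}. So there is no ``paper's own proof'' to compare against; at most the paper implicitly invites a symbolic verification using those closed forms.

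Your strategy is sound in outline: a bi-infinite sequence with constant nonzero second difference $W$ is exactly a quadratically parametrized parabola with axis direction $W$, and confocality for parabolas does reduce to ``same focus and same axis direction''. Where the proposal remains a plan rather than a proof is precisely at the step you yourself flag as the main obstacle. You assert that the oriented diameters $w_i=U_i-U_{i-1}$ form an arithmetic progression, but nothing in \cref{lem:refl-apices} or \cref{prop:snap} says this directly; those results control what happens around a \emph{single} central hexagon, and converting them into the along-a-row identity $w_{i+1}-w_i\equiv W$ (equivalently $w_{i+1}-2w_i+w_{i-1}=0$) is the entire computation, not a bookkeeping detail. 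The same applies to confocality: you only say you ``would establish'' that passing to a parallel row leaves the focus formula invariant, but that invariance is again the content of the claim. In short, you have correctly identified the normal form the problem should take, but the two load-bearing identities---constant second difference along a row, and focus unchanged across rows---are stated rather than proved. To close the gap you must actually carry out the complex-number computation you sketch (encode each flank by its complex edge vectors, impose the $2\pi/3$ turning at each shared vertex together with the reflection lemma, and solve), or else fall back on a direct symbolic verification against the explicit equations in \cref{app:eqns}, which is closer to what the paper itself offers.
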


Referring to \cref{fig:par-major},
a total of three groups of confocal parabolas can be constructed, along three ``grains'' in the grid. Let $a,b,c$ denote the sides of $\T=ABC$, and $S$ is Conway's notation for twice the area of $\T$. Referring to \cref{fig:par-major-detail}:

\begin{theorem}
The foci of the three groups of confocal parabolas are vertices of an equilateral triangle with centroid at the $X_{16}$ common to all flanks and $\T$. The barycentric coordinates of $f_a$ are given by:
\[ f_a=\begin{bmatrix}
           \sqrt{3} (7 a^2 b^2 + 7 a^2 c^2 + 2 b^2 c^2 -4 a^4 - b^4- c^4) - 2 S (8 a^2 + b^2 + c^2) \\
           \sqrt{3} (2 a^2 b^2 - a^2 c^2 + 3 b^2 c^2 - 4 b^4 + c^4) + 2 S (2 a^2 - 2 b^2 - c^2) \\
           -\sqrt{3} (a^2 b^2 - 2 a^2 c^2 - 3 b^2 c^2 - b^4 + 4 c^4) + 2 S (2 a^2 - b^2 - 2 c^2)
         \end{bmatrix} \]
 Furthermore, side $s$ of the focal equilateral is given by:
\[ s^2 = \frac{3}{32}(a^2+b^2+c^2- 2 S \sqrt{3}) =\frac{3}{16}(\cot\omega-\sqrt{3})S \]
where $\omega$ is the Brocard angle of a triangle, given by $\cot(\omega)=(a^2+b^2+c^2)/(2S)$.
\label{thm:focal}
\end{theorem}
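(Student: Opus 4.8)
The plan is to carry out a coordinate computation organized around the recursion that generates the grid. Fix Cartesian coordinates with $\T=ABC$, and note that every vertex of the grid is an explicit affine combination of $A,B,C$ with coefficients in $\mathbb{Q}[\sqrt{3}]$: each step that grows the grid --- erecting a regular hexagon on a segment, or forming the flank triangle between two hexagons --- is an affine map built from $\pi/3$ rotations, and \cref{lem:refl-apices,prop:snap} make the propagation locally consistent. Thus the antipodal sequence $\ldots,U_{i-1},A,U_{i+1},\ldots$ along the $A$-grain has closed-form coordinates, and by the preceding proposition lies on a parabola --- indeed, the iterated step-map along a grain has unipotent linear part, which is the structural reason. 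I would then recover the $A$-parabola explicitly as the conic through five consecutive such vertices, and confirm that the discriminant of its quadratic part vanishes.

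Next I would extract the focus $f_a$. Writing the $A$-parabola as $P(x,y)=0$ with quadratic part a perfect square, the axis direction is the repeated eigendirection of that form; rotating and translating the curve to the normal form $Y=\kappa X^2$ displays its vertex and focus, which one transports back to the original frame. Doing this symbolically produces $f_a$ as a rational function of $a,b,c$ over $\mathbb{Q}[\sqrt{3}]$, and I expect the main obstacle to be reducing it to the compact barycentric vector in the statement --- most safely by clearing denominators and comparing coordinatewise, using the computer-algebra assistance reflected in \cref{app:eqns}. The foci $f_b,f_c$ of the parabola families along the other two grains follow from the same computation after the orientation-preserving relabeling $ABC\mapsto BCA\mapsto CAB$ (which fixes the sign of $S$), so $f_b$ and $f_c$ are the cyclic images of $f_a$.

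The two invariant assertions then come out of $f_a,f_b,f_c$ by bookkeeping. For the equilateral claim I would return to Cartesian coordinates and verify that the $2\pi/3$ rotation about the point $X_{16}$ common to $\T$ and all flank triangles --- the map $z\mapsto X_{16}+e^{2\pi i/3}(z-X_{16})$ on $\Cp$ --- sends $f_a$ to $f_b$ and $f_b$ to $f_c$ for one of the two orientations; this simultaneously yields that $f_af_bf_c$ is equilateral and that its centroid is $X_{16}$, which I would cross-check by confirming that the centroid of $f_a,f_b,f_c$ has barycentrics proportional to those of $X_{16}$. For the side, apply the barycentric distance formula to $f_a$ and $f_b$ and simplify; the quantity $a^2+b^2+c^2-2\sqrt{3}\,S$ that appears is nonnegative by Weitzenb\"ock's inequality and vanishes exactly when $\T$ is equilateral (consistent with $s=0$ and the foci collapsing), and the second expression for $s^2$ is merely the substitution $\cot\omega=(a^2+b^2+c^2)/(2S)$. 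The symbolic determination of $f_a$ in the second step is the real work; everything after it is routine.
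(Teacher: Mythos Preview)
Your outline is sound and matches the paper's approach, which is purely computational: the paper states \cref{thm:focal} without a written proof, supplying instead the explicit barycentric equation of the $A$-parabola and related objects in \cref{app:eqns} as the computer-algebra artifact from which the focus, the equilaterality, and the sidelength can be read off. Your plan --- parametrize the antipodal sequence, fit the conic, extract the focus, then verify the $2\pi/3$-rotation about $X_{16}$ and the distance formula --- is precisely the computation that produces those appendix formulas, so there is nothing to contrast; if anything your write-up is more explicit about the mechanics (the unipotent step-map explaining why the conic is a parabola, the cyclic relabeling for $f_b,f_c$, the Weitzenb\"ock sanity check) than the paper itself.
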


Let $a_i,b_i,c_i$ (resp $S_i$) be the sidelengths (resp. twice the area) of a given triangle $\T_i$ in the grid. Since any $\T_i$ can be used to start the grid:

\begin{corollary}
The quantity $a_i^2+b_i^2+c_i^2- 2 S_i\sqrt{3}$ is invariant over all $\T_i$.
\end{corollary}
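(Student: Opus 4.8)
\emph{Proof proposal.} The plan is to read this off \cref{thm:focal} once we know that the infinite grid, together with its three families of confocal parabolas, is \emph{rigid}: any triangle occurring in a given grid $\mathcal{G}$ regenerates the same $\mathcal{G}$ when used as the starting triangle of the iterative construction. This rigidity is the substance of \cref{lem:refl-apices}, \cref{prop:snap}, and the contiguity corollaries of \cref{sec:satellite} --- the ``snapping'' of $\H_6$ shows the construction is locally, hence globally, forced, so the underlying configuration of hexagons does not remember which triangle was designated ``reference.'' First I would make this precise: fix a grid $\mathcal{G}$, pick any triangle $\T_i$ in it, and observe that the three ``grains'' of adjacent, antipodally-glued hexagons through $\T_i$ are intrinsic features of $\mathcal{G}$; hence the three groups of confocal parabolas of \cref{sec:parabolas}, and in particular the unordered triple of foci $\{f_a,f_b,f_c\}$, are determined by $\mathcal{G}$ alone.

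Granting this, I apply \cref{thm:focal} with $\T=\T_i$: the three foci form an equilateral triangle whose side $s$ satisfies
\[ s^2 = \frac{3}{32}\bigl(a_i^2+b_i^2+c_i^2-2S_i\sqrt{3}\bigr). \]
Since the focal equilateral is literally the same triangle in the plane for every $\T_i$ in $\mathcal{G}$, the left-hand side is one fixed number. Equating the right-hand sides for two indices and cancelling $3/32$ gives $a_i^2+b_i^2+c_i^2-2S_i\sqrt{3}=a_j^2+b_j^2+c_j^2-2S_j\sqrt{3}$ for all $i,j$, which is the claim. As a sanity check, Weitzenb\"ock's inequality forces this common value to be nonnegative, and it vanishes precisely when some --- equivalently every --- $\T_i$ is equilateral, in which case $\mathcal{G}$ is the regular tiling and the three foci collapse onto the common $X_{16}$; note also that $a_i^2+b_i^2+c_i^2-2S_i\sqrt{3}=2S_i(\cot\omega_i-\sqrt{3})$, so the corollary equivalently asserts invariance of the ``Brocard defect'' $S_i(\cot\omega_i-\sqrt{3})$ across the grid.

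The only real content beyond \cref{thm:focal} is the rigidity step, and that is where I would be careful. One must check that the three diagonal directions singled out in \cref{sec:parabolas} attach to $\mathcal{G}$ rather than to the labelling $A,B,C$ of the chosen triangle --- equivalently, that relabelling $\T_i$ or passing to an adjacent flank only permutes $f_a,f_b,f_c$ among themselves, leaving the set $\{f_a,f_b,f_c\}$ fixed. This is visible in \cref{fig:parA} and consistent with the propagation of $X_{16}$, but a clean write-up should spell out why the ``grain'' structure is globally well defined. As a fallback avoiding the parabolas altogether, I would verify by a short barycentric computation that replacing $\T=ABC$ by any one of its hexagonal flanks preserves $a^2+b^2+c^2-2S\sqrt{3}$, and then use that any two triangles of $\mathcal{G}$ are joined by a finite chain of such flank moves; I expect that computation to be routine but tedious, hence a backup rather than the main line.
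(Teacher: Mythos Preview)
Your argument is correct and follows exactly the paper's own route: the corollary is deduced immediately from \cref{thm:focal} by noting that any triangle $\T_i$ in the grid can serve as the reference triangle, so the focal equilateral---and hence $s^2$---is the same regardless of which $\T_i$ one starts from. The paper states this in a single clause (``Since any $\T_i$ can be used to start the grid''), whereas you spell out the rigidity step and add the Weitzenb\"ock and fallback remarks, but the substance is identical.
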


As shown in \cref{fig:par-major-detail}:

\begin{proposition}
The axes of the three confocal groups concur at the common $X_{16}$ at $120^o$ angles.
\end{proposition}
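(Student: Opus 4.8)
The plan is to reduce the statement to a single collinearity. First, recall that for the $A$-group the parabolas share the focus $f_a$ (by the preceding proposition), and the axis $\ell_a$ of the group is the axis of the $A$-parabola, hence a line through $f_a$; its direction is the unique infinite point of that parabola. So I would write the barycentric equation of the $A$-parabola — obtained by interpolating a conic through five of the vertices $\dots,U_{i-1},A,U_{i+1},\dots$ whose coordinates come from the grid construction (cf. \cref{app:eqns}), the parabola property being already established — and intersect it with the line at infinity $x+y+z=0$. Because the conic is a parabola this intersection is a single double point $P_a=(p_1:p_2:p_3)$, the direction of $\ell_a$; thus $\ell_a$ is the line through $f_a$ with infinite point $P_a$.

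Second, I would show $X_{16}\in\ell_a$, i.e. that the $3\times 3$ determinant with rows $X_{16}$, $f_a$, $P_a$ vanishes. Using the standard barycentrics of the second isodynamic point $X_{16}$ and the explicit formula for $f_a$ from \cref{thm:focal}, this becomes an identity in $a,b,c$ (with $S$ satisfying $16S^2=2a^2b^2+2b^2c^2+2c^2a^2-a^4-b^4-c^4$) which I expect to collapse, after clearing the $\sqrt3$ terms, to a polynomial identity a CAS confirms. By the cyclic symmetry of the whole construction the same computation gives $X_{16}\in\ell_b$ and $X_{16}\in\ell_c$, so the three axes concur at $X_{16}$, and in fact $\ell_a,\ell_b,\ell_c$ are exactly the lines $X_{16}f_a$, $X_{16}f_b$, $X_{16}f_c$.

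The $120^\circ$ spacing is then immediate from \cref{thm:focal}: since $f_a,f_b,f_c$ are the vertices of an equilateral triangle with centroid $X_{16}$, the three segments from that centroid to the vertices are pairwise at $120^\circ$, hence so are $\ell_a$, $\ell_b$, $\ell_c$.

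The main obstacle is the axis-direction computation in the first step together with the determinant identity in the second: one must carry coordinates for the grid vertices $U_j$ (or, alternatively, reconstruct the $A$-parabola from its focus $f_a$ and the two known points $A=(1:0:0)$ and $U_{i+1}$ via the focus–directrix definition, the directrix being the appropriate common tangent to the circles centered at those points with radii their distances to $f_a$), and the symbolic expression for $P_a$ will be sizeable; one should also verify that the $A$-group is genuinely coaxial and not merely co-focal, e.g. by checking that a second row of the grid yields a parabola with the same infinite point $P_a$. A possible shortcut that avoids much of this: observe that the three grain directions of a hexagonal grid are mutually at $120^\circ$ and argue that rotation by $120^\circ$ about $X_{16}$ permutes the three parabola groups (it already permutes $f_a,f_b,f_c$ by \cref{thm:focal}); then one needs only $X_{16}\in\ell_a$, and both the concurrency and the $120^\circ$ spacing follow from this symmetry alone.
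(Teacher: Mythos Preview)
The paper does not actually supply a proof of this proposition: it is stated as a fact (``As shown in \cref{fig:par-major-detail}'') and supported only by the explicit barycentric data in \cref{app:eqns}, in particular the center-at-infinity of the $A$-parabola group. So your proposal is already more than the paper offers, and your main line of argument --- compute the infinite point $P_a$ of the $A$-parabola, check the collinearity of $X_{16}$, $f_a$, $P_a$ via a determinant identity in $a,b,c,S$, then cycle --- is exactly the kind of CAS verification the appendix invites, and it is sound. Your final step, deducing the $120^\circ$ spacing from \cref{thm:focal} once $\ell_a=X_{16}f_a$ etc., is clean and correct.

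One caution about your proposed shortcut. You suggest arguing that a $120^\circ$ rotation about $X_{16}$ permutes the three parabola groups, citing that ``the three grain directions of a hexagonal grid are mutually at $120^\circ$''. That is true for a \emph{uniform} regular hexagonal tiling, but the grid here is built from a generic triangle $\T$ and the hexagons have varying sizes; there is no a~priori global $120^\circ$ rotational symmetry of the grid, and the paper nowhere establishes one. The only $120^\circ$ symmetry available is that of the focal equilateral from \cref{thm:focal}, which is what you already exploit in your main argument --- so the shortcut does not actually bypass the collinearity check. Stick with your determinant computation; the appendix even hands you $P_a$ explicitly, so the identity to verify is concrete. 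Your side remark that one should confirm the $A$-group is coaxial (same $P_a$ for a second row) is well taken and is again a straightforward CAS check from the appendix formulas.
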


\begin{figure}
    \centering
    \includegraphics[trim=100 200 1000 200,clip,width=\textwidth,frame]{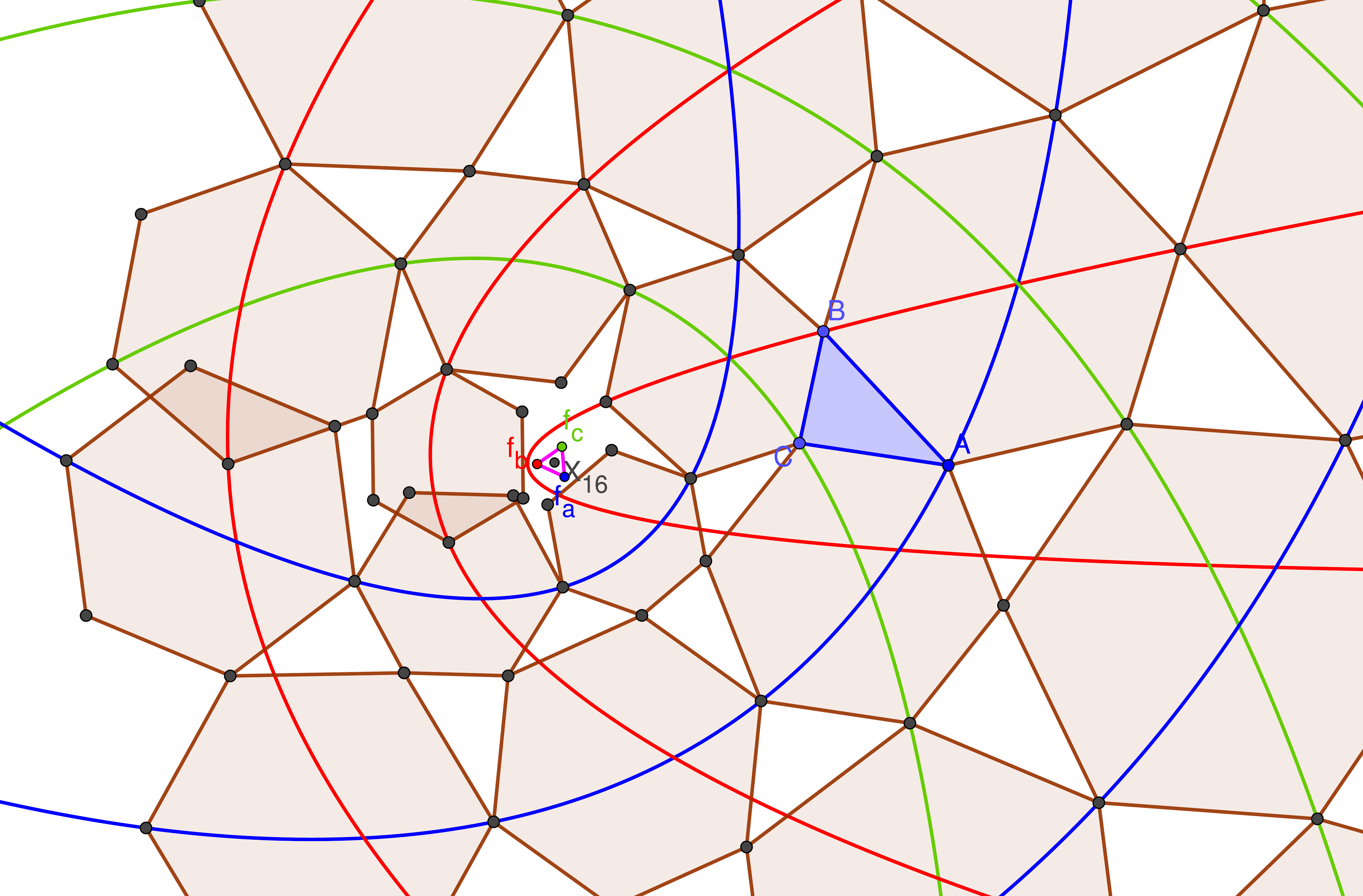}
    \caption{Three groups of confocal parabolas (red, green, blue) run along antipodal vertices of hexagons along three 3 major directions. The foci $f_a,f_b,f_c$ of each confocal group are the vertices of an equilateral triangle whose centroid is the $X_{16}$ common to all flank triangles in the grid.}
    \label{fig:par-major}
\end{figure}

\begin{figure}
    \centering
    \includegraphics[trim=400 0 300 0,clip,width=\textwidth,frame]{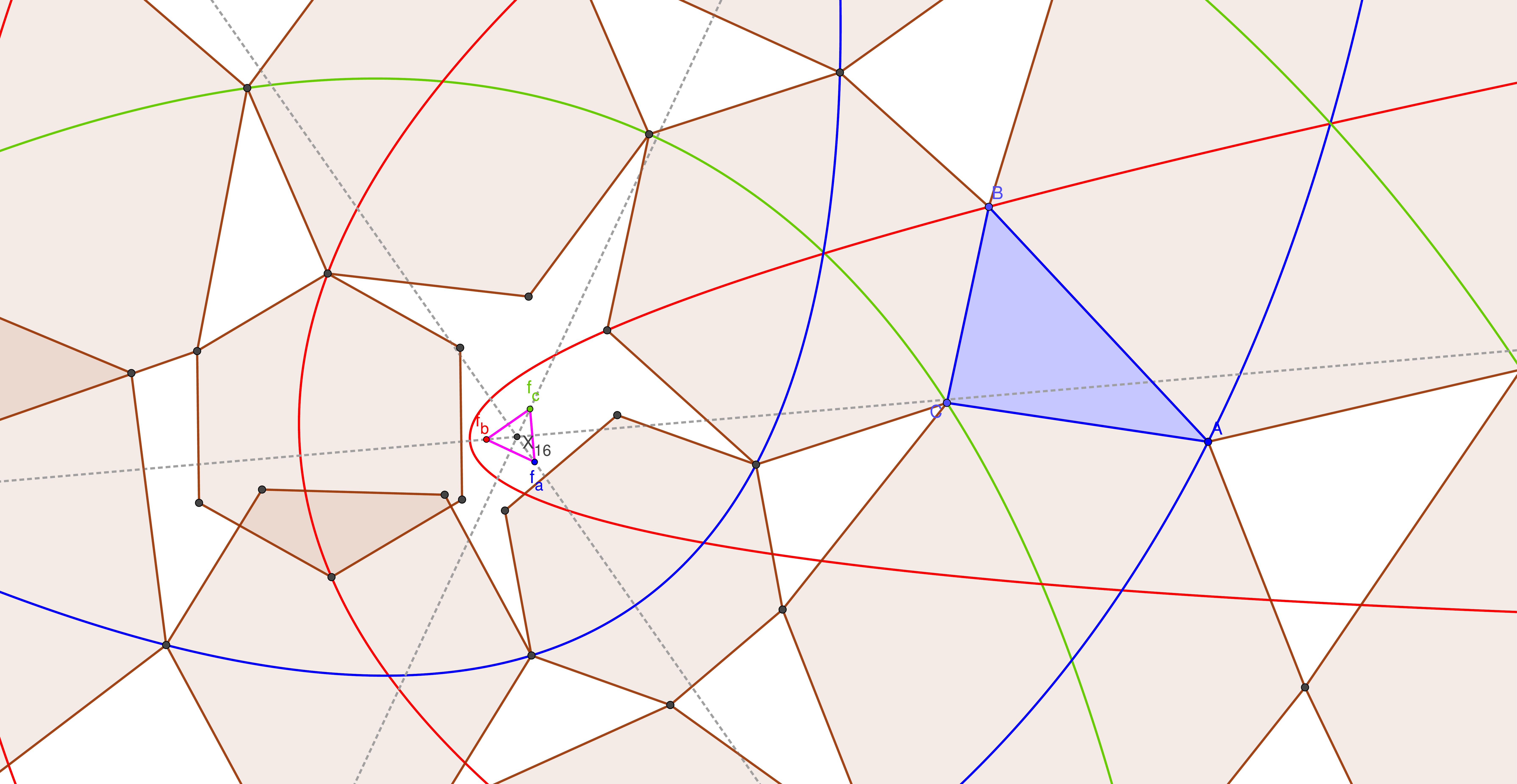}
    \caption{Zooming into the focal equilateral $f_a,f_b,f_c$. Also shown (dashed gray) are the common axes of the 3 confocal parabola groups, all of which pass throught the common $X_{16}$ at $120^o$ angles.}
    \label{fig:par-major-detail}
\end{figure}

\subsection*{A directrix equilateral}

The anticomplement\footnote{This is the double-length reflection about the barycenter $X_2$.} of the second Fermat point $X_{14}$ is labeled $X_{617}$ on \cite{etc}.

Referring to \cref{fig:dir-equi}:

\begin{proposition}
The triangle bounded by the directrices of the $A$-, $B$-, and $C$-parabolas is an equilateral whose centroid is $X_{617}$, and whose sidelengths $s'$ is given by:
\begin{align*}
    (s')^2 =& \left(\frac{S}{4}\right)\left(\frac{5\sqrt{3} + 11\cot{\omega} + 16(\sqrt{3} + \cot{\omega})}{2\cos{(2\omega)}-1}\right)
\end{align*}
\label{prop:dir-equi}
\end{proposition}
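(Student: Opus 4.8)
\textbf{Proof plan for \cref{prop:dir-equi}.}

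The plan is to leverage the structure already established in \cref{thm:focal}: we know the three foci $f_a,f_b,f_c$, we know their common axes pass through $X_{16}$ at $120^\circ$ angles, and we know the axis directions. A parabola is determined by its focus together with its directrix, and the directrix is perpendicular to the axis; so once I fix the axis direction of the $A$-parabola (say the unit vector $u_a$ along the line $f_a X_{16}$, whose barycentric/Cartesian direction is extractable from the data in \cref{thm:focal}), the $A$-directrix is the line $\{x : \langle x - f_a, u_a\rangle = d_a\}$ for a scalar $d_a$ equal to (twice the) focal distance from $f_a$ to the vertex of the parabola. The cleanest way to pin $d_a$ down is to use one known point on the $A$-parabola — e.g. the vertex $A$ itself, which \cref{thm:focal}'s proposition guarantees lies on it — via the focus–directrix property $|XA\,\text{-}\,\text{to}\,f_a| = \operatorname{dist}(A,\ell_a)$. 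That gives the $A$-directrix line $\ell_a$ explicitly in barycentrics, and $\ell_b,\ell_c$ follow cyclically.

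First I would set up Cartesian coordinates (or work in the exact barycentric line coordinates $[\,l:m:n\,]$), compute the axis direction $u_a$ from $f_a - X_{16}$ using the fact from \cref{thm:focal} that the three axes are at $120^\circ$, and then write each directrix $\ell_a,\ell_b,\ell_c$ as a linear form. Next, I would intersect $\ell_b \cap \ell_c$ to get one vertex of the bounding triangle, and cyclically the other two; because the three axis directions differ by $120^\circ$, the three directrices are mutually at $60^\circ$, which already forces the bounding triangle to be \emph{equilateral} — that part is essentially automatic and needs no computation beyond noting the angles. The remaining content is metric: (a) identify the centroid of this directrix triangle, which I claim is $X_{617}$ = anticomplement of $X_{14}$, and (b) compute its side $s'$.

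For (a), the natural route is: the centroid of the directrix triangle equals the average of the three pairwise intersections, and by the $120^\circ$-symmetry it equals the common "center" of the three parallel-offset lines, i.e. the point equidistant (with sign) from all three directrices along the three axis directions. Since the three parabolas are, respectively, the $A$-, $B$-, $C$-parabolas and the whole configuration is governed by the isodynamic/Fermat structure (the foci-equilateral is centered at $X_{16}$, whose isogonal conjugate is $X_{14}$), it is very plausible that the directrix triangle's center is a specific combination that Kimberling's database identifies as $X_{617}$; I would verify this by computing the centroid's barycentrics symbolically and matching against the known formula for $X_{617} = 2X_2 - X_{14}$ (the anticomplement of $X_{14}$). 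For (b), once $\ell_a$ is known, $s'$ is just the distance between two of the intersection points, or equivalently the common distance between consecutive parallel directrix-offsets divided by $\sin 60^\circ$; substituting $a^2+b^2+c^2 = 2S\cot\omega$ and $2\cos(2\omega)-1 = \ldots$ (the standard Brocard-angle identities) should collapse the expression into the stated closed form.

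The main obstacle I anticipate is not the equilateral claim (forced by angles) nor even the centroid identification (a finite symbolic check against \cite{etc}), but rather bookkeeping the \emph{signed} focal distance $d_a$ — i.e. getting the directrix on the correct side of the focus and with the correct magnitude. The vertex $A$ lies on the $A$-parabola, but so do the antipodal hexagon vertices $U_{i\pm1}$, and using the wrong one, or mishandling the orientation of $u_a$, would flip or rescale $\ell_a$ and corrupt both $X_{617}$ and $s'$. I would therefore cross-check $d_a$ using a second known point on the $A$-parabola (one of the $U_i$) before trusting the final algebra, and only then push the Brocard-angle simplification through to land on $(s')^2 = (S/4)\bigl((5\sqrt{3} + 11\cot\omega + 16(\sqrt{3}+\cot\omega))/(2\cos 2\omega - 1)\bigr)$.
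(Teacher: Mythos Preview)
The paper does not actually supply a proof of \cref{prop:dir-equi}: the proposition is stated and followed only by a note pointing to the explicit barycentric formulas for the $A$-directrix and for the $A$-vertex of the directrix triangle collected in \cref{app:eqns}. The implicit justification is therefore brute-force symbolic verification by CAS.

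Your plan is sound and, for the qualitative part, more conceptual than anything the paper offers. The observation that the three directrices, being perpendicular to axes which (by the proposition following \cref{thm:focal}) concur at $X_{16}$ at mutual $120^\circ$ angles, must themselves meet pairwise at $60^\circ$ and hence bound an equilateral, disposes of the ``equilateral'' claim with no computation at all; the paper does not isolate this step. For the centroid identification with $X_{617}$ and for the closed form of $(s')^2$, your plan reduces to the same kind of symbolic computation that the appendix records --- you would in effect be rederiving the $A$-directrix line listed in \cref{app:eqns} and then intersecting cyclically --- so on that part there is no real methodological difference, only a difference in presentation.

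Two minor cautions on execution. First, you take the $A$-axis direction as $f_a X_{16}$; this is correct but worth one line of justification (the axis of a parabola passes through its focus, and the paper asserts the three axes concur at $X_{16}$, so the $A$-axis is the line $f_a X_{16}$). Second, your sign worry about $d_a$ is real: the focus--directrix equation $|A-f_a|=|\langle A-f_a,u_a\rangle-d_a|$ has two solutions for $d_a$, and only one gives the parabola that actually threads the hexagon vertices. Cross-checking with a second known point, as you propose, or directly against the $A$-directrix equation in \cref{app:eqns}, resolves this.
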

\noindent Note: an expression of the $A$-vertex of the above appears in \cref{app:eqns}. Note also that the sidelength is of the directrix equilateral is not conserved across all flank triangles in the grid since each of these will be associated with a different directrix equilateral.

Interestingly, the triangle formed by the vertices of said parabolas is not an equilateral.

\begin{figure}
    \centering
    \includegraphics[width=\textwidth]{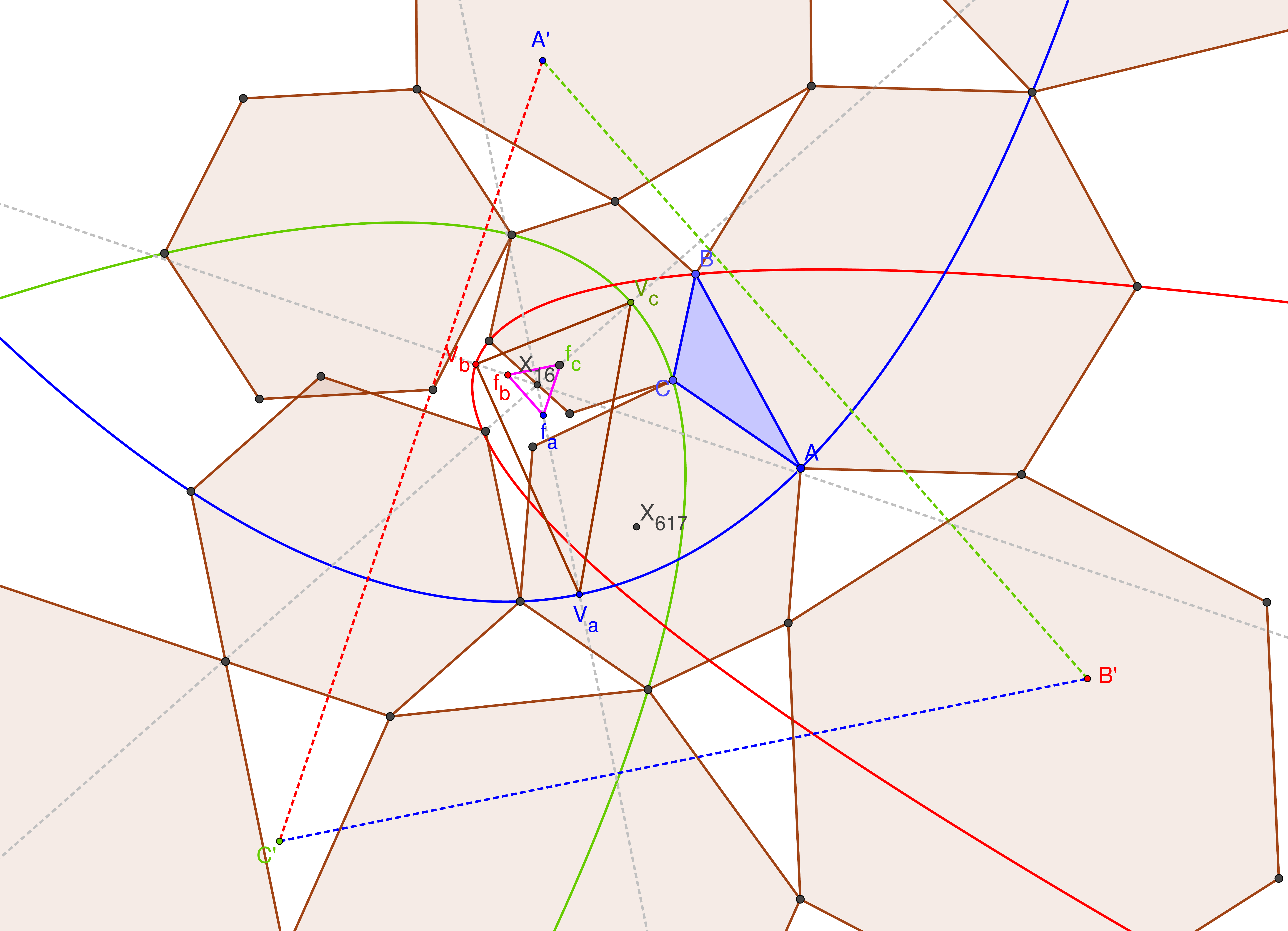}
    \caption{The triangle $A'B'C'$ bounded by the directrices (dashed blue, red, and green) of the $A$-, $B$-, and $C$-parabolas (blue, red, red, and green) is also an equilateral, whose centroid is $X_{617}$. Interestingly, the triangle (brown) connecting the vertices $V_a,V_b,V_c$ of said parabolas is in general a scalene.}
\label{fig:dir-equi}
\end{figure}

\subsection*{Skip-1 confocal parabolas}

Let $Q_1,A,Q_3,Q_5,\ldots$ (resp. $Q_2,B,Q_4,Q_6,\ldots$) be a sequence of odd (resp. even) side vertices of adjacent hexagons, as shown in  \cref{fig:par-alternated}. 

\begin{proposition}
The sequence of odd (resp. even) vertices lies on a parabola. The former (resp. latter) is confocal with the $A$-parabola (resp $B$-parabola). Furthermore, their axes are parallel to the axis of the $C$-parabola, and pass through $f_b$ and $f_c$, respectively.
\end{proposition}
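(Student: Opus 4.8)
\medskip
\noindent\textbf{Proof strategy (proposal).} The plan is to reduce the skip-1 statement to the already-established ``skip-0'' fact (\cref{fig:parA}) together with its quantitative refinement \cref{thm:focal}, rather than to attack the new parabolas from scratch. First I would set up a Cartesian frame aligned with the chosen grain and write the vertices of the strip of hexagons $\H_1,\H_2,\dots$ running along it as explicit functions of the step index. The recursion is completely pinned down by results already in hand: \cref{lem:refl-apices} says the apex of every flank reflects across its base to the single point $A'$, \cref{prop:snap} says the next hexagon snaps onto the free side of the new flank after a turn of $2\pi/3$, and erecting a regular hexagon on an oriented segment is a fixed operation (each erected vertex is a fixed complex-affine function of the two endpoints of its base). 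Composing these gives closed forms for the odd side vertices $Q_1,A,Q_3,Q_5,\dots$ and the even ones $Q_2,B,Q_4,Q_6,\dots$ in terms of $a,b,c$ (equivalently, of the vertices of $\T$).

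Second, I would take five consecutive odd vertices and form the unique conic through them; equivalently, since the axis direction is conjectured, fit the parabola through four of them with prescribed direction at infinity and then verify the remaining points. To certify it is a parabola I would check that its degree-two part is a perfect square, i.e.\ that the conic is tangent to the line at infinity, and read off the tangency direction; this should match the direction of the axis of the $C$-parabola (the median of the focal equilateral issuing from $f_c$, by \cref{thm:focal} and the proposition on concurrent axes). I would then extract the focus and axis of the fitted parabola in closed form and compare against the data for $f_a$ (and, for the even sequence, $f_b$) supplied by \cref{thm:focal}, thereby verifying both the ``confocal'' assertion and the statement locating the axis through $f_b$ (resp.\ $f_c$). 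Once the parabola is determined by these five points, the rest of the sequence is handled by recursion: because the advance-one-hexagon map along a fixed grain obeys the same rule at every step, it suffices to recheck one further period and then run an induction on the focus--directrix identity $\mathrm{dist}(Q_{2k-1},f_a)=\mathrm{dist}(Q_{2k-1},d)$, whose inductive step parallels the one used for the skip-0 $A$-parabola since the odd zigzag and the antipodal sequence inhabit the same strip. The even sequence then follows identically, or from the reflection symmetry swapping the two sides of the grain.

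The main obstacle is the bookkeeping in the first step. Consecutive hexagons in the grid have different sizes, so the advance-one-hexagon map is not a fixed isometry, nor even a fixed similarity --- indeed a nontrivial similarity cannot preserve a parabola, so it must genuinely vary from step to step --- and one must carry the changing sidelengths through the recursion and then simplify the unwieldy expressions that result; this is where a computer-algebra system is essentially unavoidable, and where the explicit formulas collected in \cref{app:eqns} presumably feed in. A secondary, purely expository difficulty is the delicate interplay of the three clauses ``confocal with the $A$-parabola'', ``axis parallel to the $C$-axis'', and ``axis through $f_b$'': rather than try to see these synthetically, I would let them drop out of the explicit focus-and-axis computation above, which is unambiguous once the coordinates are in place.
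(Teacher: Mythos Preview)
The paper gives no proof of this proposition at all: it is stated bare, followed only by the remark that ``computer-usable explicit equations for some of the objects in this section appear in \cref{app:eqns}'', where the barycentric equation of the skip-1 parabola through $B$ and the center-at-infinity of the $A$-group are recorded verbatim. In other words, the paper's ``proof'' is an implicit invitation to verify the claims symbolically from those formulas.

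Your proposal is therefore not merely in the same spirit as the paper --- it is strictly more than the paper provides. The overall plan (parametrize the strip via the reflection/snap recursion of \cref{lem:refl-apices} and \cref{prop:snap}, fit a conic through five consecutive odd vertices, check tangency to the line at infinity, read off focus and axis direction, compare with $f_a$ and the $C$-axis from \cref{thm:focal}, then induct) is sound and would succeed with CAS assistance; the appendix formulas are exactly the endpoints of such a computation. One small caution: your inductive step appeals to ``the same rule at every step'' for the advance-one-hexagon map, but as you yourself note this map is not a fixed similarity, so the induction cannot be a one-line symmetry argument --- it has to be the algebraic identity that the focus--directrix relation propagates, which in practice means verifying that the explicit parabola equation annihilates the general $Q_{2k+1}$ produced by the recursion. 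That is a genuine (if mechanical) computation, not a free consequence of the five-point fit.
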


\begin{figure}
    \centering
    \includegraphics[width=\textwidth,frame]{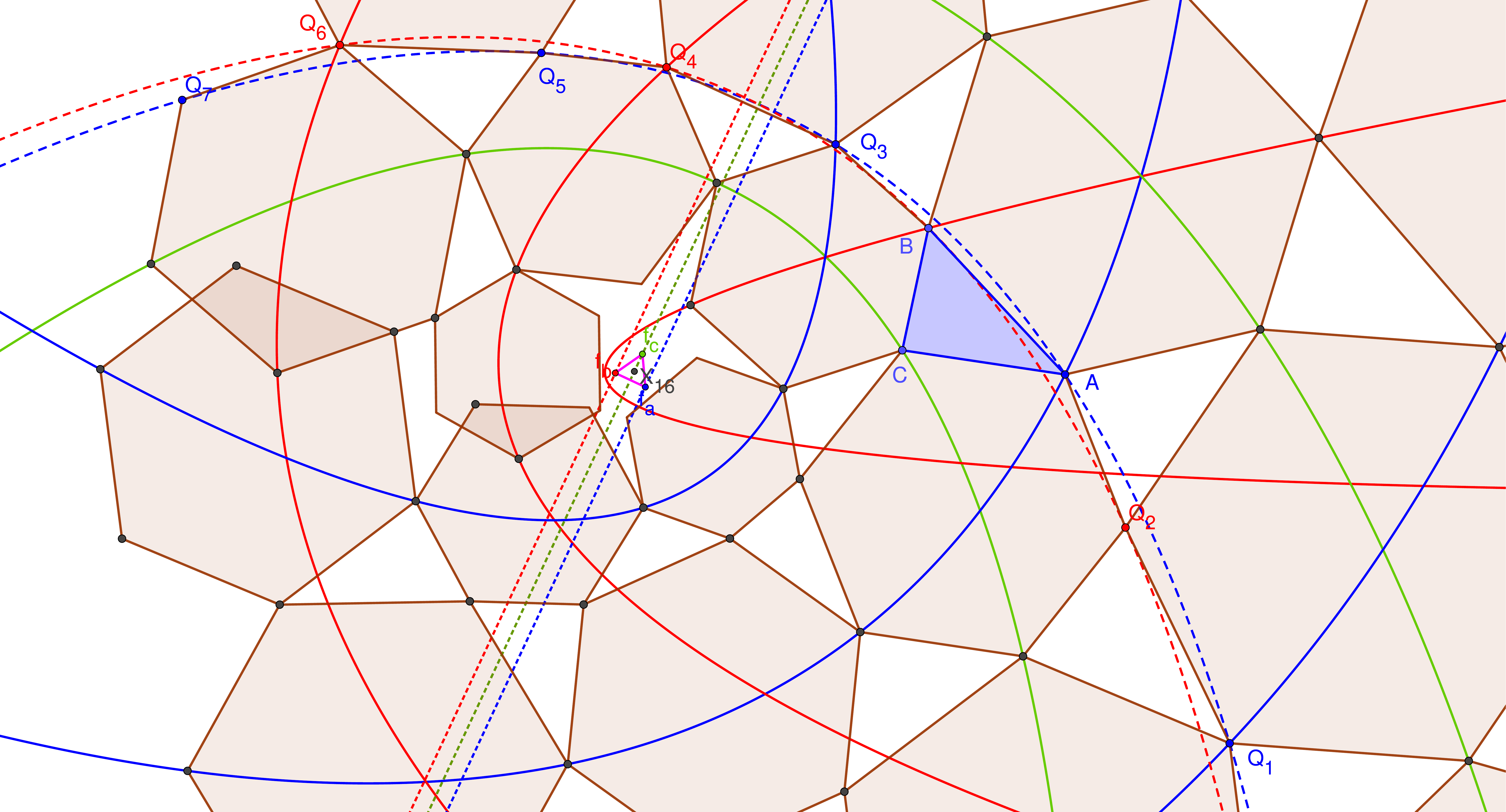}
    \caption{Two additional groups of confocal parabolas exist which pass through the odd (resp. even) side vertices along a given grain of the grid. A member of the first (resp. second) confocal group is shown in dashed blue (resp. red), passing through odd vertices $[\ldots,Q_1,A,Q_3,Q_5,Q_7,\ldots]$ (resp. even vertices) $[\ldots,Q_2,B,Q_4,Q_6,\ldots]$. The focus of the odd (resp. even) group is $f_a$ (resp. $f_b$). The major axes of the odd, even, and original $C$-parabola group are parallel (dashed red, blue, green).}
    \label{fig:par-alternated}
\end{figure}

The above statements are valid cyclically, i.e., there are families of confocal odd and even parabolas along each of the 3 major directions in the grid, e.g., corresponding to the diagonals of a hexagon erected upon a side of $\T$.

Computer-usable explicit equations for some of the objects in this section appear in \cref{app:eqns}.

\section{Controlled by Poncelet}
\label{sec:poncelet}
Recall Poncelet's theorem: if a polygon inscribed in one conic simultaneously circumscribes a second one, a 1d family of such polygons exists inscribed/circumscribed about the same conics \cite{dragovic2014}. In \cite{reznik2020-similarityII} two related families of Poncelet triangles are studied: (i) the homothetic family, and (ii) the Brocard porism, see \cref{fig:homot-broc}. Geometric details about this family are provided in \cref{tab:homot-broc}.

\begin{figure}
    \centering
    \includegraphics[width=\textwidth]{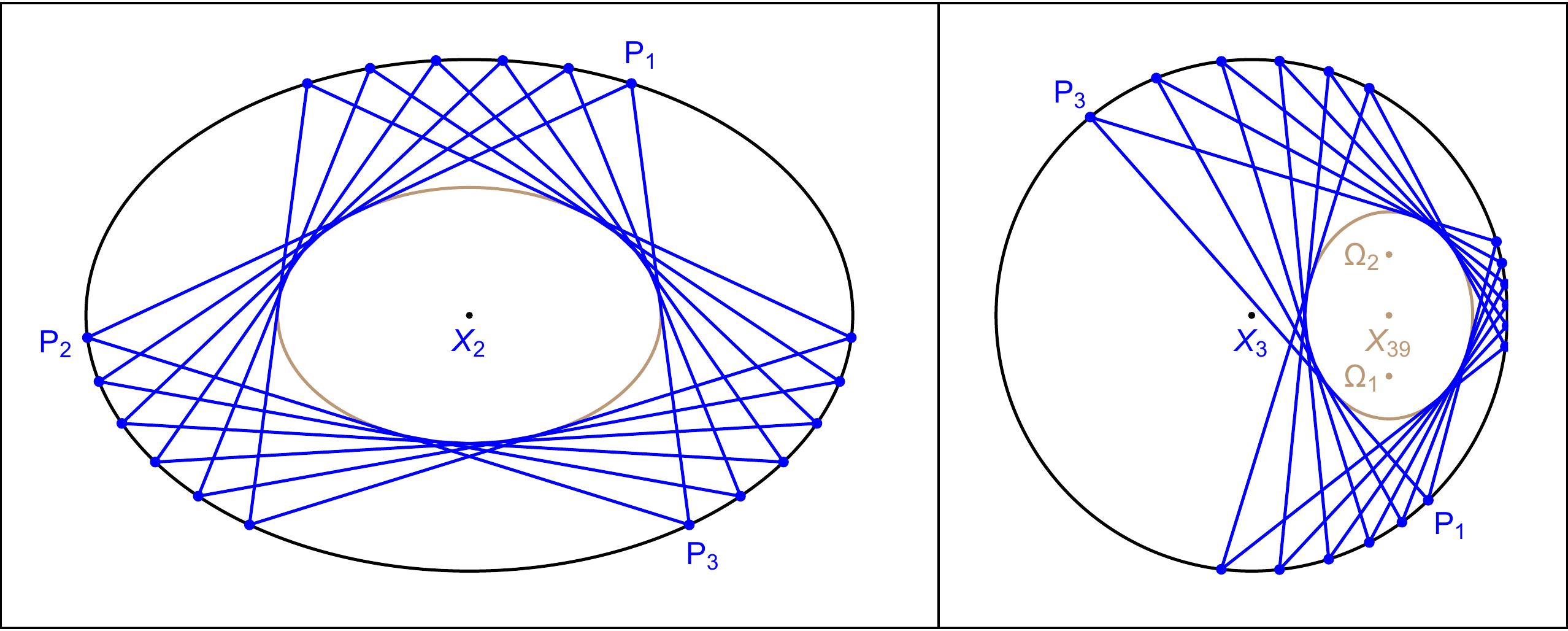}
    \caption{\textbf{Left:} The ``homothetic'' family is interscribed between two concentric, homothetic families centered on the fixed barycenter $X_2$. \textbf{Right:} The Brocard porism are circle-inscribed triangles circumscribing a fixed conic known as the Brocard inellipse, whose foci are the stationary Brocard points $\Omega_1,\Omega_2$ of the family.}
    \label{fig:homot-broc}
\end{figure}

\begin{table}[H]
\begin{tabular}{|r||l|l|l|l|}
\hline
family & Outer Conic & Inner Conic & Stationary & Conserves \\
\hline
Homothetic & Steiner Ellipse & Steiner Inellipse & $X_2$ & $\sum{s_i^2}$, $A$, $\omega$ \\
\hline
\makecell[lc]{Brocard\\Porism} & Circumcircle & Brocard Inellipse & \makecell[lc]{$X_3$, $X_6$, $X_{39}$,\\ $\Omega_1$, $\Omega_2$, $X_{15}$, $X_{16}$, $\ldots$} & $\sum{s_i^2}/A$, $\omega$\\
\hline
\end{tabular}
\caption{Geometric details about the homothetic and Brocard porism triangle families.}
\label{tab:homot-broc}
\end{table}

\subsection*{Homothetic phenomena}

Referring to \cref{fig:ponc-homot}, consider our basic construction such that the reference triangle $\T$ is one in the homothetic family.

Referring to the last column of \cref{tab:homot-broc}, notice that the homothetic family conserves both the sum of squared sidelengths and area. Another fact proved in \cite{reznik2020-similarityII} is that the locus of $X_k$, $k=13,14,15,16$ over this family are 4 distinct circles. 

Since the the side of said equilateral only depends on the sum of squared sidelengths and area \cref{thm:focal}:

\begin{corollary}
Over homothetic triangle family controlling our grid, the focal equilateral has invariant sidelength and circumradius. Furthermore, the locus of its centroid is a circle concentric with the homothetic pair of ellipses.
\end{corollary}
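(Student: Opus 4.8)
The plan is to derive both assertions from \cref{thm:focal} together with the conservation laws of the homothetic family recorded in \cref{tab:homot-broc}. For the first assertion I would proceed as follows. \cref{thm:focal} expresses the side of the focal equilateral as $s^2 = \tfrac{3}{32}(a^2+b^2+c^2-2S\sqrt{3})$, or equivalently as $s^2 = \tfrac{3}{16}(\cot\omega-\sqrt{3})\,S$, a function of the pair $(a^2+b^2+c^2,\ \text{area})$ alone (recall $S$ is twice the area). The homothetic family conserves $\sum s_i^2 = a^2+b^2+c^2$, the area $A$ (hence $S$), and the Brocard angle $\omega$, so $s$ is constant along the family; since the focal triangle is equilateral its circumradius equals $s/\sqrt{3}$, which is then also constant.

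For the second assertion I would use that, again by \cref{thm:focal}, the centroid of the focal equilateral is the point $X_{16}$ common to $\T$ and all flanks, so the locus in question is precisely the locus of $X_{16}$ as $\T$ runs over the homothetic family. That this locus is a circle is one of the facts about $X_{13},\dots,X_{16}$ quoted from \cite{reznik2020-similarityII}. To finish I would pin down its center as $X_2$ by a symmetry argument: the half-turn $\rho$ about $X_2$ fixes both the Steiner ellipse and the Steiner inellipse (both centered at $X_2$) and preserves orientation, so it carries the homothetic family to itself; since $X_{16}$ is equivariant under isometries, $X_{16}(\rho(\T)) = \rho\big(X_{16}(\T)\big)$, so the circle traced by $X_{16}$ is $\rho$-invariant and hence centered at $X_2$, i.e.\ concentric with the homothetic pair of ellipses.

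I expect the main obstacle to be exactly this last step — identifying the center of the $X_{16}$-locus — and the half-turn argument above is the clean way around it; a fallback is to substitute a barycentric parametrization of the homothetic family into the coordinates of $f_a$ (or of $X_{16}$) from \cref{thm:focal} and verify directly that the image is a circle about $X_2$, but that route is computational and less transparent. I would also flag, to avoid over-claiming, that only the focal equilateral's own size is invariant here; the radius of the centroid's locus circle depends on which homothetic ellipse pair is chosen.
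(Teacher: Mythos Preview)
Your argument is correct and matches the paper's: invariance of $s$ from \cref{thm:focal} plus the conserved quantities in \cref{tab:homot-broc}, and identification of the centroid locus with the $X_{16}$-locus, quoted as a circle from \cite{reznik2020-similarityII}. The paper simply asserts concentricity (presumably deferring to that reference), whereas you supply the half-turn-about-$X_2$ symmetry argument explicitly; that extra step is sound and a genuine improvement in self-containment.
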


\begin{figure}
    \centering
    \includegraphics[width=.7\textwidth]{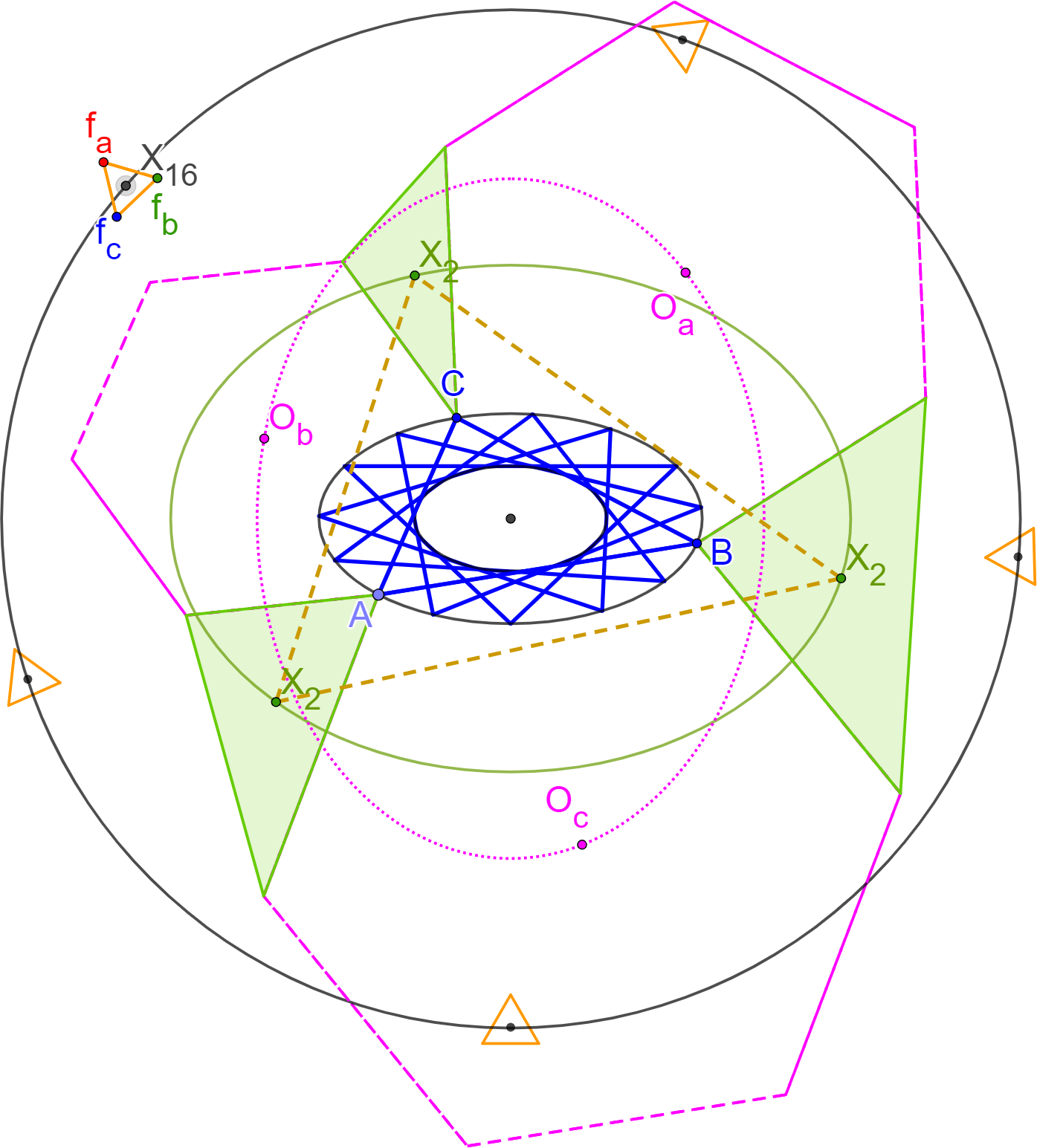}
    \caption{Phenomena manifested by objects in our basic construction over homothetic triangles (blue): (i) the focal equilateral (orange) has fixed sidelength and (ii) its centroid moves along a circle (black); (iii) the centroids $X_2$ of the 3 flank triangles move along a first ellipse (green) concentric with the homothetic pair; (iv) the centroids $O_a,O_b,O_c$ of the three regular hexagons (purple) move along a second ellipse (dotted purple) which is a $90^o$-rotated copy of (iii).}
    \label{fig:ponc-homot}
\end{figure}

Still referring to \cref{fig:ponc-homot}, experimentally, we observe:

\begin{observation}
Over the homothetic family, (i) the locus of the barycenters of the three flanks is an ellipse concentric and axis-alinged with the homothetic pair, though of distinct aspect ratio, and (ii) the locus of the centroids of the three regular hexagons erected on $\T$ is an ellipse which is a $90^o$-rotated copy of (i).
\end{observation}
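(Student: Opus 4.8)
The plan is to do everything in complex affine coordinates. Recall from \cite{reznik2020-similarityII} that the homothetic Poncelet family is the image, under one fixed real-affine map of the plane, of the family of equilateral triangles inscribed in a circle (and circumscribing its concentric, half-radius incircle). Identifying the plane with $\mathbb{C}$ and writing $\zeta=e^{2\pi i/3}$, this means the vertices of the $t$-th triangle of the family may be taken as
\[
 A(t)=z_0+p\,e^{it}+q\,e^{-it},\qquad B(t)=z_0+p\zeta\,e^{it}+q\bar\zeta\,e^{-it},\qquad C(t)=z_0+p\bar\zeta\,e^{it}+q\zeta\,e^{-it},
\]
where $z_0$ is the fixed barycenter $X_2$ and $p,q\in\mathbb{C}$ are fixed. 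The Steiner circumellipse is then $\{z_0+p\,e^{it}+q\,e^{-it}\}$ and the Steiner inellipse is its image under $z\mapsto z_0-\tfrac12(z-z_0)$, so the two are concentric at $z_0$ with common axis directions $e^{i(\arg p+\arg q)/2}$ and its perpendicular. We may assume the non-circular case $p,q\neq 0$; the circular member is trivial.

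Next I would observe that each of the three flank barycenters $G_a,G_b,G_c$ and the three hexagon centroids $O_a,O_b,O_c$ is a \emph{complex-affine} function of $(A,B,C)$ — a combination $\alpha A+\beta B+\gamma C$ with complex coefficients summing to $1$ — whose coefficients are \emph{constants} depending only on the (fixed) orientation of the family, since the constructions use only midpoints and $\pm 60^\circ$ rotations, i.e. maps of the form $z\mapsto \alpha z+\beta w$. An elementary computation for the orientation of \cref{fig:basic} gives the proximal hexagon vertices $A+\bar\zeta(B-A)$ and $A+\zeta(C-A)$ and the centre $B\,e^{i\pi/3}+C\,e^{-i\pi/3}$ of the hexagon on $BC$, hence
\[
 G_a=\tfrac43\,A+\tfrac{\bar\zeta}{3}\,B+\tfrac{\zeta}{3}\,C,\qquad\qquad O_a=e^{i\pi/3}\,B+e^{-i\pi/3}\,C,
\]
with $G_b,G_c,O_b,O_c$ produced by the cyclic substitution $A\mapsto B\mapsto C\mapsto A$.

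Then I would substitute the parametrization into these and collapse the coefficients of $e^{it}$ and $e^{-it}$ using only $\zeta^3=1$, $\zeta+\bar\zeta=-1$, $e^{i\pi/3}=-\bar\zeta$ and $e^{-i\pi/3}=-\zeta$; each becomes a rational number, giving
\[
 G_a(t)=z_0+2p\,e^{it}+q\,e^{-it},\qquad\qquad O_a(t)=z_0-2p\,e^{it}+q\,e^{-it}.
\]
From here the statement reads off. Any curve $\{z_0+P\,e^{it}+Q\,e^{-it}:t\}$ is an ellipse centred at $z_0$ with semi-axes $|P|+|Q|$ and $\bigl||P|-|Q|\bigr|$ and major axis along $e^{i(\arg P+\arg Q)/2}$. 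Applied to $G_a(t)$, since $\arg(2p)+\arg q=\arg p+\arg q$, this is an ellipse concentric and axis-aligned with the Steiner pair, with semi-axes $2|p|+|q|$ and $\bigl|2|p|-|q|\bigr|$; comparing with the circumellipse's $|p|+|q|$ and $\bigl||p|-|q|\bigr|$ shows the aspect ratios agree only if $pq=0$, hence differ throughout the (non-circular) family — this is (i). Applied to $O_a(t)$, replacing $p$ by $-p$ leaves the semi-axes unchanged but adds $\pi/2$ to $(\arg P+\arg Q)/2$, so $O_a$'s ellipse is exactly the $90^\circ$ rotation of $G_a$'s about $z_0$ — this is (ii). Finally, the cyclic substitution sends $G_a(t)$ to $G_a(t+2\pi/3)$ and $G_a(t+4\pi/3)$, and likewise for the $O$'s, so the three flank barycenters (resp. the three hexagon centroids) sweep out a single ellipse over the family, as asserted.

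I expect the only delicate point to be the orientation bookkeeping in the middle step: one must fix a single convention for ``outward'' hexagon erection and for the flank triangle that is compatible with the orientation in which $A(t),B(t),C(t)$ traverse the Steiner circumellipse, and verify the explicit coefficients above — the sign $-2p$ versus $2p$ is precisely what produces the $90^\circ$ rotation, so it has to come out right. (The opposite orientation merely conjugates $\zeta\leftrightarrow\bar\zeta$, interchanging the roles of the $e^{it}$ and $e^{-it}$ coefficients, and leaves all conclusions intact.) A minor caveat: for the measure-zero set of families with $2|p|=|q|$ the ``ellipse'' in (i) degenerates to a segment, and the statement should be read accordingly.
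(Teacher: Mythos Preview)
The paper does not prove this statement: it is labeled an \emph{Observation} and introduced with ``experimentally, we observe''. Your proposal therefore goes beyond the paper by supplying an actual proof, and the argument is correct.

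The complex-coordinate approach is clean: the flank barycenter and hexagon centroid are fixed complex-affine combinations of $A,B,C$; substituting the parametrization $A(t)=z_0+pe^{it}+qe^{-it}$, $B(t)=z_0+p\zeta e^{it}+q\bar\zeta e^{-it}$, $C(t)=z_0+p\bar\zeta e^{it}+q\zeta e^{-it}$ collapses (using $\zeta\bar\zeta=1$, $\zeta^2=\bar\zeta$, $\zeta+\bar\zeta=-1$) to $G_a(t)=z_0+2pe^{it}+qe^{-it}$ and $O_a(t)=z_0-2pe^{it}+qe^{-it}$, from which concentricity, axis-alignment, distinct aspect ratio, and the $90^\circ$ relation all read off via the standard description of $\{z_0+Pe^{it}+Qe^{-it}\}$. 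The cyclic-shift remark showing that all three barycenters (resp.\ hexagon centroids) sweep one and the same ellipse is likewise correct.

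The orientation caveat you flag is real but benign: every triangle in a fixed homothetic family has the same orientation (governed by whether $|p|\lessgtr|q|$), so a single convention applies throughout, and the conjugate convention simply yields $(p,2q)$ in place of $(2p,q)$ with identical conclusions. One small over-caution: with the orientation-consistent choice the degeneration you mention cannot occur, since $|p|>|q|$ already forces $2|p|>|q|$ (and symmetrically for the other orientation); so the final caveat about a segment is unnecessary.
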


\subsection*{Brocard porism phenomena}

Referring to \cref{fig:ponc-broc}, consider our basic construction such that the reference triangle $\T$ is one in the Brocard porism.

Let $\A=S/2$ denote the area of a reference triangle. Rewrite the expression for $s^2$ in  \cref{thm:focal} as $s^2=(3/8)\left(\cot{\omega}-\sqrt{3}\right)\A$. Referring to the last column of \cref{tab:homot-broc}, note the Brocard porism conserves $\omega$ (though not area), therefore:

\begin{corollary}
In a (dynamic) grid controlled by triangles $\T$ in the Brocard porism, the focal equilateral rotates about a fixed centroid $X_{16}$. Its area is variable and proportional to the area of $\T$.
\end{corollary}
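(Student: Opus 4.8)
The plan is to obtain this as a direct specialization of \cref{thm:focal} to the Brocard family, using only the invariants recorded in \cref{tab:homot-broc}. First I would recall that, by \cref{thm:focal}, the focal equilateral $f_a f_b f_c$ always has its centroid at the point $X_{16}$ common to $\T$ and every flank in the grid. In the Brocard porism $X_{16}$ is one of the stationary triangle centers — it appears in the ``Stationary'' column of \cref{tab:homot-broc}, see \cite{reznik2020-similarityII} — so as $\T$ ranges over the porism the centroid of the focal equilateral does not move. An equilateral triangle is pinned down by its centroid, its size, and its orientation; here only the centroid is fixed, so the focal equilateral can merely scale and turn about $X_{16}$, which is the asserted ``rotation about a fixed centroid.''

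Next I would pin down the size. Writing the side formula of \cref{thm:focal} in the form $s^2=(3/8)\bigl(\cot\omega-\sqrt{3}\bigr)\A$ with $\A=S/2$ the area of $\T$, the area of the focal equilateral is $(\sqrt{3}/4)\,s^2=(3\sqrt{3}/32)\bigl(\cot\omega-\sqrt{3}\bigr)\A$. The Brocard porism conserves the Brocard angle $\omega$ (again \cref{tab:homot-broc}), so the factor $(3\sqrt{3}/32)(\cot\omega-\sqrt{3})$ is constant along the family, and hence the area of the focal equilateral is a fixed multiple of the area of $\T$. Since the porism does not conserve area — only $\sum s_i^2/A$ and $\omega$ are invariant there — this multiple is applied to a genuinely varying quantity, so the area of the focal equilateral is variable; this finishes the statement.

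I do not anticipate any real obstacle: the argument is essentially bookkeeping with \cref{thm:focal} and \cref{tab:homot-broc}. The only two ingredients that are not mere algebra — that $X_{16}$ is stationary over the Brocard porism, and that the area of $\T$ is non-constant along it — are both established in \cite{reznik2020-similarityII} and summarized in \cref{tab:homot-broc}, so I would simply cite them; if one wanted to be more self-contained, the stationarity of $X_{16}$ follows from the fixedness of the circumcircle and of the Brocard inellipse (whose foci are the stationary Brocard points). The one wording nuance to handle carefully is ``rotates'': with the centroid fixed and the size varying, the equilateral in general both scales and re-orients about $X_{16}$, and I would phrase the conclusion accordingly rather than claim a pure rotation.
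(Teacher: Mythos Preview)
Your proposal is correct and follows essentially the same approach as the paper: rewrite the side formula of \cref{thm:focal} as $s^2=(3/8)(\cot\omega-\sqrt{3})\A$, then invoke from \cref{tab:homot-broc} that the Brocard porism conserves $\omega$ but not area, and that $X_{16}$ is stationary. If anything, you are slightly more explicit than the paper in spelling out the ``fixed centroid'' part and in flagging the wording nuance around ``rotates.''
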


\begin{figure}
    \centering
    \includegraphics[width=.8\textwidth]{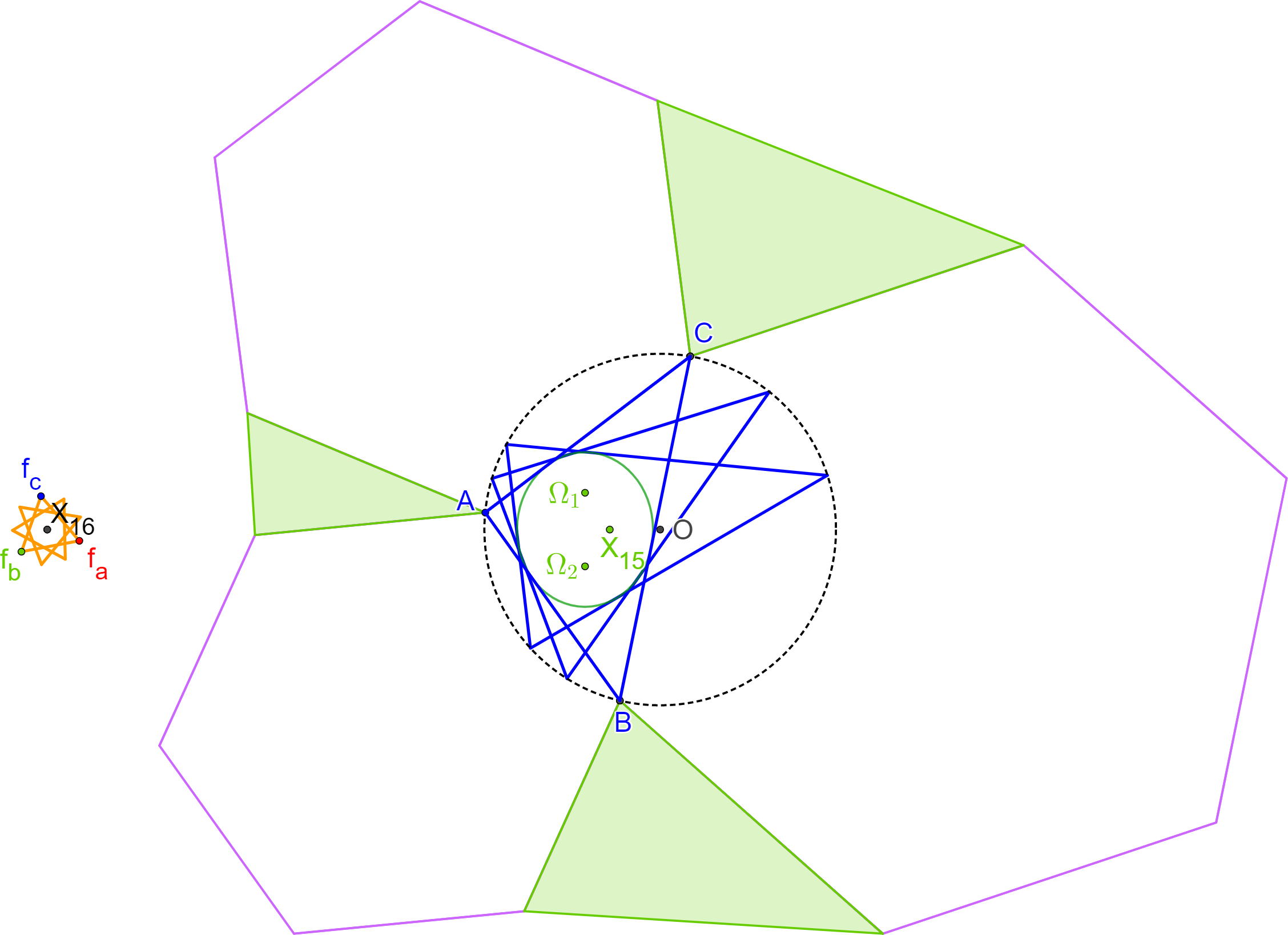}
    \caption{The isodynamic points $X_{15}$ and $X_{16}$ of Brocard porism triangles (blue) are stationary isodynamic. This entails that over the porism, flank triangle (green) $X_{16}$'s will also be stationary. Also shown are corresponding focal equilaterals, whose area is variable and proportional to the area of porism triangles.}
    \label{fig:ponc-broc}
\end{figure}

Since $X_6$ is stationary over the Brocard porism, recalling \cref{prop:x6}:

\begin{corollary}
Over the Brocard porism, the triangle whose vertices are the $X_{15}$ is perspective with $\T$ at a fixed point ($X_6$).
\end{corollary}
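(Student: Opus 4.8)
The plan is to obtain this as an immediate consequence of \cref{prop:x6} combined with the list of stationary centers of the Brocard porism recorded in \cref{tab:homot-broc}. By \cref{prop:x6}, for \emph{every} reference triangle $\T$ the triangle whose vertices are the first isodynamic points $X_{15,a},X_{15,b},X_{15,c}$ of the three hexagonal flanks is perspective with $\T$, the perspector being the symmedian point $X_6(\T)$. This is a pointwise statement — it holds for each individual triangle — so the perspectivity itself requires nothing further; what must be checked is only that the perspector stays put as $\T$ varies within the porism.

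First I would recall, from \cref{tab:homot-broc} and \cite{reznik2020-similarityII}, that all triangles in the Brocard porism are inscribed in a common circumcircle and circumscribe a common Brocard inellipse, and that this fixed configuration pins down a whole list of triangle centers, including $X_3$, $X_6$, $X_{39}$, the Brocard points $\Omega_1,\Omega_2$, and the isodynamic points $X_{15},X_{16}$; in particular the symmedian point $X_6$ is stationary over the family. Then I would simply combine the two facts: for each $\T$ in the porism the perspector of $\T$ with its flank-$X_{15}$ triangle equals $X_6(\T)$, and $X_6(\T)$ is one and the same point for all such $\T$; hence the perspector is a single fixed point, which is the assertion.

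There is essentially no analytic obstacle here, since everything reduces to citing two results already in place; the only subtlety worth flagging in the write-up is the logical separation between ``perspectivity holds for every member of the family'' (given by \cref{prop:x6}) and ``the perspector is the same point for all members'' (which is precisely where the special geometry of the Brocard porism, through the stationarity of $X_6$, is used). Should a self-contained verification be desired, one could instead fix a coordinate frame, use a rational parametrization of the Brocard porism, and check that the coordinates of $X_6$ are constant along it; but invoking the stationarity already established in \cite{reznik2020-similarityII} is shorter and entirely sufficient.
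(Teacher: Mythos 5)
Your proposal is correct and matches the paper's own reasoning exactly: the corollary is obtained by combining \cref{prop:x6} (the flank-$X_{15}$ triangle is perspective with $\T$ at $X_6$ of $\T$) with the stationarity of $X_6$ over the Brocard porism recorded in \cref{tab:homot-broc} and \cite{reznik2020-similarityII}. No gaps; your explicit separation of ``perspectivity for each member'' from ``constancy of the perspector'' is a fine clarification of the same argument.
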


\section{Videos \& Questions}
\label{sec:videos}
Animations illustrating some constructions herein are listed on Table~\ref{tab:playlist}.

\begin{table}[H]
\small
\begin{tabular}{|c|l|l|}
\hline
id & Title & \textbf{youtu.be/<.>}\\
\hline
01 & {Basic construction of hexagonal flanks} &
\href{https://youtu.be/e3MkijszDEA}{\texttt{e3MkijszDEA}}\\
02 & \makecell[lc]{Circular Loci of $X_k$, $k=13,14,15,16$\\over the homothetic family} & \href{https://youtu.be/ZwTfwaJJitE}{\texttt{ZwTfwaJJitE}} \\
\hline
\end{tabular}
\caption{Videos of some constructions. The last column is clickable and provides the YouTube code.}
\label{tab:playlist}
\end{table}

\subsection*{Open Questions}

\begin{itemize}
    \item What dynamic properties underlie the fact that certain sequence of hexagonal vertices are spanned by parabolas? 
    \item Does the sequence of hexagons and flank triangles tend to regular shapes away from the foci of the parabolas?
    \item What are new or different properties of both the basic and grid constructions if hexagons are erected inwardly upon each side of $T=ABC$?
    \item Depending on the amount of self-intersection (\cref{fig:self-inter}) for one or more triangles in the grid, a certain condition  is crossed such that the second isodynamic point wanders away from its fixed common locations. What is that condition? Would using $X_{15}$ be correct?
    \item Is there an $N$ other than $3,4,6$ such that interesting properties of similar constructions can be found?
    \item What happens if self-intersected hexagons are erected, e.g., with vertices common with a simple regular hexagon?
\end{itemize}

\section*{Acknowledgements}
\noindent We thank Arseniy Akopyan for contributing crucial insights and discovering the web of parabolas in the grid. Darij Grinberg lend us an attentive ear and proved the stationarity of $X_{16}$. Clark Kimberling was kind enough to include our early observations on the Encyclopedia of Triangle Centers. 

\appendix
\section{Explicit Formulas}
\label{app:eqns}
To facilitate computational reproduction of our results, we provide code-friendly expressions for a few objects. In the expressions below $a,b,c$ refer to sidelengths and $x,y,z$ are barycentric coordinates.

\subsection{A-parabola} It is given implicitly by:

{\scriptsize
\begin{verbatim}
2*(a^8-4*a^6*b^2+6*a^4*b^4-4*a^2*b^6+b^8-4*a^6*c^2+13*a^4*b^2*c^2-14*a^2*b^4*c^2+5*b^6*c^2+
6*a^4*c^4-23*a^2*b^2*c^4+15*b^4*c^4-4*a^2*c^6+14*b^2*c^6+c^8)*x*y-3*c^2*(2*a^6-8*a^4*b^2+
10*a^2*b^4-4*b^6-5*a^4*c^2+18*a^2*b^2*c^2-11*b^4*c^2+4*a^2*c^4-8*b^2*c^4-c^6)*y^2+2*(a^8-
4*a^6*b^2+6*a^4*b^4-4*a^2*b^6+b^8-4*a^6*c^2+13*a^4*b^2*c^2-23*a^2*b^4*c^2+14*b^6*c^2+
6*a^4*c^4-14*a^2*b^2*c^4+15*b^4*c^4-4*a^2*c^6+5*b^2*c^6+c^8)*x*z+2*(4*a^8-13*a^6*b^2+
15*a^4*b^4-7*a^2*b^6+b^8-13*a^6*c^2+31*a^4*b^2*c^2-35*a^2*b^4*c^2+17*b^6*c^2+15*a^4*c^4-
35*a^2*b^2*c^4+36*b^4*c^4-7*a^2*c^6+17*b^2*c^6+c^8)*y*z-3*b^2*(2*a^6-5*a^4*b^2+4*a^2*b^4-
b^6-8*a^4*c^2+18*a^2*b^2*c^2-8*b^4*c^2+10*a^2*c^4-11*b^2*c^4-4*c^6)*z^2+2*sqrt(3)*S*(2*(a^6-
3*a^4*b^2+3*a^2*b^4-b^6-5*a^4*c^2+9*a^2*b^2*c^2-4*b^4*c^2+7*a^2*c^4-7*b^2*c^4-3*c^6)*x*y+
(a^2-b^2-c^2)*(2*a^4-4*a^2*b^2+2*b^4-10*a^2*c^2+8*b^2*c^2+5*c^4)*y^2+2*(a^6-5*a^4*b^2+
7*a^2*b^4-3*b^6-3*a^4*c^2+9*a^2*b^2*c^2-7*b^4*c^2+3*a^2*c^4-4*b^2*c^4-c^6)*x*z+2*(2*a^6-
7*a^4*b^2+8*a^2*b^4-3*b^6-7*a^4*c^2+17*a^2*b^2*c^2-12*b^4*c^2+8*a^2*c^4-12*b^2*c^4-3*c^6)*y*z+
(a^2-b^2-c^2)*(2*a^4-10*a^2*b^2+5*b^4-4*a^2*c^2+8*b^2*c^2+2*c^4)*z^2)=0
\end{verbatim}
}

The $B$- and $C$-parabolas can be obtained by cyclic permutations, i.e., $(a,b,c)\to(b,c,a)$, and $(a,b,c)\to(c,a,b)$, respectively.

\subsection{The two ``skip-1'' parabolas}

The skip-1 parabola through $B$ is given by:

{\scriptsize
\begin{verbatim}
(sqrt(3)*(2*a^6-5*a^4*b^2+4*a^2*b^4-b^6-7*a^2*b^2*c^2+3*b^4*c^2-3*a^2*c^4+3*b^2*c^4+c^6)+
6*(6*a^4-5*a^2*b^2+b^4-4*a^2*c^2+2*b^2*c^2+c^4)*S)*x^2+(sqrt(3)*(3*a^6-7*a^4*b^2+5*a^2*b^4-
b^6+6*a^4*c^2-10*a^2*b^2*c^2+2*b^4*c^2-3*a^2*c^4+5*b^2*c^4)+2*(13*a^4-8*a^2*b^2+b^4-
17*a^2*c^2+7*b^2*c^2+4*c^4)*S)*x*y+(sqrt(3)*(4*a^6-14*a^4*b^2+13*a^2*b^4-3*b^6-4*a^4*c^2-
13*a^2*b^2*c^2+4*b^4*c^2-a^2*c^4+4*b^2*c^4+c^6)+2*(22*a^4-14*a^2*b^2+b^4-8*a^2*c^2+
7*b^2*c^2+c^4)*S)*x*z+(sqrt(3)*(5*a^6-7*a^4*b^2+2*a^2*b^4+6*a^4*c^2+3*a^2*b^2*c^2-2*b^4*c^2-
6*a^2*c^4+b^2*c^4+c^6)-2*(11*a^4-10*a^2*b^2+2*b^4-a^2*c^2+2*b^2*c^2-c^4)*S)*y*z+
(sqrt(3)*(a^6-15*a^4*b^2+9*a^2*b^4-b^6-2*a^4*c^2-3*a^2*b^2*c^2-b^4*c^2+a^2*c^4+2*b^2*c^4)+
6*(3*a^4+2*a^2*b^2-b^4-a^2*c^2)*S)*z^2 = 0
\end{verbatim}
}

The skip-1 parabola through $C$ is obtained with a bicentric substitution, i.e., $(a,b,c,x,y,z)\to(a,c,b,x,z,y)$.

\subsection{Center (at infinity) of the A-parabola group} The $A$-parabola and $BC$ skip-1 pair of parabolas have parallel axes through $f_a,f_b,f_c$, therefore their axes will cross the line at infinity at the same point given by the following barycentrics;

{\scriptsize
\begin{verbatim}
x=8*a^6-8*a^4*b^2+a^2*b^4-b^6-8*a^4*c^2+6*a^2*b^2*c^2+b^4*c^2+a^2*c^4+b^2*c^4-c^6+
2*sqrt(3)*(b^2-c^2)^2*S

y=-4*a^6+a^4*b^2+a^2*b^4+2*b^6+7*a^4*c^2-3*a^2*b^2*c^2-5*b^4*c^2-2*a^2*c^4+4*b^2*c^4-
c^6-2*sqrt(3)*(a^2-c^2)*(b^2-c^2)*S

z=-4*a^6+7*a^4*b^2-2*a^2*b^4-b^6+a^4*c^2-3*a^2*b^2*c^2+4*b^4*c^2+a^2*c^4-5*b^2*c^4+
2*c^6+2*sqrt(3)*(a^2-b^2)*(b^2-c^2)*S
\end{verbatim}
}

\subsection{Directrix of the A-parabola} Is is the line given by:
{\scriptsize
\begin{verbatim}
(a^4-8*a^2*b^2+12*b^4-8*a^2*c^2+27*b^2*c^2+12*c^4-2*sqrt(3)*(2*a^2-5*b^2-5*c^2)*S)*x+
(6*a^4-23*a^2*b^2+22*b^4-30*a^2*c^2+58*b^2*c^2+39*c^4+2*sqrt(3)*(a^2-3*b^2-2*c^2)*S)*y+
(6*a^4-30*a^2*b^2+39*b^4-23*a^2*c^2+58*b^2*c^2+22*c^4+2*sqrt(3)*(a^2-2*b^2-3*c^2)*S)*z = 0
\end{verbatim}
}

The other two directrices can be obtained via cyclic substitution.

\subsection{Directrix equilateral}

The barycentrics $x,y,z$ of the $A$-vertex of the directrix equilateral (\cref{prop:dir-equi}) are given by:
{\scriptsize
\begin{verbatim}
x =-sqrt(3)*(12*a^6-13*a^4*b^2-a^2*b^4+2*b^6-13*a^4*c^2-8*a^2*b^2*c^2-2*b^4*c^2-a^2*c^4-
2*b^2*c^4+2*c^6)-6*(3*a^2*b^2+3*a^2*c^2+2*b^2*c^2)*S;

y = sqrt(3)*(5*a^6+2*a^4*b^2-10*a^2*b^4+3*b^6-7*a^4*c^2-10*a^2*b^2*c^2-a^2*c^4-6*b^2*c^4+
3*c^6)-6*(a^4-5*a^2*b^2+b^4-3*b^2*c^2-c^4)*S;

z = sqrt(3)*(5*a^6-7*a^4*b^2-a^2*b^4+3*b^6+2*a^4*c^2-10*a^2*b^2*c^2-6*b^4*c^2-10*a^2*c^4+
3*c^6)-6*(a^4-b^4-5*a^2*c^2-3*b^2*c^2+c^4)*S.
\end{verbatim}
}

\bibliographystyle{maa}
\bibliography{references,refs_pub,refs_sub}

\end{document}